\numberwithin{equation}{section}
\theoremstyle{plain}
\newtheorem{theorem}{Theorem}
\newtheorem{proposition}{Proposition}
\newtheorem{lemma}{Lemma}
\theoremstyle{definition}
\newenvironment{remark}{\pushQED{\qed}\remarkbase}{\popQED\endremarkbase} 
\newtheorem{observation}{Observation}
\newtheoremstyle{hypstyle}{}{}{}{}{\bfseries}{.}{ }%
{\thmname{#1}\thmnumber{ (H#2)}\thmnote{}}
\theoremstyle{hypstyle}
\newenvironment{hyp}{\pushQED{\qed}\hypbase}{\popQED\endhypbase}
\newcommand{\N}{{\mathbb N}}
\newcommand{\R}{{\mathbb R}}
\newcommand{\C}{{\mathbb C}}
\newcommand{\CC}{\mathbb C}
\newcommand{\mA}{\mathcal{A}}
\newcommand{\mC}{\mathcal{C}}
\newcommand{\mD}{\mathcal{D}}
\newcommand{\mE}{\mathcal{E}}
\newcommand{\mF}{\mathcal{F}}
\newcommand{\mH}{\mathcal{H}}
\newcommand{\mL}{\mathcal{L}}
\newcommand{\mW}{\mathcal{W}}
\newcommand{\mS}{\mathcal{S}}
\renewcommand{\a}{\alpha}
\renewcommand{\b}{\beta}
\newcommand{\g}{\gamma}
\renewcommand{\d}{\delta}
\newcommand{\D}{\Delta}
\newcommand{\e}{\varepsilon}
\newcommand{\ph}{\varphi}
\newcommand{\lm}{\lambda}
\newcommand{\m}{\mu}
\newcommand{\Om}{\Omega}
\newcommand{\p}{\pi}
\newcommand{\s}{\sigma}
\renewcommand{\t}{\tau}
\renewcommand{\th}{\vartheta}
\newcommand{\Th}{\Theta}
\renewcommand{\Re}{\mathrm{Re}\,}
\renewcommand{\Im}{\mathrm{Im}\,}
\newcommand{\gr}{\nabla}
\newcommand{\intp}{\int_{0}^{2\p}} 
\newcommand{\vp}{\varpi}
\newcommand{\br}{\boldsymbol{r}}
\newcommand{\tchi}{\widetilde{\chi}}
\title{Steady periodic water waves under nonlinear elastic membranes}
\author{Pietro Baldi and John F. Toland}
\date{}
\begin{document}

\maketitle

\begin{abstract}
\noindent  This is a study of  two-dimensional steady periodic travelling waves on the surface of an infinitely deep irrotational ocean, when the top streamline is in contact with a membrane which has a nonlinear response to stretching and bending, and the pressure in the air above is constant. It is not supposed that the waves have small amplitude. The problem of existence of such waves is addressed using methods from the calculus of variations. The analysis involves 
the Hilbert transform and a Riemann-Hilbert formulation.
\end{abstract}

\begin{small}
\emph{Keywords:} hydrodynamics, nonlinear elasticity, free boundary problems, travelling waves, variational methods, Hilbert transform, Riemann-Hilbert problems.

\emph{2000 Mathematics Subject Classification:} 
35R35, 74B20, 74F10, 35Q15, 76M30.  
\end{small}


{\footnotesize\tableofcontents}

\section{Introduction}

Regarding water as an inviscid incompressible liquid, we study two-dimensional steady waves on the surface of an
 ocean of infinite depth,  moving  under the influence of gravity when the surface is in contact with a thin frictionless elastic membrane that responds nonlinearly to bending, compression and stretching, and above the membrane there is constant atmospheric pressure. We suppose that the steady fluid motion is irrotational and the top streamline is a space-periodic curve that travels with constant velocity, without changing its shape. 
We suppose also that the two-dimensional cross-section of the elastic surface behaves mechanically like a thin (unshearable) hyperelastic Cosserat rod, as  described by Antman in \cite{Antman}, Ch.\, 4. 
The physical significance of such a problem is evident; 
for example in the theory of very large floating structures  or platforms  (see \cite{Andrianov-PhD} and the references therein), or, possibly, flow under ice. 
We refer to this as a hydroelastic travelling wave problem.

\medskip

The mathematical study of these waves began with the linear theory of Greenhill in the nineteenth century \cite{Greenhill}, but
an analysis of nonlinear models has only recently been attempted.
 In \cite{Toland-steady},  the existence question was formulated as a variational problem, and existence was proved, for the case of
a class of membranes that have an infinite elastic energy when the stretching or bending exceed certain fixed values, by maximising a Lagrangian over a set of admissible functions.
 Other recent work extend the theory of \cite{Toland-steady} in different ways. For example,  in \cite{Toland-heavy}  membranes with positive densities are  included in the theory provided the resulting variational problem is convex. This is a restriction on the membrane density and on one of the wave-speed parameters in the problem. In \cite{Plotnikov-Toland-nonconvex}, that restriction was removed and  the general problem of membranes with positive mass was studied, using Young's measures to deal with the problem of non-convexity.

\medskip

In the present paper we  generalize and simplify the theory of surface membranes with zero density \cite{Toland-steady}, by proving the existence of steady periodic hydroelastic waves for membranes when the stored elastic energy remains finite but has power-law growth as the bending or stretching/compression  increases, as is  more or less standard in the
mathematical theory of nonlinear elasticity.
A further novelty  is the use of a Riemann-Hilbert formulation in the context of hydroelastic waves. This approach simplifies and clarifies the reduction of the problem to one for a single function of a single real variable.

\medskip

In the rest of this Introduction, we describe the physical problem,  summarize the main results and the methods, and discuss the hypotheses on the elastic properties of the membranes under which they are obtained.

\subsection{The physical problem} \label{physical}

The physical system under investigation was studied in \cite{Toland-steady}, Sec.\,1.1.
We seek waves that are two-dimensional and steady with prescribed period.
More precisely, we consider waves such that
\begin{itemize}
\item
[($i$)] in  three-dimensional space $(X,Y,Z)$, with gravity $g$ acting in the negative $Y$ direction, the flow beneath the free surface is irrotational;
\item
[($ii$)] the $Z$-component of the fluid velocity is everywhere zero and all components tend to zero as $Y \to -\infty$;
\item
[($iii$)] the $Y$-coordinates of points on the surface are independent of $Z$ and the surface moves without change of form and with constant speed $c$ in the $X$-direction;
\item
[($iv$)] the flow is $2\p$-periodic and stationary with respect to axes moving with the wave speed.
\end{itemize}
Because the membrane has zero density, it is equivalent to study, in a frame moving with the wave, steady $2\p$-periodic waves for which the speed of the flow at infinite depth is $-c$  horizontally.
In this frame, the intersection of the surface membrane with the plane $Z=0$, called the membrane section, is supposed to behave like a nonlinear, unshearable,  hyperelastic rod for which the stored energy depends on stretch and curvature.
By the reference membrane is meant the line $Y=0$ and one period of it refers to a line segment of length $2\p$.
We study waves for which
\begin{itemize}
\item
[($v$)] one period of the reference membrane is deformed to become one period of the hydroelastic wave surface.
\end{itemize}

The unknown region occupied by the liquid is characterized by the kinematic requirement that the surface is a streamline and the dynamic condition that the pressure $P$ in the fluid and internal forces  are those required to deform the membrane. Therefore a steady hydroelastic wave with speed $c$ satisfying ($i$-$iv$) corresponds to a non-self-intersecting smooth curve $\mS$ in the plane $(X,Y)$ which is $2\p$-periodic in the horizontal direction $X$ and for which there exists a solution of the following system:
\begin{subequations}  \label{classical problem}
\begin{align}
\label{D psi=0}
\D \psi  = 0 & \quad \text{below } \mS,  \\
\label{psi=0}
\psi = 0 & \quad \text{on } \mS
\  \text{(the kinematic boundary condition),}  \\
\label{psi bottom}
\gr \psi (X,Y) \to (0,c) & \quad \text{as } Y \to - \infty,
\end{align}
with the dynamic boundary condition
\begin{equation}  \label{psi dynamics}
\frac12\, |\gr \psi|^2 + g Y = \frac{c^2}{2} - P
\quad \text{on } \mS.
\end{equation}
Moreover, suppose that $\br$ is the physical deformation that carries a material point $x$ of the reference membrane into its new position $\br(x)$. Then, by assumption, the profile $\{\br(x) : x \in \R \}$ of the deformed membrane must coincide with the free surface $\mS$ of the fluid, and the constraint ($v$) reads
\begin{equation}  \label{r constrain}
\mS \cap \{ 0 \leq X \leq 2\p \}
= \{ \br(x) : x \in [x_0, x_0 + 2\p] \}
\end{equation}
\end{subequations}
for some $x_0 \in \R$.
In \cite{Toland-steady}, Antman's treatment \cite{Antman} of unshearable Cosserat rods
 is used to derive a formula for the pressure $P$ in  \eqref{psi dynamics}.
Here we simply recall that formula after introducing some notation.

\medskip

Consider an interval of membrane in its rest position. Its material points are labelled $x \in [x_1, x_2]$. We consider a deformation $\br$ that move any point $x$ in its new position $\br(x)$.
The stretch of the deformed membrane at the point $\br(x)$ is then
\begin{subequations}  \label{def nu mu}
\begin{equation}  \label{def nu}
\nu(x) := |\br'(x)|,
\end{equation}
where $'$ denotes the derivative with respect to $x$.
Let $\th(x)$ denote the angle formed by the membrane and the positive horizontal semiaxis at the point $\br(x)$, and let
\begin{equation}  \label{def mu}
\mu(x) := \th'(x).
\end{equation}
\end{subequations}
Then the curvature of the membrane at $\br(x)$ is
\[
\hat\s(\br(x)) := \frac{\mu(x)}{\nu(x)}\,.
\]
We  assume  that the material is hyperelastic 
(see, for example, \cite{Antman}, Ch.\,4)
with stored elastic energy function,
\[
E(\nu,\mu) \geq 0,
\quad
\nu > 0, \ \mu \in \R,
\]
of class $C^2$.
Denote by $E_1$ and $E_2$ the partial derivatives of $E$ with respect to its variables, $\nu$ and $\mu$, respectively.
From the balance law for forces and moments acting on the membrane, it follows (see \cite{Toland-steady}, eqs.~(1.1)) that
\begin{subequations}
\begin{gather}  \label{palp}
\nu(x) \, E_1(\nu(x),\mu(x))' + \mu(x) \, E_2(\nu(x),\mu(x))' = 0,\\\intertext{and}
  \label{def P}
P(\br(x)) = \frac{1}{\nu(x)} \,
\Big( \frac{E_2(\nu(x), \mu(x))'}{\nu(x)} \Big)'
- \hat\s(\br(x)) \, E_1(\nu(x), \mu(x)),
\end{gather}
\end{subequations}
where $P(\br)$ is the pressure that is needed to produce the deformation $\br$.
Hence the physical deformation $\br$ of the material points of the reference membrane enters in the hydroelastic wave problem  \eqref{psi dynamics}, through the term $P$.

\begin{remark} At this point in \cite{Toland-steady}, eqn.~(1.6), $\nu(x)$ was  calculated in terms of $\hat\s(\br(x))$, using equation \eqref{palp} and  the constraint $(v)$.
 In the present paper we  avoid that calculation  at this stage. The formula for $\nu$  will emerge later, in the final part of the regularity proof, see \eqref{def vp}.
\end{remark}

Now we introduce the hypotheses on $E$ and explain briefly their roles in the theory.

\subsection{Hypotheses}

 The first seven hypotheses are used in the theory of maximization of the Lagrangian. The next five are needed to
 ensure that maximizers satisfy the Euler-Lagrange equation and are sufficiently regular to give a solution of the physical problem
\eqref{classical problem}.
The first hypothesis  is needed to define the potential energy stored by elasticity in the deformed membrane. The absence of a shear variable (see \cite{Antman}, Ch.\,4) in the argument of $E$  reflects our assumption that the membrane is thin and unshearable.
\begin{hyp}
\label{hyp:exists E}(\emph{Unshearable hyperelasticity})
There exists a stored elastic energy function,
\[
E(\nu,\mu) \geq 0,
\quad
\nu > 0, \ \mu \in \R,
\]
of class $C^2$,
such that the elastic energy in a segment  $[x_1, x_2]$ of material, when deformed by $x \mapsto \br (x)$, is
\begin{equation}  \label{def elastic}
\mE (\br) = \int_{x_1}^{x_2} E(\nu(x), \mu(x)) \, dx,
\end{equation}
where $\nu(x), \mu(x)$ are defined in \eqref{def nu mu}.
\end{hyp}

The following  four hypotheses are used to obtain the existence of a maximizer of the Lagrangian $J_0$, using the direct method of the calculus of variations. First we assume that the material response is even with respect to curvature and then that the elastic energy  is minimized when the material is neither stretched nor bent (we normalize the elastic energy of this rest state to be zero).
\begin{hyp}  \label{hyp:E even mu}
(\emph{Evenness with respect to curvature})\,
$E(\nu,\mu) = E(\nu, - \mu)$\, for all $\nu > 0$, $\mu \in \R$.
\end{hyp}

\begin{hyp}  \label{hyp:E=0}(\emph{Rest state})
$E(\nu,\mu) \geq 0 = E(1,0)$\, for all $\nu>0$, $\mu \in \R$.
\end{hyp}
The next condition ensures the upper semi-continuity of the functional to be maximized. 
However, it has a wider significance.
For example, in the dynamic theory of nonlinear rods
it ensures that the equations of motion  are strictly hyperbolic
and accordingly have rich wave-like behavior. 
It is also an exact analog of the 3-dimensional Strong Ellipticity Condition \cite{morrey}. 
By Lemma \ref{lemma:E* convex} below, (H\ref{hyp:E jointly convex}) coincides with the convexity assumption in \cite{Toland-heavy}.
\begin{hyp}  
\label{hyp:E jointly convex} (\emph{Strict joint convexity})
\[
E_{22} > 0,
\quad
E_{11} > 0,
\quad
E_{11} \, E_{22} - (E_{12})^2 > 0
\]
at all points $(\nu,\mu)$ of the semi-plane $\nu>0$, $\mu \in \R$.
\end{hyp}

It is natural to assume that  for infinite stretch, compression, or  curvature, an infinite amount of energy is required, that is,
$E(\nu,\mu)$ tends to infinity as $\nu$ goes to 0 or $+\infty$, or $|\mu|$ goes to $+\infty$.
The next hypothesis quantifies that assumption.

\begin{hyp}  
\label{hyp:basic growth} (\emph{Growth condition})
\[
E(\nu,\mu) \geq K_0 \Big( \nu^r + \frac{1}{\nu^s}\, + |\mu|^p \Big) - K_0'
\quad
\forall \nu > 0, \ \mu \in \R,
\]
for some positive constants $K_0, K_0'$,
and some exponents $r>2$, $s>0$, $p>1$.
\end{hyp}
With the existence question settled under the above hypotheses we need two further hypotheses to guarantee basic properties of the maximizer. The first ensures that the maximizer is non-trivial (does not correspond to the laminar flow of a wave with zero elevation), see Lemma \ref{lemma:Sigma>0}.

\begin{hyp}  \label{hyp:Sigma}(\emph{Non-trivial maximizers})
$c^2 > g + E_{22}(1,0)$.
\end{hyp}
The second is needed to show that the curve that emerges is non-self-intersecting
(see Lemma \ref{lemma:compactness}).
Obviously, this is essential if it is to be the surface of a travelling wave. However (H\ref{hyp:special}) has a further role. It guarantees sufficient compactness  of a maximizing sequence  to yield the existence of a maximizer.
For $\ell > 1$, let $A(\ell)$ be the area in a circle enclosed between an arc of length $2\p \ell$ and a chord of length $2\p$ (Figure 1).
The asymptotics for $A(\ell)$ is then
\begin{equation*}
\lim_{\ell \to 1} \frac{A(\ell)}{\sqrt{\ell-1}} \to 2\sqrt{2/3}\,\p^2,
\qquad
\lim_{\ell \to +\infty} \frac{A(\ell)}{\ell^2} = \pi,
\end{equation*}
see Lemma \ref{lemma:A}.

\begin{figure}[ht]   \label{fig1}
\includegraphics{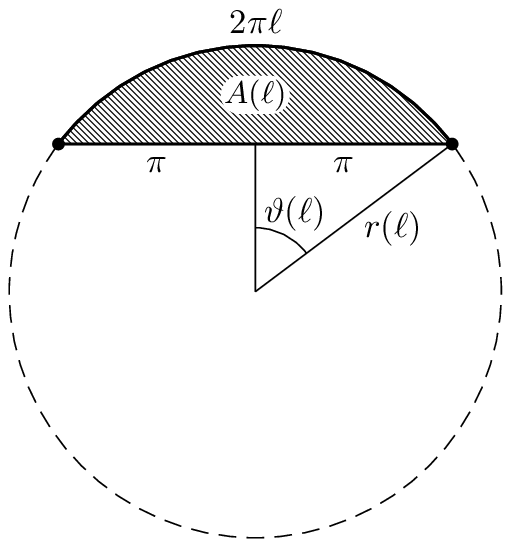}
\hspace{6mm} 
\includegraphics
{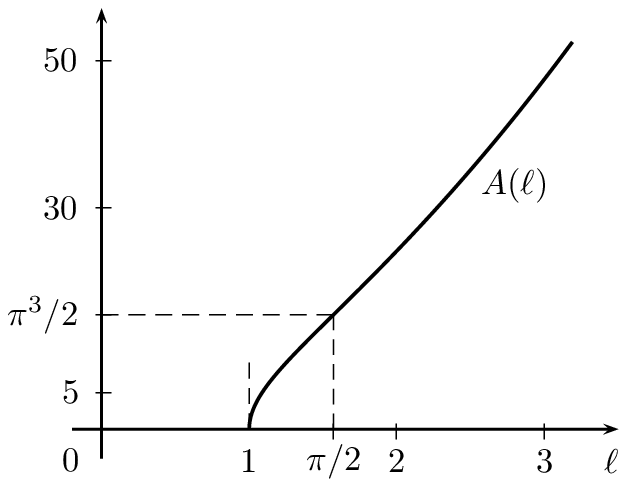}
\caption {Geometric definition and plot of $A(\ell)$}
\end{figure}

\begin{hyp}
\label{hyp:special} (\emph{Non-self-intersecting maximizers})
There exists $\mu^* \in (0,1)$ such that
\[
E(\nu,\mu^*) \geq
\frac{g}{8\p^2 } A(\nu)^2
+\frac{c^2}{4\p}A(\nu)
+\frac g2 \, A(\nu) \sqrt{\nu^2-1}
\quad \forall \nu \geq 1.
\qedhere 
\]
\end{hyp}

\begin{remark}  \label{rem:brutal special}
Note that, by \eqref{A control} below, (H\ref{hyp:special}) is implied by the stronger condition
\[
E(\nu,\mu^*) \geq \frac{g}{2}\, \nu^4 + g \p \, \nu^3 + \frac{c^2}{2}\, \nu^2
\quad \forall \nu \geq 1,
\]
and the exponent $r$ in (H\ref{hyp:basic growth}) must be not less than 4.
\end{remark}

\begin{remark}
(H\ref{hyp:Sigma}) and (H\ref{hyp:special}) 
are simultaneously satisfied if
\begin{equation}  \label{compatibility}
g + E_{22}(1,0) \,<\, c^2 \, \leq \,
\inf_{\nu > 1}
\Big\{ \frac{4\p E(\nu,\mu^*)}{A(\nu)}\, - \frac{g A(\nu)}{2\p}\,
- 2\p g \sqrt{\nu^2-1}\,\Big\}.
\end{equation}
As an example, we consider the  case when $E$ splits, $E(\nu,\mu) = S(\nu) + B(\mu)$, and show that the interval \eqref{compatibility} for $c^2$ is nonempty provided the growth of $S(\nu)$ for large $\nu$ and that of $B(\mu)$ on the interval $\mu \in [0,\mu^*]$ are sufficiently rapid.

We denote $b := B''(0)$. Let us fix $M>0$. If the growth of $S(\nu)$ as $\nu \to +\infty$ is rapid enough, then there exists $\bar\nu > 1$ (depending on $M$ and $b$) such that
\[
\frac{4\p S(\nu)}{A(\nu)}\, - \frac{g A(\nu)}{2\p}\, - 2\p g \sqrt{\nu^2-1}\,
\geq \, g + b + M   \quad \forall \nu \geq \bar{\nu}.
\]
Next, since $A(\nu)$ is increasing,
\[
\frac{4\p B(\mu^*)}{A(\nu)}\, - \frac{g A(\nu)}{2\p}\, - 2\p g \sqrt{\nu^2-1}\,
\geq
\frac{4\p B(\mu^*)}{A(\bar{\nu})}\, - \frac{g A(\bar{\nu})}{2\p}\, - 2\p g \sqrt{ \bar{\nu}^2 - 1 }
\]
for all $\nu \in (1,\bar{\nu}]$. Then
\[
\frac{4\p E(\nu,\mu^*)}{A(\nu)}\, - \frac{g A(\nu)}{2\p}\, - 2\p g \sqrt{\nu^2-1}\,
\geq \, g + b + M
\quad \forall \nu > 1
\]
provided
\begin{equation}  \label{provided example}
B(\mu^*) \geq \frac{g}{8\p^2}\, A(\bar{\nu})^2 + \frac{g}{2}\, A(\bar{\nu}) \sqrt{ \bar{\nu}^2-1 } \,
+ \frac{(g + b + M)}{4\p}\, A(\bar{\nu}).
\end{equation}
\eqref{provided example} holds if $B(\mu^*)$ is sufficiently large, depending on $b$ and $M$.
For example, if $B(\mu)=(b/2) \mu^2 + b_1 \mu^4$, then \eqref{provided example} holds if $b_1$ is sufficiently large.
In that case, \eqref{compatibility} holds for all $c^2$ in the interval
\[
g + b \, \leq \, c^2 \, \leq \, g + b + M. 
\qedhere 
\]
\end{remark}

The remaining five hypotheses are needed to show that  maximizers
of the Lagrangian yield steady hydroelastic travelling waves. The first is an assumption on  $\nabla E$, which will lead to the conclusion that the stretch $\nu$ of the membrane is bounded above (see Lemma \ref{lemma:chi bound below}).
\begin{hyp}  
\label{hyp:uniform infty}(\emph{Bounded stretch})
\[
\lim_{\nu \to +\infty}
\big\{ \inf_{\mu \in \R} \big( \gr E(\nu,\mu) \cdot (\nu,\mu) - E(\nu,\mu) \big) \big\} = +\infty.
\qedhere 
\]
\end{hyp}

\begin{remark}  \label{rem:S+B}
In the ``splitting'' case when $E(\nu,\mu) = S(\nu)+B(\mu)$,
(H\ref{hyp:uniform infty}) is automatically satisfied when 
(H\ref{hyp:E jointly convex},\ref{hyp:basic growth}) hold. 
Indeed,
\[
\big(B'(\mu)\,\mu - B(\mu) \big)' = B''(\mu)\,\mu
\]
has the same sign as $\mu$, therefore
\[
\inf_{\mu \in \R} \big( \gr E(\nu,\mu) \cdot (\nu,\mu) - E(\nu,\mu) \big) =
S'(\nu)\,\nu - S(\nu) - B(0).
\]
Now, $S'(\nu)\,\nu - S(\nu)$ is strictly increasing in $\nu$, thus its limit
as $\nu \to +\infty$ exists.
Suppose that such a limit is a real number. Then
\[
\Big( \frac{S(\nu)}{\nu}\,\Big)' = \frac{S'(\nu)\,\nu - S(\nu)}{\nu^2}\, \to 0
\quad \text{as } \nu \to +\infty.
\]
Hence there exists $\bar\nu$ such that
\[
\Big| \Big( \frac{S(\nu)}{\nu}\,\Big)' \Big| \leq 1
\quad \forall \nu \geq \bar\nu.
\]
It follows that
\[
\frac{S(\nu)}{\nu}\, =
\frac{S(\bar\nu)}{\bar\nu}\, + \int_{\bar\nu}^\nu
\Big( \frac{S(\xi)}{\xi}\,\Big)'\,d\xi
\, \leq \,
C + \nu
\ \quad \forall \nu \geq \bar\nu,
\]
for some constant $C$.
But this violates (H\ref{hyp:basic growth}), because $r>2$.
Hence $S'(\nu)\,\nu - S(\nu)$ goes to $+\infty$ as $\nu \to +\infty$, and
(H\ref{hyp:uniform infty}) follows.
\end{remark}

The next two growth conditions ensure the differentiability of $J_0$ at a maximizer.
\begin{hyp}  
\label{hyp:E1 growth control}
There exist positive constants $K_1, \bar\nu_1$ and $\bar\mu_1$ such that
\[
|E_1(\nu,\mu)| \leq K_1 \Big( \frac{1}{\nu^{s+1}}\, + |\mu|^p \Big)
\quad \forall \nu \leq \bar\nu_1, \ |\mu| \geq \bar\mu_1.
\qedhere
\]
\end{hyp}

\begin{hyp} \label{hyp:E2 growth control}
There exist positive constants $K_2, \bar\nu_2$ and $\bar\mu_2$ such that
\[
|E_2(\nu,\mu)| \leq
K_2 \Big( \frac{1}{\ \nu^{\frac{s(p-1)}{p}}\ } \, + |\mu|^{p-1} \Big)
\quad \forall \nu \leq \bar\nu_2, \ |\mu| \geq \bar\mu_2.
\qedhere
\]
\end{hyp}

\begin{remark} \label{remark:Young}
Recalling Young's inequality
\begin{equation}  \label{Young}
xy \leq \e \, x^q + C_\e \, y^{q'} \quad \forall \, x,y,\e > 0,
\end{equation}  
with $q>1$, $1/q + 1/q' = 1$ and $C_\e := \e^{-1/(q-1)}$, 
we note that (H\ref{hyp:E1 growth control},\ref{hyp:E2 growth control}) are compatible with the presence of ``mixed term'' 
of the type $|\mu|^\a / \nu^\d$ in $E(\nu,\mu)$, 
provided these couplings are not too strong with respect to the leading ``pure terms'' of the form $\nu^r$, $1/\nu^s$ and $|\mu|^p$;
see the example in Subsection \ref{subsec:example}.
\end{remark}

\begin{hyp}  \label{hyp:new magic}
For every $\g \in \R$ there exist positive constants $K_\g$, $K'_\g$, $\bar\nu_\g$, $\bar\mu_\g$ with the following property.
If $(\nu,\mu)$, with $\nu \leq \bar\nu_\g$ or $|\mu|\geq \bar\mu_\g$,
satisfy
\[
E(\nu,\mu) - \gr E(\nu,\mu) \cdot (\nu,\mu) = \g,
\]
then
\[
\frac{K_\g}{\nu^s}\,
\leq  |\mu|^p
\leq \frac{K'_\g}{\nu^s} \,.  
\qedhere
\]
\end{hyp}

\begin{remark}  \label{remark:new magic splitting case}
In the case when $E(\nu,\mu) = S(\nu) + B(\mu)$,
one can show that, if (H\ref{hyp:E jointly convex},\ref{hyp:basic growth},\ref{hyp:E1 growth control},\ref{hyp:E2 growth control}) hold, 
then
\[
-\,\frac{C}{\nu^s}\, \leq \nu S'(\nu) - S(\nu)
\leq -\,\frac{C'}{\nu^s}\,,
\quad
\mu B'(\mu) - B(\mu) \leq C'' |\mu|^p
\]
for all small $\nu$, all large $|\mu|$, for some positive constants $C,C',C''$.
If, in addition,
\begin{equation}  \label{B more than convex}
\mu B'(\mu) - B(\mu) \geq C |\mu|^p
\end{equation}
for all large $|\mu|$, for some $C$, then (H\ref{hyp:new magic}) is satisfied.
We note that \eqref{B more than convex} holds if and only if the ratio $B(\mu)/\mu^\a$ is non-decreasing for all large $\mu$, for some $\a>1$.
Also, if $B$ satisfies
\[
C_0 |\mu|^{p-1} \leq |B'(\mu)| \leq C_1 |\mu|^{p-1}
\]
for all $\mu$ large, for some $C_0, C_1 > 0$ such that $C_1 < p \, C_0$,
then $B(\mu) \leq C + (C_1 / p) |\mu|^p$ for all $\mu$ large, therefore \eqref{B more than convex} holds.

Finally, we note that the simplest case $B(\mu) = |\mu|^p + $ (lower order terms) satisfies \eqref{B more than convex} trivially.
\end{remark}
The final assumption leads to regularity properties of solutions (see Lemma \ref{lemma:bootstrap}).

\begin{hyp}  \label{hyp:E2 >}
There exist positive constants $K_3,\bar\nu_3, \bar\mu_3$ and an exponent $\a := s(p-1)/p - \e$, with $\e>0$, such that
\[
|E_2(\nu,\mu)| \geq K_3 \nu^\a |\mu|^{p-1}
\quad \forall \nu \leq \bar\nu_3, \ |\mu| \geq \bar\mu_3.
\qedhere
\]
\end{hyp}

\begin{remark}
In the splitting case $E(\nu,\mu) = S(\nu) + B(\mu)$, 
(H\ref{hyp:E2 >}) is automatically satisfied when 
(H\ref{hyp:E jointly convex},\ref{hyp:basic growth}) 
hold, because, by the convexity of $B(\mu)$ and its growth condition,
\[
|E_2(\nu,\mu)| = |B'(\mu)| \geq \frac{B(\mu)}{|\mu|}\, \geq C |\mu|^{p-1}
\]
for all $|\mu|$ sufficiently large, uniformly in $\nu$.
\end{remark}

\subsection{Main result and methods}
Under the above hypotheses on the elastic properties of the membrane,  our main result on the existence of   $\mS, \psi$ and $\br$ satisfying the hydroelastic wave problem \eqref{classical problem} is the following:

\begin{theorem}  \label{thm:physical}
\emph{(Existence).}
Suppose that the stored elastic energy function $E(\nu,\mu)$ satisfies \emph{(H\ref{hyp:exists E}-\ref{hyp:E2 >})}.
Then, for admissible velocities $c^2$ in a certain interval, see  \eqref{compatibility},
there exist a free surface curve $\mS$ of class $W^{3,\infty}$
and a membrane  deformation $\br$ of class $W^{2,\infty}$,
satisfying the constraint \eqref{r constrain},
such that the stream function $\psi$ that solves \emph{(\ref{classical problem}a,b,c)}
is also a solution of the dynamic boundary equation \eqref{psi dynamics}.

\emph{(Regularity).}
If $E \in C^k$, with $k \geq 2$, then $\mS$ is of class $W^{k+1,\infty}$, and $\br \in W^{k,\infty}$.\footnote{The regularity of $\psi$ follows by that of $\mS$ by classical theory.}
\end{theorem}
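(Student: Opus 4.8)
The plan is to carry out the whole programme in three stages: (i) reformulate the free-boundary problem \eqref{classical problem} as a variational problem on a fixed reference interval via conformal mapping and a Riemann--Hilbert representation; (ii) produce a maximizer of the resulting Lagrangian $J_0$ by the direct method; and (iii) show that the maximizer is smooth enough and satisfies the Euler--Lagrange system, which is then shown to be equivalent to the original physical problem. The conformal reduction sends the fluid region onto a lower half-plane (or half-strip, after periodicity), so that $\psi$ becomes harmonic with constant boundary data; the surface $\mS$ is encoded by a single $2\p$-periodic real function, say the log-stretch or the tangent angle $\th$, and the harmonic conjugate relation becomes a Hilbert-transform identity. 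Writing the kinetic energy $\frac12|\gr\psi|^2$ on $\mS$ in terms of this single function via the Hilbert transform turns \eqref{psi dynamics}, together with the pressure formula \eqref{def P} and the constraint \eqref{r constrain}, into a single scalar functional equation; one then recognizes its left-hand side as (up to sign) the Euler--Lagrange equation of a Lagrangian $J_0$ built from the elastic energy $\mE$ of Hypothesis~\eqref{hyp:exists E}, the potential energy $\int gY$, and a Bernoulli-type term coming from $\frac{c^2}{2}$ and the kinematic condition.

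Next I would run the direct method on $J_0$ over a suitable admissible class (periodic deformations with prescribed period, $\nu>0$, finite elastic energy). Coercivity and the bounds needed to extract a weakly convergent maximizing sequence come from the growth condition (H\ref{hyp:basic growth}) with $r>2$, $s>0$, $p>1$; in particular the $\nu^r$ and $|\mu|^p$ terms give reflexive-space bounds on $\br'$ and on $\th'$. Weak upper semicontinuity of the elastic term follows from the strict joint convexity (H\ref{hyp:E jointly convex}) (via Lemma~\ref{lemma:E* convex} this matches the convexity used in \cite{Toland-heavy}); the remaining terms are weakly continuous after using compact Sobolev embeddings. The key non-degeneracy facts — that the maximizer is not the trivial laminar flow and that the emerging curve is non-self-intersecting — are exactly Lemma~\ref{lemma:Sigma>0} and Lemma~\ref{lemma:compactness}, which invoke (H\ref{hyp:Sigma}) and (H\ref{hyp:special}) (the latter also supplying the extra compactness through the area function $A(\ell)$ of Lemma~\ref{lemma:A}); the admissible interval for $c^2$ is \eqref{compatibility}. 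This yields a maximizer $\chi$ (equivalently a curve $\mS$ and a deformation $\br$) in the base regularity class.

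The third stage is to show the maximizer satisfies the Euler--Lagrange equation and then to bootstrap its regularity. Differentiability of $J_0$ at the maximizer requires controlling $E_1,E_2$ along the maximizer, which is where (H\ref{hyp:E1 growth control}) and (H\ref{hyp:E2 growth control}) enter, combined with Young's inequality \eqref{Young} as in Remark~\ref{remark:Young}; (H\ref{hyp:uniform infty}) gives the upper bound on the stretch $\nu$ (Lemma~\ref{lemma:chi bound below}), and (H\ref{hyp:new magic}) pins down the two-sided relation $\nu^{-s}\sim|\mu|^p$ along solutions, which is what allows the Euler--Lagrange equation to be read as a usable pointwise constraint. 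From the Euler--Lagrange equation one recovers the stretch formula $\nu=\vp$ (cf.\ \eqref{def vp}) and then runs the bootstrap of Lemma~\ref{lemma:bootstrap}: each application of the Hilbert transform plus the elliptic structure of \eqref{def P} gains derivatives, with (H\ref{hyp:E2 >}) ensuring that the top-order term in the bending equation is genuinely invertible so the scheme does not stall. This gives $\th\in W^{2,\infty}$, hence $\br\in W^{2,\infty}$ and $\mS\in W^{3,\infty}$ when $E\in C^2$, and $k-2$ extra rounds of the same argument give $\br\in W^{k,\infty}$, $\mS\in W^{k+1,\infty}$ when $E\in C^k$; regularity of $\psi$ is then classical elliptic theory. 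I expect the main obstacle to be the regularity bootstrap: the nonlinear, nonlocal coupling of the Hilbert transform with the degenerate-looking pressure operator \eqref{def P} means one must track the precise exponents in (H\ref{hyp:E1 growth control})--(H\ref{hyp:E2 >}) carefully to close each step and, in particular, to show the leading bending term stays uniformly elliptic along the solution.
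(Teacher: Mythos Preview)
Your outline is essentially the paper's own programme and cites the right lemmas and hypotheses at each stage. Two structural points where the paper's execution differs from your sketch: the variational unknown is a \emph{pair} $(w,\chi)\in\mA_0\times\mD$ --- $w$ encoding the shape of $\mS$ via the conformal parametrization and $\chi$ encoding the material deformation $\br=\rho(w)\circ\chi^{-1}$ --- each with its own Euler--Lagrange equation (\eqref{Euler chi} and \eqref{Euler}); and the Riemann--Hilbert argument is deployed at the \emph{end} (Section~\ref{sec:RH}), not in the initial reformulation, to show via Lemma~\ref{lemma:Riemann-Hilbert} that the Euler--Lagrange equation for $w$ collapses to the physical dynamic condition \eqref{dyn final}. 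The compactness step also relies on two tools you do not name: Hurwitz's isoperimetric inequality (Proposition~\ref{ell}), which bounds $I_0(w)$ by a function of the curve length, and Zygmund's exponential-integrability theorem applied to $\log\Om_k=\mC\widetilde\Th_k$, which converts the oscillation bound on $\Th_k$ from Lemma~\ref{lemma:compactness} into the uniform $L^q$ bound \eqref{Om Lq} on $\Om_k$.
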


Our strategy to prove Theorem \ref{thm:physical} is the following.
We approach the free boundary problem by defining a Lagrangian in terms of the kinetic and potential energies, including the elastic energy in the membrane, in one period of a steady wave (Section \ref{sec:Lagrangian}).
We use  conformal  mappings in a variational setting
\cite{Zakharov et al} to overcome the difficulty that the flow domain
is the unknown (Subsection \ref{subsec:mathematical formulation}).
We then use the direct method of the calculus of variations to maximize the Lagrangian (Section \ref{sec:existence}). 
Key ingredients in the existence theory are Hurwitz's analytical version of the classical isoperimetrical inequality
\cite{chern}, 
which we use to control kinetic and gravitational potential energies in terms of the length of a deformed period of the mebrane, 
and Zygmund's theorem \cite{Zyg} 
for exponential of holomorphic functions on the unit disc, to recover both the non-self-intersection property for the wave profile and compactness.
After that we use the growth hypotheses on the stored energy function to deduce an a priori bounds for the maximizer sufficient to infer that
they satisfy the corresponding Euler-Lagrange system (Section \ref{sec:Euler}).
This is a coupled system of equations, for a pair of periodic function
of a real variable which involves the Hilbert transform, that can be reformulated as a Riemann-Hilbert problem \cite{Sha-Tol,Toland-1998}.
Using this observation it is shown that hydroelastic waves arise from maximizers of the Lagrangian (Section \ref{sec:RH}).
The results in the variational formulation are listed in Theorem \ref{thm:all} (Subsection \ref{subsec:the theorem}), from which Theorem \ref{thm:physical} follows (see Lemma \ref{lemma:maths => phys} and 
Proposition \ref{lemma:reg sigma nu (x)}).

\medskip

We remark that this is a global variational theory; in particular, it is not a theory of small-amplitude solutions.
Its successful application to existence questions is restricted to membranes that are resistant to both bending and stretching: this maximization argument cannot be used for Stokes waves or simple surface tension waves.

\subsection{Illustrative Example}  \label{subsec:example}
The following is a simple illustration of our hypotheses. It shows that our result is valid even for stored elastic energy functions $E(\nu,\mu)$ that include a nontrivial ``mixed term'' of the form $|\mu|^\a / \nu^\d$.

Suppose that $E$ is given by
\[
E(\nu,\mu) = \frac{a}{s}\, \frac{1}{\nu^s}\, + \frac{a}{r}\, \nu^r + b |\mu|^p
+ \b \mu^2 + d\,\frac{|\mu|^\a}{\nu^\d} \, - \frac{a(s+r)}{s\,r}\,,
\]
with $a,b,\b,d,s,\d > 0$, $r>1$, $\a \geq 2$, and $p>2$.

The coefficients of $\nu^r$ and $1/\nu^s$ are such that the minimum of $E(\nu,0)$ occur at $\nu=1$. 
The constant term $- a(s+r)/sr$ guarantees that $E(1,0) = 0$. 
Since $\a$ and $p$ are not less than 2, $E(\nu,\mu)$ is of class $C^2$.  As a consequence, (H\ref{hyp:exists E},\ref{hyp:E=0}) are satisfied. 

(H\ref{hyp:E even mu},\ref{hyp:basic growth},\ref{hyp:E2 >}) and the fact that $E_{11}$ and $E_{22}$ are positive everywhere can be immediately verified. 
If
\begin{subequations}  \label{ill ex}
\begin{equation}  \label{ill ex 1}
\a > \d + 1,
\end{equation}
then the mixed term $|\mu|^\a / \nu^\d$ is strictly jointly convex, and (H\ref{hyp:E jointly convex}) follows. 
We also assume \eqref{ill ex 1}
to prove (H\ref{hyp:uniform infty}).

(H\ref{hyp:Sigma}) holds provided
\begin{equation}  \label{ill ex 2}
c^2 > g + 2 \b_0, 
\end{equation}
where $\b_0 := \b$\, if $\a>2$, 
and $\b_0 := \b+d$\, if $\a=2$.

To prove (H\ref{hyp:special}), it is sufficient to assume that 
\begin{equation}  \label{ill ex 3}
r \geq 4, 
\qquad  \frac{a}{r}\, \geq \,\frac g2\, + g\p + \frac{c^2}{2}\,,
\qquad  b > \frac{a(s+r)}{s\,r}\,,
\end{equation}
by Remark \ref{rem:brutal special} and the continuity of $\mu \mapsto b\mu^p$ near $\mu=1$.

Using Young's inequality \eqref{Young} to control the mixed terms, one can see that (H\ref{hyp:E1 growth control},\ref{hyp:E2 growth control},\ref{hyp:new magic}) hold if 
\[
\a (s+1) + \d p \leq sp.
\]
Note that this inequality, when \eqref{ill ex 1} holds, is implyed by the stronger condition
\begin{equation}  \label{ill ex 4}
\a \leq \frac{p(s+1)}{p+s+1}\,.	
\end{equation}
\end{subequations}
Thus, 
(\ref{ill ex}a,b,c,d) imply (H\ref{hyp:exists E}-\ref{hyp:E2 >}), 
with 
\[
g + 2\b_0 < c^2 \leq \frac{2a}{r}\, - g (1+2\p)
\]
as an interval of admissible velocities. 
A necessary condition for this interval to be nonempty is then
\[
\b_0 < \frac{bs}{s+r}\, - g (1+\p).
\]

\section{The Lagrangian}  \label{sec:Lagrangian}
The strategy for   proving this result  is  to maximize the natural Lagrangian of the physical problem and to observe that such a maximizer yields a non trivial solution of \eqref{classical problem} in which $P$ is given by \eqref{def P}. The Lagrangian involves the fluid's kinetic and potential energies, and the elastic energy of the membrane.  As in  \cite{Zakharov et al,Plotnikov-Toland-nonconvex, Toland-steady, Toland-heavy}, to deal with the unknown flow domain, it is convenient to formulate the Lagrangian using conformal mappings. We begin by considering it in its physical context.

The Lagrangian for travelling waves is the difference between kinetic and potential energies in one period, relative to a frame in which the fluid velocity is stationary.
Formally suppose that one period of the wave profile $\mS$ in the moving frame is given by
\[
\mS_{2\p} = \{ (U(\t), V(\t))\,:\, \t \in [0,2\p] \},
\]
where
\[
U(\t + 2\p) = 2\p + U(\t),
\quad
V(\t + 2\p) = V(\t).
\]
Let $\boldsymbol{U}_{2\p}$ denote one period of the steady flow below $\mS_{2\p}$. Then, in terms of the stream function $\psi$, which satisfies (\ref{classical problem}a,b,c), where $c$ is given, the kinetic energy in one period is
\[
K := \frac12 \int_{\boldsymbol{U}_{2\p}} 
| \gr \big( \psi(X,Y) - c Y \big) |^2 \, dY dX,
\]
the gravitational potential energy is
\[
V_g := \frac g2 \intp V(\t)^2 \, U'(\t) \, d\t,
\]
and, by \eqref{def elastic}, the elastic potential energy is
\begin{equation} \label{Ve elastic}
V_e := \intp E\big( |\br'(x)|, \, |\br'(x)| \, \hat\s(\br(x)) \big)\,dx.
\end{equation}
Note that $V_e$ does not depend on the number $x_0$ that appears in \eqref{r constrain}, because $\br'(x)$ and $\hat\s(\br(x))$ are  $2\p$-periodic functions.
For this reason we fix $x_0=0$ in \eqref{r constrain}.

\begin{remark}  $K$ and $V_g$ are determined by any parametrization of the surface, namely by the \emph{shape} of $\mS$ alone, and not by  the displacement of the material points $x \mapsto \br(x)$ of the undeformed membrane  $\mS$.
By contrast, $V_e$ also depends on both the physical deformation $\br$ and on the shape of $\mS$.
\end{remark}

Thus, the Lagrangian of the travelling waves problem is
\[
\mL = K - V_g - V_e.
\]

\begin{remark}  \label{FBP difficulty}
$K$ involves the solution $\psi$ of a Dirichlet problem 
(\ref{classical problem}a,b,c) on a domain which is itself the main unknown in the problem, and $V_g$ and $V_e$ are integrals on its unknown boundary $\mS$. As a consequence, this is not a Lagrangian in the usual sense, since variations in the domain are involved when discussing  critical points.
In this context we mention a paper of Alt \& Caffarelli \cite{Alt-Caffa} in which a  class of variational free-boundary problems that includes the variational principle for $K-V_g$ is discussed.
\end{remark}

\subsection{The mathematical formulation}
\label{subsec:mathematical formulation}
In \cite{Toland-steady}, following the work of \cite{Zakharov et al} on Stokes waves, the difficulty explained in Remark \ref{FBP difficulty} was overcome by regarding one period of the flow domain as a conformal image of the unit disc, the wave surface being the image of the unit circle.
Here we use the same technology, and we refer to \cite{Sha-Tol,Toland-steady}  for the details.

Let $L^p_{2\p}$ denote the usual Lebesgue space of $2\p$-periodic functions on $\R$, which are $p$-power locally integrable, and $W^{k,p}_{2\p}$ the Sobolev space of $2\p$-periodic functions whose $k$th weak derivative lies in $L^p_{2\p}$, $p \in [1, +\infty]$, $k \in \N$.
Let $[v]$ denote the mean on $[0,2\p]$ of $v \in L^1_{2\p}$.

For any $v \in L^1_{2\p}$, its conjugate function (or Hilbert transform) $\mC v$ from harmonic analysis is defined almost everywhere by
\[
\mC v(\xi) = \frac{1}{2\p} \int_{-\p}^\p \frac{ v(s) }{\tan \frac12\,(\xi-s) }\,ds.
\]
For $p \in (1,+\infty)$ and $k \in \N$,  $\mC$ is a bounded linear operator on $L^p_{2\p}$ and $W^{k,p}_{2\p}$ with
$\mC 1 = 0$ and $\mC \big(e^{int}\big)= -i \,\text{sign} \{n\} \,e^{int}$, $n \neq 0$.

Now, we consider deformations $\br$ such that the shape of the deformed membrane, that is the curve $\{ \br(x) : x \in \R\}$, is $2\p$-periodic in the horizontal direction.
According to constraint ($v$) in Section \ref{physical}, we assume that $\br$ deforms the material points $x$ of any interval of length $2\p$ into one period of the deformed membrane.
Following \cite{Sha-Tol,Toland-steady},  introduce a special parametrization
\[
\rho(w)(\t) := (-\t-\mC w(\t), \, w(\t)),
\quad \t \in \R,
\]
for the curve $\{ \br(x) : x \in \R\}$, where $w(\t)$ is a $2\p$-periodic real function representing
 the elevation of the wave, and $\mC w$ is its Hilbert transform.
\begin{remark}
It is shown in \cite{Toland-steady} that when a curve $\mS(w)$ is defined in terms of $w$ as
\begin{equation}  \label{curve parametrized}
\mS(w) := \{ \rho(w)(\t) : \t \in \R\},
\end{equation}
its slope $\Th(w) (\t)$ and curvature $\s(w)(\t)$ at $\rho(w)(\t)$ are  given by
\[
\Th(w)(\t) := - \mC \log \Om(w)(\t),
\quad \s(w)(\t) := \frac{ \Th(w)'(\t)}{\Om(w)(\t)}\,,
\]
where
\[
\Om(w)(\t) := \sqrt{ (1+\mC w'(\t))^2 + w'(\t)^2 \, }\,.
\]
If $\mS(w)$ is non-self intersecting, it is also shown that
\[
K = \frac {c^2}2 \int_0^{2\pi} w \mC w'\, d\tau
\quad \text{and} \quad
V_g = \frac g2 \int_0^{2\pi} w^2(1+\mC w') \, d\tau.
\]
By \cite{Sha-Tol}, Thm.~2.7, any rectifiable $2\pi$-periodic curve $\mS$  in the plane can be represented as $\mS(w)$
for some $w$ with $w'$ and $\mC w'$ in $L^1_{2\pi}$.
We do not make an {\em a priori}   assumption that $w$ is such that the curve $
\mS(w)
$ is the graph of a function. We will prove that this is so {\em a posteriori}, for maximizers of $J_0$ below. The non-self-intersection property of a curve $\mS$, for given $w$, is a key aspect of this problem.
\end{remark}

To find a formula for $V_e$, we consider diffeomorphisms $\chi(\t)$ of the interval $[0,2\p]$ such that
\[
\chi(0)=0,
\quad
\chi(2\p) = 2\p,
\quad
\chi'(\t)>0 \ \ \text{for a.e.} \ \t,
\]
and
\begin{equation}  \label{x=chi(tau)}
x = \chi(\t)
\quad
\forall \t \in \R.
\end{equation}
In this way, when the surface $\mS$ is defined by $w$, as described above, the position $\br(x)$ of the material point $x$ after the deformation is
\[
\br(x) = \rho(w)(\t),
\]
and the stretch of the membrane is
\begin{equation}  \label{coord nu}
\nu(x) = \frac{|\rho(w)'(\t)|}{\chi'(\t)}\,.
\end{equation}
Note that the curvature $\hat\s(\br(x))$ at  $\br(x)$ depends only on the shape, and not on any particular parametrization,
 of the curve. Then, since $\br(x) = \rho(w)(\t)$,
\begin{equation}  \label{coord sigma}
\hat\s(\br(x)) = \s(w)(\t).
\end{equation}
With the change of variable \eqref{x=chi(tau)}, the elastic energy \eqref{Ve elastic} of the deformation $\br(x)$ has the form
\[ 
\mE(w,\chi) = \intp \chi'(\t) \, E\Big( \frac{\Om(w)(\t)}{\chi'(\t)}\,, \,
\frac{\Om(w)(\t)}{\chi'(\t)} \, \s(w)(\t) \Big) \, d\t.
\] 
Thus formally the hydroelastic wave problem is one of finding critical points for the Lagrangian functional
\[
J (w,\chi) := I(w) - \mE(w,\chi),
\]
where
\[
I(w) := \frac{c^2}{2}\, \intp w' \mC w \, d\t
- \frac{g}{2} \, \intp w^2 (1+\mC w') \, d\t ,
\]
$w$ is a real $2\p$-periodic function belonging to the admissible set $\mathcal A_0$   below, and $\chi$ belongs to
\begin{equation}  \label{def mD}
\mD := \big\{ \chi \in W^{1,1}(\R) : 
\ \chi' \in L^1_{2\p}, 
\ \chi' \geq 0 \  \text{a.e.},
\ \chi(0)=0, 
\ \chi(2\p) = 2\p \big\}.
\end{equation}
If we consider $w = a + \widetilde{w}$, with $a \in \R$ and $[\widetilde{w}]=0$, we see immediately that
\[
\max_{a \in \R} J( a + \widetilde{w}, \chi)
\]
is attained at
\begin{equation}  \label{conservation of mass}
a = - \frac{1}{2\p} \intp \widetilde{w} \mC \widetilde{w}' \, d\t,
\end{equation}
and that this value of $a$ is the one for which the area of the region delimited by the profile $\mS(w)$ and the horizontal axis is 0.
In other words, \eqref{conservation of mass} corresponds to a law of conservation of the mass.
Hence, maximizing  $J$ is equivalent to seeking a maximum of
\[
J_0(w,\chi) := I_0(w) - \mE(w,\chi)
\]
(we have dropped the tilde over $w$), where
\[
I_0(w) := I(w) + \frac{g}{ 4 \p} \, \Big( \intp w' \mC w \, d\t \Big)^2,
\]
with the restriction that $[w]=0$.

\subsubsection*{Remark on $I_0(w)$}

For the existence of the integral in the definition of $I_0$, we need at least that  $w \in \mH^{1,1}_\R$, that is $w$ is a $2\p$-periodic, real, absolutely continuous function with derivative $w' \in L^1_{2\p}$ and $\mC w' \in L^1_{2\p}$ also.
For such functions, $\Om(w) \in L^1_{2\p}$. \qed

\subsubsection*{Remark on $\mE(w,\chi)$}
The integrand of the integral $\mE(w,\chi)$ is defined when the curvature $\s(w)$ of the curve $\mS(w)$ is defined, at least for almost every $\t$.
The formula for the curvature is
\[
\s(w) = \frac{ \Theta(w)' }{ \Om(w) }\,
= - \frac{ 1 }{ \Om(w) }\,
\mC \Big( \frac{ \Om(w)' }{ \Om(w) } \Big)
\]
where $\Theta(w) := - \mC \log \Om(w)$.
The Hilbert transform $\mC$ can be applied to the quotient $\Om'/\Om$ provided it is integrable, that is, when $\log \Om(w) \in W^{1,1}_{2\p}$.
Moreover, $\mC(\Om'/\Om)$ is integrable when $\log \Om(w) \in \mH^{1,1}_\R$ (see above).
In that case, $\log \Om(w)$ is absolutely continuous and periodic, and hence there are two positive constants $a,b$ such that
\[
0 < a \leq \Om(w) \leq b \quad \text{for a.e. }\t .
\]
As a consequence, $\s(w) \in L^1_{2\p}$, since it is the product of a bounded  and an integrable function.

In conclusion, the functional $J_0$ is well-defined for $w$ in the set
\[
\mA_0 := \Big\{ w \in \mH^{1,1}_\R \, : \,[w]=0, \  \log \Om(w) \in \mH^{1,1}_\R,
\  [ \log \Om(w) ] = 0 \,\Big\} ,
\]
which is a subset of the Hardy space $\mH^{1,1}_\R$.
The condition $[\log \Om(w) ] = 0$ is related to the complex formulation of the original water waves problem (see \cite{Toland-steady} and the references therein).

Note that $\mE(w,\chi)$ may be infinite for some $(w,\chi) \in \mA_0 \times \mD$. \qed

\subsection{The theorem in the variational setting}
\label{subsec:the theorem}

The variational problem is the one of finding a maximizer of
\begin{equation}  \label{var problem}
\max\limits_{(w,\chi) \in \mA_0 \times \mD} J_0(w,\chi),
\end{equation}
with $J_0 : \mA_0 \times \mD \to \R \cup \{-\infty\}$,
from which all else follows.
In this variational setting, the complete result is the following.

\begin{theorem}  \label{thm:all}
(a) \emph{(Existence of a maximiser).}
If  \emph{(H\ref{hyp:exists E}-\ref{hyp:special})} hold,
there exists a nontrivial maximizer $(w_0, \chi_0)$ of problem \eqref{var problem}.
Moreover,
\[
w_0 \in \mA_0 \cap W^{2,\rho}_{2\p}, \quad
\chi_0 \in \mD \cap W^{1,s+1} (0,2\p),
\]
where and $p$ and $s$ are the exponents in \emph{(H\ref{hyp:basic growth})} and $\rho := \frac{p+ps}{p+s}\,>1$.

\smallskip

(b) \label{thm:chi' bounded below}
If  \emph{(H\ref{hyp:exists E}-\ref{hyp:uniform infty})} hold, then $\chi'_0 \geq C > 0$ a.e., for some constant $C$.

\smallskip

(c)  \label{thm:Euler chi}
\emph{(Euler equation for the deformation variable $\chi_0$).}
If \emph{(H\ref{hyp:exists E}-\ref{hyp:E2 growth control})} hold,
$\chi_0$ is a solution of the Euler equation \eqref{Euler chi}.

\smallskip

(d)  \label{thm:Euler w}
\emph{(Euler equation for the wave elevation $w_0$).}
If \emph{(H\ref{hyp:exists E}-\ref{hyp:new magic})} hold,
then $w_0$ is a solution of the Euler equation \eqref{Euler}.

\smallskip

(e)  \label{thm:regularity}
\emph{(Regularity).}
If \emph{(H\ref{hyp:exists E}-\ref{hyp:E2 >})} hold, and
$E(\nu,\mu)$ is of class $C^k$, with $k \geq 2$, then
\[
w_0 \in W^{k+1,\b}_{2\p}, \quad \chi_0 \in W^{k,\infty}(0,2\p),
\]
\[
\frac{\Om(w_0)}{\chi'_0} \, \in W^{k-1,\infty}_{2\p},
\quad \s(w_0) \in W^{k-1,\infty}_{2\p}.
\]

\smallskip

(f)  \label{thm:dynamics}
\emph{(Dynamic boundary condition).}
If \emph{(H\ref{hyp:exists E}-\ref{hyp:E2 >})} hold, then $(w_0,\chi_0)$ satisfies the dynamic boundary equation \eqref{dyn final}.
\end{theorem}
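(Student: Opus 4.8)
The plan is to prove Theorem~\ref{thm:all} piece by piece, following the same architecture as the proof of Theorem~\ref{thm:physical} outlined in the text, with each hypothesis entering exactly where advertised. Part~(a) is the heart of the matter: one runs the direct method on $J_0$ over $\mA_0 \times \mD$. First I would establish that $J_0$ is bounded above. The elastic term $\mE(w,\chi)\geq 0$ contributes a sign, but the issue is that $I_0(w)$ involves $K$, which is not manifestly controlled; the key is Hurwitz's analytic isoperimetric inequality (cited via \cite{chern}) to bound the kinetic and gravitational energies by the length $\intp \Om(w)\,d\t$ of one period of the profile, which in turn is absorbed by the growth (H\ref{hyp:basic growth}) of $E$ in $\nu$ (with $r>2$ crucial for the quadratic-in-length terms coming from $K$ and $V_g$). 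Then take a maximizing sequence $(w_n,\chi_n)$: the bound on $J_0$ combined with (H\ref{hyp:basic growth}) gives uniform bounds on $\|\Om(w_n)^{1/\chi_n'}\cdot(\dots)\|$ translating, via Young's inequality, into $W^{2,\rho}_{2\p}$ bounds on $w_n$ and $W^{1,s+1}(0,2\p)$ bounds on $\chi_n$, with $\rho=\tfrac{p+ps}{p+s}$. Extract weakly convergent subsequences. Lower/upper semicontinuity of $J_0$ then follows from convexity: $I_0$ is handled by the structure of the Hilbert transform (it is essentially a bounded quadratic form plus a correction), while the $-\mE$ term needs weak lower semicontinuity of $\mE$, which is exactly where (H\ref{hyp:E jointly convex}) (strict joint convexity, equivalently the convexity of $E^*$ by Lemma~\ref{lemma:E* convex}) is used — one writes $\mE(w,\chi)$ as an integral of a convex integrand in $(\chi', \Om(w), \Om(w)\s(w))$ and invokes a standard convexity/weak-lower-semicontinuity theorem of Tonelli type. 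Nontriviality of the maximizer (that $w_0\not\equiv 0$) comes from (H\ref{hyp:Sigma}): compute the second variation of $J_0$ at $w=0$ in a suitable direction and show it is positive when $c^2>g+E_{22}(1,0)$, so the trivial flow is not a maximizer; see Lemma~\ref{lemma:Sigma>0}. Compactness of the maximizing sequence — in particular, that the limit still lies in $\mA_0$ (i.e. $\log\Om(w_0)\in\mH^{1,1}_\R$ with zero mean) and that the profile does not degenerate or self-intersect — uses (H\ref{hyp:special}) together with Zygmund's theorem \cite{Zyg} on exponentials of conjugate functions and the area estimates for $A(\ell)$ in Lemma~\ref{lemma:A}; this is the part I expect to be the main obstacle, since controlling self-intersection requires a delicate geometric estimate relating the enclosed-area defect $A(\nu)$ to the elastic energy, and it is simultaneously the mechanism that upgrades weak convergence to the strong convergence needed to pass to the limit in $\mE$.

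For part~(b), once the maximizer exists, suppose $\operatorname*{ess\,inf}\chi_0'=0$; then along the region where $\chi_0'$ is tiny the argument $\Om(w_0)/\chi_0'$ of $E$ is huge, and (H\ref{hyp:uniform infty}) — the statement that $\gr E(\nu,\mu)\cdot(\nu,\mu)-E(\nu,\mu)\to+\infty$ as $\nu\to\infty$ uniformly in $\mu$ — forces a contradiction with maximality: perturbing $\chi_0$ to increase it slightly on the bad set strictly increases $J_0$. This is Lemma~\ref{lemma:chi bound below}; the precise computation is the derivative of $\chi'\mapsto \chi'E(\Om/\chi',\dots)$, whose sign near $\chi'=0^+$ is governed exactly by the quantity in (H\ref{hyp:uniform infty}). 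For parts~(c) and~(d), with $\chi_0'$ bounded below the functional $J_0$ becomes Gâteaux-differentiable at the maximizer: one differentiates $\mE(w_0,\chi_0+t\eta)$ and $\mE(w_0+tv,\chi_0)$ in $t$, the admissible variations being those keeping $\chi\in\mD$ and $w\in\mA_0$ (zero mean, zero mean of $\log\Om$). Justifying differentiation under the integral requires dominating $E_1$ and $E_2$ along the variation, which is precisely the role of (H\ref{hyp:E1 growth control}) and (H\ref{hyp:E2 growth control}); for the $w$-equation one additionally needs (H\ref{hyp:new magic}) to control the competing $\nu$-small/$\mu$-large regime and pin down the relation $|\mu|^p\sim \nu^{-s}$ along the constraint surface, so that the Euler expression is integrable. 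Setting the first variations to zero yields the Euler equations \eqref{Euler chi} and \eqref{Euler}; the $\chi_0$-equation is an ODE identity (a conservation law of the form \eqref{palp}), and the $w_0$-equation is the coupled Hilbert-transform system.

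Parts~(e) and~(f) are bootstrapping. Starting from $w_0\in W^{2,\rho}$ and the Euler system, one reads the $w_0$-equation as an equation for $\Th(w_0)=-\mC\log\Om(w_0)$ coupled to the elastic quantities; reformulating it as a Riemann-Hilbert problem (\cite{Sha-Tol,Toland-1998}) decouples it into a single scalar equation. Here (H\ref{hyp:E2 >}), the lower bound $|E_2|\geq K_3\nu^\alpha|\mu|^{p-1}$ with $\alpha=s(p-1)/p-\e$, is what lets one invert and gain regularity at each step: it controls the modulus of the nonlinearity from below so that a gain in regularity of the data transfers to a gain for $\Om(w_0)$ and hence for $w_0$ and for $\chi_0'$ (via \eqref{coord nu} and the emergent formula \eqref{def vp} for $\nu$ promised in the Remark after \eqref{def P}). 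Iterating, if $E\in C^k$ then $w_0\in W^{k+1,\b}_{2\p}$, $\chi_0\in W^{k,\infty}$, and $\Om(w_0)/\chi_0', \s(w_0)\in W^{k-1,\infty}_{2\p}$, which is part~(e). Finally, for part~(f), the Euler equation, now known to hold in a strong sense, is algebraically equivalent to the dynamic boundary condition: one substitutes the formula \eqref{def P} for the pressure $P$ — derived from the same $\chi_0$-Euler identity \eqref{palp} — into \eqref{psi dynamics} rewritten in conformal coordinates, and checks the two displays coincide, giving \eqref{dyn final}. The routine-but-lengthy parts are the semicontinuity estimates in~(a) and the bootstrap bookkeeping in~(e); the genuinely delicate point is the simultaneous compactness/non-self-intersection argument in~(a) powered by (H\ref{hyp:special}).
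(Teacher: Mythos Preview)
Your overall architecture is right and you correctly identify almost every ingredient and where it enters. However, there are two genuine structural gaps worth flagging.

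\textbf{Part (a): the order of the a priori bounds is off, and the ``$r>2$'' claim is wrong.} You propose to first bound $J_0$ above by estimating $I_0$ via the isoperimetric inequality in terms of the length $\ell(w)$ and then absorbing that length-polynomial with the growth (H\ref{hyp:basic growth}), saying ``$r>2$ crucial for the quadratic-in-length terms.'' But the isoperimetric bound gives $\int w\mC w' \leq A(\ell)$ with $A(\ell)\sim \ell^2$, so $I_0$ contains a term $\sim A(\ell)^2 \sim \ell^4$; absorbing this by Jensen and $E\geq K_0\nu^r$ would require $r\geq 4$, not $r>2$. The paper does \emph{not} bound $J_0$ above this way. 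Instead, the very first a priori step is Lemma~\ref{lemma:compactness}: on any $(w,\chi)$ with $J_0(w,\chi)>0$, one uses Jensen (via (H\ref{hyp:E jointly convex})) to get $\mE\geq 2\pi E(\ell,m)$ with $m=\frac{1}{2\pi}\int|\Theta'|$, compares with the isoperimetric bound on $I_0$, and invokes (H\ref{hyp:special}) to conclude $m<\mu^*<1$, hence $\|\Theta(w)\|_\infty<\mu^*\pi/2$. \emph{Only then} does Zygmund's exponential-integrability theorem apply to $\log\Om=\mC\widetilde\Theta$ and deliver a uniform $L^q$ bound on $\Om_k$ with $q\in(1,1/\mu^*)$; this is what gives the length bound and starts the H\"older-bootstrap to $\chi'_k\in L^{s+1}$, $\s_k\in L^\rho$, $w_k\in W^{2,\rho}$. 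In your outline (H\ref{hyp:special}) and Zygmund appear only afterwards, as a ``compactness/non-self-intersection'' step, but in fact they are the mechanism that produces \emph{any} bound on the maximising sequence at all.

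\textbf{Parts (e)--(f): Riemann--Hilbert is for the dynamic boundary condition, not for regularity.} Your proposal attributes the bootstrap in (e) to the Riemann--Hilbert reformulation. In the paper the regularity proof (Section~\ref{subsec:regularity}) never uses Riemann--Hilbert; it proceeds directly from the two Euler equations. From \eqref{Euler} one reads $\mC E_{2,0}\in W^{1,1}\subset L^\infty$, whence $E_{2,0}\in L^\beta$ for all $\beta$; (H\ref{hyp:E2 >}) together with (H\ref{hyp:new magic}) then forces $\chi_0'\in L^\beta$ and $\s_0/\chi_0'\in L^\beta$, and one iterates. The key inversion is not Riemann--Hilbert but the implicit function $\vp$ defined by $E^\star_1(\vp(\s),\s)=\g_0$ (this is \eqref{def vp}, and is where \eqref{Euler chi} is rewritten as $\chi_0'/\Om_0=\vp(\s_0)$), together with the strictly increasing map $\psi(y)=E^\star_2(\vp(y),y)$ giving $\s_0=\psi^{-1}(E_{2,0})$. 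The Riemann--Hilbert argument (Lemma~\ref{lemma:Riemann-Hilbert}) appears only in Section~\ref{sec:RH} to pass from the Euler--Lagrange identity \eqref{claim RH} to the pointwise dynamic condition \eqref{dyn final}; the algebra there (the cancellation in \eqref{monster 20}) also relies on \eqref{old Q trick}, which is the differentiated form of \eqref{Euler chi} and is only available once regularity has been established. So your parts (e) and (f) are essentially swapped in their use of tools.

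A smaller omission: in part (d) you do not mention the operator $\mL(u)=\big(w_0'u+(1+\mC w_0')\mC u\big)/\Om_0^2$ and its invertibility on $H^{1,\rho}_0$ (Lemma~\ref{lemma:inv mL}). This is the device that replaces $h'$ by the test function $\varphi=\mL(h')$ and reduces the weak Euler equation to the integrated form \eqref{Euler}; without it one cannot take $\mL(h)$ as a test function directly because $w_0'$ only has $W^{1,\rho}$ regularity (see Remark~\ref{rem:obstacle}).
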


We divide the proof of Theorem \ref{thm:all} into distinct parts, introducing Hypotheses only when needed.

\section{Existence theory}  \label{sec:existence}
In this section we prove part $(a)$ of Theorem \ref{thm:all}.
Before anything else, we make some key technical observations.

\subsection{Three technical observations}

\begin{observation}
Among all the curves of length $2 \p \ell$, $\ell>1$, which intersect the horizontal axis at 0 and $2\pi$, the one that achieves the largest possible distance from the horizontal axis is an isosceles triangle.
Therefore
\begin{equation}\label{linf}
\| w \|_\infty \leq \p \sqrt{\ell(w)^2 -1}
\end{equation}
where
\[
\ell(w) = \frac{L(w)}{2\p} \,
= \frac{1}{2\p}\,\intp \Om(w)(\t)\,d\t.
\]
\end{observation}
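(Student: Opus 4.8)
Proof proposal for the Observation (isoperimetric-type bound $\|w\|_\infty \leq \pi\sqrt{\ell(w)^2-1}$**).**

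The plan is to reduce the stated inequality to an elementary geometric extremal problem. Recall that the curve $\mS(w)$ is parametrized by $\rho(w)(\t) = (-\t - \mC w(\t), w(\t))$, so the $Y$-coordinate of a point on the curve is exactly $w(\t)$, while the arclength element is $|\rho(w)'(\t)|\,d\t = \Om(w)(\t)\,d\t$. Hence the total length of one period of $\mS(w)$ is $L(w) = \intp \Om(w)(\t)\,d\t = 2\pi\,\ell(w)$, and the horizontal span of one period is exactly $2\pi$ (since the first component satisfies $-\t-\mC w(\t)$ and $\mC w$ is $2\pi$-periodic, the endpoints of one period differ by $2\pi$ in the $X$-direction and $0$ in the $Y$-direction). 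So $\mS(w)$ restricted to one period is a rectifiable arc of length $2\pi\ell(w)$ whose two endpoints lie on a common horizontal line a distance $2\pi$ apart, and $\|w\|_\infty$ is the supremum over the arc of the (signed) vertical distance to that line.

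The first step is therefore to establish the purely geometric claim stated in the Observation: \emph{among all rectifiable arcs of fixed length $2\pi\ell$ joining two points at horizontal distance $2\pi$ on a horizontal line, the maximal achievable distance from that line is attained by the isosceles triangle with that base and that perimeter-minus-base as the sum of its two other sides.} To see this, fix any point $Q$ on the arc realizing (or approaching) the maximal height $h$; the arc is split by $Q$ into two subarcs joining $Q$ to the two base endpoints $A$ and $B$. By the triangle inequality for arclength, the length of the subarc from $A$ to $Q$ is at least $|AQ|$, and similarly for $Q$ to $B$, so $2\pi\ell = L \geq |AQ| + |QB|$. Thus $Q$ lies in (the closure of) the region bounded by the ellipse with foci $A,B$ and major-axis sum $2\pi\ell$. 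The vertical extent of that region is maximized at the point directly above the midpoint of $AB$, where the height is $\sqrt{(\pi\ell)^2 - \pi^2} = \pi\sqrt{\ell^2-1}$ (half the base is $\pi$, the sum of focal radii is $2\pi\ell$, so each radius is $\pi\ell$, and the height is by Pythagoras $\sqrt{(\pi\ell)^2-\pi^2}$). This gives $h \leq \pi\sqrt{\ell^2-1}$, and the isosceles triangle with base $2\pi$ and legs summing to $2\pi\ell$ realizes equality, which proves the claim.

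The second step is to transfer this bound to $\|w\|_\infty$. Since $w$ is continuous (indeed $w \in \mH^{1,1}_\R$, so $w$ is absolutely continuous) and periodic with mean zero, its sup and inf are attained; taking $\t^*$ with $|w(\t^*)| = \|w\|_\infty$, the point $\rho(w)(\t^*)$ lies on the arc, and applying the geometric estimate (with $A,B$ the images of the period endpoints, at vertical distance $0$ from each other and horizontal distance $2\pi$) gives $|w(\t^*)| \leq \pi\sqrt{\ell(w)^2-1}$, which is \eqref{linf}. One must also note the trivial case $\ell(w)=1$: then $\Om(w)\equiv 1$ forces $w'\equiv 0$ a.e. (since $\Om(w)^2 = (1+\mC w')^2 + (w')^2 \geq 1$ with equality only when $w'=0$ and $\mC w' \geq 0$, but $[\mC w']=0$ forces $\mC w'\equiv 0$ too), hence $w\equiv 0$ and both sides vanish.

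The main obstacle is really one of \emph{rigor rather than difficulty}: making the ``triangle inequality for arclength'' and the reduction to the confocal ellipse precise for a general rectifiable (not necessarily embedded or smooth) arc, and handling the fact that $\mS(w)$ may a priori self-intersect — but self-intersection is irrelevant here since the argument only uses the length of the two subarcs joining the highest point to the endpoints, and arclength is additive regardless of embeddedness. A secondary subtlety is confirming that the horizontal endpoints of one period are genuinely $2\pi$ apart with equal height, which follows immediately from the periodicity of $\mC w$. Everything else is elementary plane geometry.
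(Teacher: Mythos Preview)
Your argument is correct and in fact more detailed than the paper, which simply states this as an observation without proof. The ellipse/isosceles-triangle reduction via $|AQ|+|QB|\leq L$ is the natural way to make the claim precise, and nothing in it requires the arc to be embedded, as you rightly note.

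One small point to tighten: the geometric estimate bounds the distance of a point on the arc from the line through the two \emph{endpoints} $A,B$, which sit at height $w(\t_0)$ for whatever $\t_0$ you pick as the start of the period. As written, your final step concludes $|w(\t^*)|\leq \pi\sqrt{\ell(w)^2-1}$ directly from ``$A,B$ at vertical distance $0$ from each other'', but that only gives $|w(\t^*)-w(\t_0)|\leq \pi\sqrt{\ell(w)^2-1}$. You clearly have the fix in mind (you invoke ``mean zero''): since $w$ is continuous, periodic, and $[w]=0$, choose $\t_0$ with $w(\t_0)=0$ and take the period $[\t_0,\t_0+2\pi]$; then the endpoint line is $Y=0$ and the conclusion follows. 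Make that choice explicit and the proof is complete.
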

\begin{observation}
This is based on Hurwitz's analytic version (\cite{chern}, page 29) of the classical isoperimetric inequality:
when
$U,\,V:[a,b] \to \R$ are absolutely continuous with
$U(a)=U(b)$ and $V(a)=V(b)$,
then
\[
\Big|\int_a^b U'(x)V(x) dx \Big|\leq \pi R^2,
\quad \text{ where} \quad
R := \frac{1}{2\pi}\int_a^b \sqrt{U'(x)^2 +V'(x)^2} \, dx.
\]
In other words, $|\int_a^b U'(x)V(x) dx|$ is bounded above by the area of the circle of radius $R$,
and equality holds if and only if $\{(U(x),V(x)):x \in [a,b]\}$ is such a circle.

At this point we refer the reader to Figure \ref{fig1}.
For $\ell >1$,
a circle of radius  $r(\ell)$, where
\[
r(\ell) \sin \vartheta(\ell) = \p
\]
and
\begin{equation}  \label{th ell}
\vartheta(\ell) \in (0,\pi), \quad
\ell = \frac{\vartheta(\ell)}{\sin \vartheta(\ell)}\, ,
\end{equation}
is uniquely determined (up to congruence) by the requirement that the end-points of a chord of length $2\p$ and the end-points of a circular arc of length $2\p \ell$ coincide.
Let $ A(\ell)$ be the area enclosed between the circular arc of length $2 \p \ell$ and the chord of length $2\p$.
Then it is easily seen that
\begin{equation}  \label{A}
A(\ell) = \p^2\,
\frac{2\th(\ell) - \sin (2\th(\ell))}{1 - \cos (2\th(\ell))}
\end{equation}
where $\th(\ell)$ is defined in \eqref{th ell}.
For future convenience, we prove some properties of the function $A(\ell)$.

\begin{lemma}  \label{lemma:A}
$A(\ell)$ is strictly increasing, concave on $(1,\p/2)$ and convex on \\$(\p/2,+\infty)$, and
\begin{equation} \label{A'}
A'(\ell) = \frac{2\p^2}{\sin \th(\ell)}\,.
\end{equation}
Therefore $A'(\ell)>2\p^2$ for all $\ell\neq\p/2$.
Moreover,
\begin{equation} \label{A asymptotics}
\lim_{\ell \to 1} \frac{A(\ell)}{\sqrt{\ell-1}} \to 2\sqrt{2/3}\,\p^2,
\qquad
\lim_{\ell \to +\infty} \frac{A(\ell)}{\ell^2} = \pi,
\end{equation}
$A(\ell)/\sqrt{\ell-1}$ is an increasing function of $\ell$, and
\begin{equation}  \label{A control}
A(\ell) \leq 2\p \,\ell^2
\quad \text{for all }\, \ell > 1\,.
\end{equation}
\end{lemma}

\begin{proof}
First of all, we note that the map
$(1,+\infty) \ni \ell \, \mapsto \, \th(\ell) \in (0,\p)$
is strictly increasing. Indeed,
\begin{equation}  \label{th'}
\th'(\ell) =
\frac{\sin^2\th(\ell)}{ \sin \th(\ell) - \th(\ell) \cos \th(\ell) }\,
>0
\end{equation}
because
\[
\sin \th - \th \cos \th > 0 \quad \forall \th \in (0,\p).
\]
Now,
\[
\frac{d}{d\th} \Big( \frac{2\th-\sin(2\th)}{1-\cos(2\th)} \Big)
= \frac{ 2 (\sin\th - \th \cos\th) }{ \sin^3 \th }
\]
so that \eqref{A'} follows by \eqref{th'}.
Hence $A'(\ell)$ is positive for all $\ell$ because $\th(\ell) \in (0,\p)$.
More, since $\th(\ell)$ is strictly increasing in $\ell$ and $\th(\p/2) = \p/2$, formula \eqref{A'} shows that $A'(\ell)> 2\p^2$ for all $\ell \neq \p/2$, and it is decreasing on $(1,\p/2)$ and increasing on $(\p/2,\infty)$.

Now we note that
\[
\frac{d}{d\ell}\, \frac{A(\ell)}{\sqrt{\ell-1}}\,
= \frac{2A'(\ell) (\ell-1) - A(\ell)}{ 2(\ell-1)^{3/2}} \,.
\]
By \eqref{th ell}, \eqref{A} and \eqref{A'}
\[
2A'(\ell) (\ell-1) - A(\ell)
= \frac{\p^2}{\sin^2 \th(\ell)}\,
\big\{ 3\th(\ell) -4\sin\th(\ell) + \sin\th(\ell) \cos\th(\ell) \big\}
\]
and $3\th -4\sin\th + \sin\th \cos\th > 0$ for all $\th > 0$ because its value at $\th=0$ is zero and its derivative is
\[
\frac{d}{d\th}\,( 3\th -4\sin\th + \sin\th \cos\th ) = 2(1-\cos\th)^2
\geq 0.
\]
Thus, $A(\ell)/\sqrt{\ell-1}$ is an increasing function of $\ell$.

The first limit in \eqref{A asymptotics} can be proved by Taylor series, because $\th(\ell) \to 0$ as $\ell \to1$.
By \eqref{A} and \eqref{th ell},
\begin{equation*}  
\frac{A(\ell)}{ \ell^2}\, =
\p^2 \, \frac{ 2\th(\ell) - \sin (2\th(\ell)) }{ 2\th(\ell)^2} \,,
\end{equation*}
and the second limit in \eqref{A asymptotics} follows because $\th(\ell) \to \p$ as $\ell \to \infty$.

To prove \eqref{A control}, we differentiate
\[
\frac{d}{d\ell}\, \frac{A(\ell)}{\ell^2}\, =
\frac{A'(\ell) \ell - 2 A(\ell)}{\ell^3} \,.
\]
By \eqref{th ell}, \eqref{A} and \eqref{A'}
\[
A'(\ell) \ell - 2 A(\ell) = \frac{2 \p^2 \cos \th(\ell)}{\sin \th(\ell)}\,.
\]
Then $A(\ell)/\ell^2$ has one global maximum at $\ell=\p/2$,
and \eqref{A control} follows because $A(\p/2) = \p^3/2$.
\end{proof}

\begin{proposition} \label{ell}
Suppose that $\{(u(\t),v(\t)): \tau \in [0,2\pi]\}$ is a parametrization of a rectifiable curve of length $2\p \ell$, $\ell > 1$, with $v(0)=v(2\p)$ and $u(2\p)-u(0)=2\p$.
Then
\[
\intp u'(\t)v(\t) d\t \leq A(\ell).
\]
\end{proposition}

\begin{proof}
Suppose that this is false for $(u,v)$ and define  continuous functions $U,\,V$ on the interval $[0,3\pi]$ as follows.
Let $(U,V)$ coincide with $(u,v)$ on $[0,2\pi]$,
let $(U(3\pi),V(3\pi)) = (u(0),v(0))$,
and let $\{(U(x),V(x)): x \in [2\pi,3\pi]\}$ be an injective parametrization of the arc of the circle with radius $r(\ell)$ which is complementary to $c(\ell)$.
Therefore, by the divergence theorem,
\[
\int_{2\pi}^{3\pi} U'(x)V(x) \, dx = \pi r(\ell)^2 - A(\ell).
\]
Since the proposition is supposed to be false, we find from the definition of $(U,V)$ that
\[
\int_0^{3\pi} U'(x)V(x) \,dx > \pi r(\ell)^2,
\]
where, by construction,
\[
\int_0^{3\pi} \sqrt{U'(x)^2 +V'(x)^2} \, dx = 2\pi r(\ell).
\]
This contradicts the isoperimetric inequality and proves the result.
\end{proof}
\end{observation}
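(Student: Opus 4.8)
The plan is to argue by contradiction and to reduce the bound to the analytic isoperimetric inequality already recorded in Observation 2, by ``closing up'' the curve $\{(u(\t),v(\t))\}$ with a circular arc chosen so that the resulting closed curve has length exactly $2\p\,r(\ell)$ --- the perimeter of the disc of area $\p\,r(\ell)^2$ that figures in the definition of $A(\ell)$.

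So suppose, for contradiction, that $\intp u'(\t)v(\t)\,d\t > A(\ell)$. Take the circle of radius $r(\ell)$ given by \eqref{th ell}, in which a chord of length $2\p$ subtends an arc $c(\ell)$ of length $2\p\ell$; by definition of $A(\ell)$ (see the paragraph preceding \eqref{A}), the region between that arc and the chord has area $A(\ell)$, and the complementary arc $c'(\ell)$ has length $2\p r(\ell)-2\p\ell$, which is positive since $\th(\ell)<\p$. Since the endpoints $(u(0),v(0))$ and $(u(2\p),v(2\p))$ lie at the same height and at horizontal distance $2\p$, they are precisely the endpoints of such a chord; position the circle accordingly, with $c(\ell)$ on the far side. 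Then extend $(u,v)$ to a closed curve $(U,V)$ on $[0,3\p]$: let $(U,V)=(u,v)$ on $[0,2\p]$, and on $[2\p,3\p]$ let $(U,V)$ be an injective parametrization of $c'(\ell)$ running from $(u(2\p),v(2\p))$ to $(u(0),v(0))$. Replacing the rectifiable piece by its arc-length parametrization, $(U,V)$ is absolutely continuous, closed (so $U(0)=U(3\p)$ and $V(0)=V(3\p)$), and of total length $2\p\ell+(2\p r(\ell)-2\p\ell)=2\p r(\ell)$.

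Next I would split $\int_0^{3\p}U'(x)V(x)\,dx$ into the contributions over $[0,2\p]$ and over $[2\p,3\p]$. The second contribution is the algebraic area of the circular segment cut off by $c'(\ell)$, so the divergence theorem gives $\int_{2\p}^{3\p}U'(x)V(x)\,dx=\p\,r(\ell)^2-A(\ell)$. Combined with the contradiction hypothesis, this yields
\[
\int_0^{3\p}U'(x)V(x)\,dx=\intp u'v\,d\t+\p\,r(\ell)^2-A(\ell)>\p\,r(\ell)^2 .
\]
On the other hand, the radius in Observation 2 is $R=\frac1{2\p}\int_0^{3\p}\sqrt{U'^2+V'^2}\,dx=r(\ell)$, so Hurwitz's inequality applied to the absolutely continuous closed curve $(U,V)$ gives $\big|\int_0^{3\p}U'(x)V(x)\,dx\big|\le\p\,r(\ell)^2$ --- a contradiction. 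Hence $\intp u'v\,d\t\le A(\ell)$.

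The step I expect to be most delicate is pinning down the identity $\int_{2\p}^{3\p}U'V\,dx=\p\,r(\ell)^2-A(\ell)$ with the correct sign: one has to check that $c'(\ell)$ is traversed from $(u(2\p),v(2\p))$ back to $(u(0),v(0))$ and that the orientation inherited from $(u,v)$ is the one for which the complementary segment contributes $+(\p\,r(\ell)^2-A(\ell))$, the chord of $c'(\ell)$ being the one connecting the two (equal-height, horizontally $2\p$-apart) endpoints; this is exactly where the hypotheses $v(0)=v(2\p)$ and $u(2\p)-u(0)=2\p$ are used. A secondary, routine point is that $(u,v)$ is only assumed rectifiable while Hurwitz's inequality is stated for absolutely continuous $U,V$; passing to the arc-length parametrization repairs this and leaves $\intp u'v\,d\t=\int v\,du$ unchanged, since that line integral depends only on the curve. (A more computational alternative would be to maximize $\int v\,du$ directly over all curves of length $2\p\ell$ joining the two given endpoints, via a Lagrange-multiplier argument forcing the extremal to have constant curvature and hence to be the arc $c(\ell)$ with value $A(\ell)$; but that route also requires establishing existence of the maximizer.)
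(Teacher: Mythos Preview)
Your argument is essentially identical to the paper's: both proceed by contradiction, close up the curve on $[2\pi,3\pi]$ by the complementary arc of the circle of radius $r(\ell)$, compute $\int_{2\pi}^{3\pi}U'V\,dx=\pi r(\ell)^2-A(\ell)$ via the divergence theorem, and then contradict Hurwitz's isoperimetric inequality since the closed curve has total length $2\pi r(\ell)$. Your additional remarks on orientation/sign and on passing to an arc-length parametrization are reasonable caveats that the paper leaves implicit.
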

\begin{observation}
The third observation is the vector version of Jensen's inequality:
if $f:\R^n \to \R$ is convex, $U \subset \R^m$ has unit measure and $u:U \to \R^n$, then
\begin{equation}  \label{jensen}
\int_U f(u(x)) \, dx \geq f \Big( \int_U u(x) \, dx \Big).
\end{equation}
This is immediate from the fact (\cite{E&T}, Ch. I.3) that $f:\R^n \to \R$ is convex  if and only if
\[
f(x) = \sup \big\{ a \cdot x +b \,:
\ a \in \R^n, \ b \in \R, \ a\cdot y+b \leq f(y) \ \, \forall y \in \R^n  \big\}.
\]
\end{observation}

\subsection{Estimates of $\Theta(w)$ when $J_0(w,\chi) \geq 0$}
We seek a positive maximum of $J_0$ on $\mA_0 \times \mD$.

\begin{lemma} \label{lemma:Sigma>0}
Suppose that \emph{(H\ref{hyp:exists E},\ref{hyp:E even mu},\ref{hyp:E=0},\ref{hyp:Sigma})} hold.
Then
\[
\Sigma := \sup_{(w,\chi) \in \mA_0 \times \mD} J_0(w,\chi) > 0 .
\]
\end{lemma}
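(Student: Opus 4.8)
The plan is to exhibit an explicit family of competitors $(w_\lambda, \chi_\lambda)$ in $\mA_0 \times \mD$ for which $J_0(w_\lambda,\chi_\lambda) > 0$ for small $\lambda > 0$. Since $J_0 = I_0(w) - \mE(w,\chi)$, and since $\mE$ is controlled in the small-amplitude regime by a second-order Taylor expansion of $E$ about the rest state $(1,0)$, the natural choice is the lowest Fourier mode: take $w_\lambda(\t) = \lambda \cos\t$ (which has mean zero and lies in $\mA_0$ for $\lambda$ small, since then $\Om(w_\lambda)$ is bounded away from $0$ and $\infty$ and $\log\Om(w_\lambda)$ is smooth), and pick $\chi_\lambda$ to be a near-identity diffeomorphism — in fact I expect the simplest choice $\chi_\lambda = \mathrm{id}$ to work, or else a small correction of order $\lambda$ chosen to make the membrane stretch as close to constant as possible.

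The key computation is the expansion of $J_0(w_\lambda,\chi_\lambda)$ to order $\lambda^2$. For $I_0$: with $w=\lambda\cos\t$ one has $\mC w' = \lambda\cos\t$, $w' = -\lambda\sin\t$, so $\intp w'\mC w\,d\t = \pi\lambda^2$, $\intp w^2(1+\mC w')\,d\t = \pi\lambda^2 + O(\lambda^3)$, and the quartic correction term in $I_0$ is $O(\lambda^4)$; hence $I_0(w_\lambda) = \frac{\pi\lambda^2}{2}(c^2 - g) + O(\lambda^3)$. For $\mE$: with $\chi=\mathrm{id}$, $\nu = \Om(w_\lambda) = \sqrt{(1+\lambda\cos\t)^2 + \lambda^2\sin^2\t} = 1 + \lambda\cos\t + O(\lambda^2)$ and $\mu = \Om(w_\lambda)\s(w_\lambda)$; one computes $\s(w_\lambda) = \Th(w_\lambda)'/\Om(w_\lambda)$ with $\Th(w_\lambda) = -\mC\log\Om(w_\lambda) = \lambda\sin\t + O(\lambda^2)$, so $\mu = \lambda\cos\t + O(\lambda^2)$. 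Then, using (H\ref{hyp:E=0}) so that $E(1,0)=0$, $\nabla E(1,0)=0$ (the rest state is a minimum), and (H\ref{hyp:E even mu}) so that $E_{12}(1,0)=0$, the Taylor expansion gives $E(\nu,\mu) = \frac12 E_{11}(1,0)(\nu-1)^2 + \frac12 E_{22}(1,0)\mu^2 + o(\lambda^2)$, whence $\mE(w_\lambda,\mathrm{id}) = \frac{\pi\lambda^2}{2}\big(E_{11}(1,0) \cdot \tfrac12 + E_{22}(1,0)\cdot\tfrac12\big) + o(\lambda^2)$ — more precisely $\intp \cos^2\t\,d\t = \pi$ so $\mE = \frac{\pi\lambda^2}{4}(E_{11}(1,0) + E_{22}(1,0)) + o(\lambda^2)$. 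Combining, $J_0(w_\lambda,\mathrm{id}) = \frac{\pi\lambda^2}{2}\big(c^2 - g - \tfrac12 E_{11}(1,0) - \tfrac12 E_{22}(1,0)\big) + o(\lambda^2)$, which is positive for small $\lambda$ provided $c^2 > g + \tfrac12(E_{11}(1,0) + E_{22}(1,0))$.

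The main obstacle is that this threshold does not match (H\ref{hyp:Sigma}), which only assumes $c^2 > g + E_{22}(1,0)$; the $E_{11}$ term must be removed and the $E_{22}$ coefficient corrected. This is exactly why the choice of $\chi_\lambda$ matters: by allowing $\chi_\lambda' = 1 + \lambda a(\t) + O(\lambda^2)$ one can adjust the effective stretch $\nu = \Om(w_\lambda)/\chi_\lambda'$ and reduce the $(\nu-1)^2$ contribution. Choosing $a(\t) = \cos\t$ makes $\nu \equiv 1 + O(\lambda^2)$, killing the $E_{11}$ term entirely at order $\lambda^2$; one must then check that this reparametrization changes neither $I_0$ (which depends only on $w$, not $\chi$) nor the curvature contribution $\mu = \Om(w_\lambda)\s(w_\lambda)$ (which, being intrinsic, is unchanged), and that it leaves the leading $\mu$-term with coefficient $\frac12 E_{22}(1,0)\cdot\intp\mu^2 = \frac{\pi\lambda^2}{2}E_{22}(1,0)$ after recomputing — a careful bookkeeping of the $O(\lambda)$ terms in $\mu$ should yield $\intp\mu^2\,d\t$ with the coefficient that produces exactly $g + E_{22}(1,0)$ as the threshold. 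The remaining routine points are: verifying $w_\lambda \in \mA_0$ (mean-zero, $\log\Om \in \mH^{1,1}_\R$ with mean zero — the mean-zero condition on $\log\Om$ holds automatically for small smooth $w$ by the standard complex-analytic identity) and $\chi_\lambda \in \mD$ (monotone, fixes endpoints, which holds for $\lambda$ small since $\chi_\lambda' > 0$), and controlling the error terms uniformly so that the sign of $J_0$ is genuinely governed by the $\lambda^2$ coefficient. Once $J_0(w_\lambda,\chi_\lambda) > 0$ for some $\lambda$, we conclude $\Sigma \geq J_0(w_\lambda,\chi_\lambda) > 0$.
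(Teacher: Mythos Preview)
Your approach is essentially the paper's: it too takes $w_\e = \e\cos\t$ together with $\chi'_\e = 1 + \e\cos\t$, expands to order $\e^2$, and obtains $J_0(w_\e,\chi_\e) = \tfrac{\pi}{2}\big(c^2 - g - E_{22}(1,0)\big)\e^2 + o(\e^2)>0$. One small correction to your bookkeeping: in $\mE(w,\chi)$ the second argument of $E$ is $\Om\sigma/\chi'$, not $\Om\sigma$, so $\mu$ is not literally ``intrinsic''; however, dividing by $\chi'_\lambda = 1 + O(\lambda)$ only perturbs $\mu$ at order $\lambda^2$, so your conclusion stands.
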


\begin{proof}
Let $w_\e(\t) := \e \cos \t$ and $\chi'_\e(\t) := 1 + \e \cos \t$.
Then $\mC w_\e' = \e \cos \t$, and calculations with Taylor series for $\e \to 0$ give
\[
I_0 (w_\e) = \frac\p2 \, (c^2-g) \, \e^2 + O(\e^4),
\]
\[
\frac{\Om(w_\e)}{\chi'_\e}\, = 1 + O(\e^2),
\quad
\frac{\Om(w_\e)\, \s(w_\e)}{\chi'_\e}\, =  - \,\e \cos \t + O(\e^2).
\]
By (H\ref{hyp:exists E},\ref{hyp:E even mu},\ref{hyp:E=0}), the Taylor series of $E(\nu,\mu)$ near $(1,0)$ is
\[
E(\nu,\mu) = \frac{E_{11}(1,0)}{2}\, (\nu-1)^2 + \frac{E_{22}(1,0)}{2}\, \mu^2 + o( (\nu-1)^2  + \mu^2 ).
\]
Hence
\[
\chi'_\e \, E\Big( \frac{\Om(w_\e)}{\chi'_\e}\,,
\frac{\Om(w_\e)\, \s(w_\e)}{\chi'_\e} \Big)
= \frac{E_{22}(1,0)}{2}\, \e^2 \cos^2 \t + o(\e^2),
\]
and
\[
\mE(w_\e, \chi_\e) = \frac\p2 \, E_{22}(1,0) \, \e^2 + o(\e^2).
\]
Thus (H\ref{hyp:Sigma}) implies that $J_0(w_\e,\chi_\e) >0$ for all $\e \neq0$ sufficiently small.
\end{proof}

\begin{lemma}  \label{lemma:compactness}
Suppose that \emph{(H\ref{hyp:exists E},\ref{hyp:E even mu},\ref{hyp:E jointly convex},\ref{hyp:special})} hold and $\mu^* <1$ is that defined in \emph{(H\ref{hyp:special})}.
Then, for all $(w,\chi)\in \mA_0 \times \mD$ such that $J_0(w,\chi) > 0$,
\[
| \Theta(w) (\t_1) - \Theta(w) (\t_2) | < \mu^* \p \quad \forall \t_1, \t_2 \in \R .
\]
\end{lemma}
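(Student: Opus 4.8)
The plan is to bound the oscillation of $\Theta(w) = -\mathcal{C}\log\Omega(w)$ by using the hypothesis $J_0(w,\chi) > 0$ to control the elastic energy $\mathcal{E}(w,\chi)$, and hence the integral of $E$ evaluated at the stretch and bending of the membrane, against the ``positive'' part $I_0(w)$ which I can estimate from above using the isoperimetric-type inequalities of the previous subsection. Concretely, since $J_0(w,\chi) = I_0(w) - \mathcal{E}(w,\chi) > 0$, we have $\mathcal{E}(w,\chi) < I_0(w)$. I would first bound $I_0(w)$ in terms of the length parameter $\ell = \ell(w)$: using Observation 1 (the bound $\|w\|_\infty \leq \pi\sqrt{\ell^2-1}$), Proposition \ref{ell} applied to the parametrization $\rho(w)$, and the explicit formulas $K = \frac{c^2}{2}\int_0^{2\pi} w\,\mathcal{C}w'$, $V_g = \frac{g}{2}\int_0^{2\pi} w^2(1+\mathcal{C}w')$, together with the definition of $I_0$, one gets something like $I_0(w) \leq \frac{c^2}{2}A(\ell) + \frac{g}{8\pi^2}A(\ell)^2 + \frac{g}{2}A(\ell)\sqrt{\ell^2-1}$ — precisely the right-hand side appearing in (H\ref{hyp:special}) with $\nu$ replaced by $\ell$. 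This is the step where I expect to have to be a little careful about signs and about which isoperimetric estimate bounds which term.

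Next I would relate the oscillation of $\Theta(w)$ to the stretch $\nu$ of the membrane. The key observation is that, since $\Theta(w)(\t) = \vartheta(\chi(\t))$ is the slope angle, the quantity $\mu(x) = \theta'(x)$ integrates to control angular variation, and the membrane stretch $\nu(x) = \Omega(w)(\t)/\chi'(\t)$ is linked to $\Omega(w)$. The cleanest route: suppose for contradiction that $|\Theta(w)(\t_1) - \Theta(w)(\t_2)| \geq \mu^*\pi$ for some $\t_1, \t_2$. Then along the deformed curve the slope angle varies by at least $\mu^*\pi$ over the corresponding material interval, which forces $\int |\mu(x)|\,dx \geq \mu^*\pi$ over that interval; combining with the evenness hypothesis (H\ref{hyp:E even mu}) and Jensen's inequality (Observation 3, the vector Jensen inequality \eqref{jensen}) applied to the convex function $\mu \mapsto E(\nu,\mu)$ (convex by (H\ref{hyp:E jointly convex})), one pushes the elastic energy down to a value at least $\mathcal{E} \geq \text{(length)} \cdot E(\ell, \mu^*)$ or something of this shape — using that the average stretch equals $\ell$ and the average bending over the offending interval is at least $\mu^*$ in absolute value, with evenness letting us replace $\mu$ by $+\mu^*$.

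Putting the two bounds together: on one hand $\mathcal{E}(w,\chi) < I_0(w) \leq \frac{g}{8\pi^2}A(\ell)^2 + \frac{c^2}{2}A(\ell) + \frac{g}{2}A(\ell)\sqrt{\ell^2-1}$; on the other hand, the contradiction hypothesis forces $\mathcal{E}(w,\chi) \geq 2\pi\, E(\ell,\mu^*)$ (after Jensen, with the $2\pi$ coming from the total material period, or the right sub-interval length — I would track whether I need the full period or a sub-interval, and note that if it's a sub-interval of length $L_0 < 2\pi$ the rest of the curve still contributes nonnegatively by (H\ref{hyp:E=0}), so the bound survives). Since $\ell \geq 1$ (as $\ell > 1$ whenever $w$ is nontrivial, and the case $\ell = 1$ is trivial as $\Theta \equiv $ const), (H\ref{hyp:special}) says exactly $E(\ell,\mu^*) \geq \frac{g}{8\pi^2}A(\ell)^2 + \frac{c^2}{4\pi}A(\ell) + \frac{g}{2}A(\ell)\sqrt{\ell^2-1}$, and multiplying by $2\pi$ gives $2\pi E(\ell,\mu^*) \geq \frac{g}{4\pi}A(\ell)^2 + \frac{c^2}{2}A(\ell) + \pi g A(\ell)\sqrt{\ell^2-1}$, which is strictly larger than the upper bound on $\mathcal{E}$ (the $A(\ell)^2$ coefficient alone is doubled) — a contradiction.

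The main obstacle I anticipate is the bookkeeping in the second step: correctly translating ``oscillation of $\Theta(w)$ by at least $\mu^*\pi$'' into a lower bound on $\mathcal{E}(w,\chi)$ via Jensen, being careful that the convexity in (H\ref{hyp:E jointly convex}) is \emph{joint} convexity so $(\nu,\mu)\mapsto E(\nu,\mu)$ is genuinely convex on the half-plane and Jensen \eqref{jensen} applies to the pair; and handling the normalization of the interval of integration (whether one integrates $\chi'E(\Omega/\chi', \cdot)$ over all of $[0,2\pi]$ or over the preimage of the offending arc) so that the averages that feed into Jensen come out to be $\ell$ for the stretch and $\geq \mu^*$ for the bending. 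Once those averages are pinned down, the inequality chain closes cleanly against (H\ref{hyp:special}), with the strictness coming from the strict inequality $J_0(w,\chi) > 0$ (or, alternatively, from the doubling of the $A(\ell)^2$ term noted above).
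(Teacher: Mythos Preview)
Your strategy is the paper's, but two points in the plan as written do not close.

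First, the upper bound you state for $I_0(w)$ has a factor-of-$2\pi$ slip. Tracking the three terms in the definition of $I_0$ through Proposition~\ref{ell} and Observation~1 gives
\[
I_0(w) \,\leq\, \frac{g}{4\pi}\,A(\ell)^2 + \frac{c^2}{2}\,A(\ell) + g\pi\, A(\ell)\sqrt{\ell^2-1},
\]
which is exactly $2\pi$ times the right-hand side of (H\ref{hyp:special}), not the right-hand side itself. So there is no ``doubling of the $A(\ell)^2$ coefficient'' to exploit; the contradiction must come solely from the strict inequality $\mathcal{E}(w,\chi)<I_0(w)$ combined with the weak inequality in (H\ref{hyp:special}).

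Second, and this is the genuine gap, your passage from the oscillation hypothesis to the Jensen lower bound is incomplete. From $|\Theta(w)(\t_1)-\Theta(w)(\t_2)|\geq\mu^*\pi$ on a single arc you extract only $\int_{\t_1}^{\t_2}|\Theta(w)'|\,d\t\geq\mu^*\pi$, hence on the full period $m:=\frac{1}{2\pi}\int_0^{2\pi}|\Theta(w)'|\,d\t\geq\mu^*/2$; Jensen over the full period then yields $\mathcal{E}\geq 2\pi E(\ell,\mu^*/2)$, which (H\ref{hyp:special}) does not control. A sub-interval Jensen cannot repair this either, since on a sub-interval the average stretch is not $\ell$. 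The missing ingredient is periodicity of $\Theta(w)$: the complementary arc $[\t_2,\t_1+2\pi]$ carries the same absolute variation as $[\t_1,\t_2]$, so in fact $\int_0^{2\pi}|\Theta(w)'|\,d\t\geq 2\mu^*\pi$ and $m\geq\mu^*$. The paper runs this in the forward direction: it applies Jensen over the full period (using (H\ref{hyp:E even mu}) to replace $\mu$ by $|\mu|$, and the joint convexity (H\ref{hyp:E jointly convex}) in \eqref{jensen}, with the probability measure $\chi'\,d\t/2\pi$) to get $\mathcal{E}(w,\chi)\geq 2\pi E(\ell,m)$, combines this with the $I_0$ bound and (H\ref{hyp:special}) to force $m<\mu^*$, and only then converts $m<\mu^*$ into the oscillation bound via
\[
|\Theta(w)(\t_1)-\Theta(w)(\t_2)|
=\tfrac12\Big|\int_{\t_1}^{\t_2}\Theta(w)'\,d\t-\int_{\t_2}^{\t_1+2\pi}\Theta(w)'\,d\t\Big|
\leq\tfrac12\int_0^{2\pi}|\Theta(w)'|\,d\t=\pi m.
\]
Once you insert this periodicity step, your argument and the paper's coincide.
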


\begin{proof}
First, we consider the elastic energy $\mE(w,\chi)$, which is
\[
\mE(w,\chi) = \intp \chi' \, E\Big( \frac{\Om(w)}{\chi'}\,, \,
\frac{\Om(w) \, |\s(w)|}{\chi'} \Big) \, d\t
\]
by (H\ref{hyp:E even mu}).
From (H\ref{hyp:E jointly convex}),
and Jensen's inequality \eqref{jensen} it follows that
\begin{equation} \label{V>e} 
\mE(w,\chi)
\geq
2\p \, E \big( \ell(w), m(w) \big),
\end{equation}
where
\[
\ell(w) := \frac{1}{2\p} \intp \Om(w)(\t) \, d\t,
\quad
m(w) := \frac1{2\p} \intp |\Theta(w)'(\tau)| \, d\t .
\]

Now we consider
\[
I_0(w) = \frac{g}{4\p} \Big( \intp w \mC w' d\t \Big)^2 +
\frac{c^2}{2} \, \intp w  \mC w' d\t
- \frac{g}{2} \, \intp w^2 (1+\mC w') \, d\t .
\]
Since $w$ is $2\p$-periodic and has zero mean, by Proposition \ref{ell},
\[
0 \leq \intp w \mC w' d\t = \intp w (1+\mC w') d\t \leq A(\ell(w) ),
\]
because the length of a period of $\mS(w)$ is $2\p\ell(w)$.
Integrating by parts shows that
\begin{align*}
\intp w^2 \mC w' d\t
& = \intp w ( w \mC w' - \mC (ww')) \, d\t + \intp w \mC (w w') \, d\t \\
& = \intp w ( w \mC w' - \mC (ww')) \, d\t + \frac12 \intp w^2 \mC w' d\t .
\end{align*}
Since  $w \mC w' - \mC (w w')\geq 0$ almost everywhere (see \cite{Toland-1998}, Prop. 3.1),
\[
\Big| \intp w^2 \mC w' d\t \Big| \leq 2 \| w \|_{\infty}\intp  ( w \mC w' - \mC (w w')) \, d\t
= 2 \| w \|_{\infty} \intp w \mC w' d\t,
\]
because $\mC (w w')$ has zero mean.
From these inequalities and \eqref{linf},
\begin{equation}  \label{I0}
I_0(w) \leq \frac{g}{4\p } A(\ell(w) )^2 +\frac{c^2}{2}A(\ell(w)) +g\pi A(\ell(w) )\sqrt{\ell(w)^2 - 1} \, .
\end{equation}
Let $J_0(w,\chi) > 0$. Then $\mE(w,\chi) < I_0(w)$, whence, by \eqref{V>e} and \eqref{I0},
\[
E (\ell, m ) < \frac{g}{8\p^2 } A(\ell )^2 +\frac{c^2}{4\p}A(\ell) +\frac g2 A(\ell )\sqrt{\ell^2-1},
\]
where $\ell = \ell(w)$ and $m=m(w)$.
Now, suppose, for contradiction, that $m \geq \mu^*$.
Since,
by (H\ref{hyp:E even mu}) and (H\ref{hyp:E jointly convex}),
$\mu \mapsto E(\nu,\mu)$ is increasing on $[0,+\infty)$ for every fixed $\nu$,
\[
E ( \ell ,\mu^* )
\leq
E(\ell, m )
<
\frac{g}{8\p^2 } A(\ell )^2 + \frac{c^2}{4\p}A(\ell)
+ \frac g2 A(\ell) \sqrt{\ell^2-1},
\]
violating (H\ref{hyp:special}).
Thus, $m < \mu^*$.
Therefore, for every $\t_1 < \t_2 < \t_1 + 2\p$,
\begin{align*}
|\Theta(w)(\t_1) - \Theta(w)(\t_2)|
& \leq
\frac12 \, \Big| \int_{\t_1}^{\t_2} \Theta(w)'(\t)\, d\t - \int_{\t_2}^{\t_1 + 2\p} \Theta(w)'(\t) \,d\t \Big| \\
& \leq
\frac12 \intp |\Theta(w)'(\t)| \,d\t
= \p m
< \mu^* \p < \p.  \qedhere
\end{align*}
\end{proof}

\subsection{Maximising sequences}
Assume (H\ref{hyp:Sigma}), so that $\Sigma >0$ by Lemma \ref{lemma:Sigma>0}, and let $\{ (w_k, \chi_k )\} \subset \mA_0 \times \mD$ be a maximising sequence, with
\[
0 < J_0 (w_k, \chi_k) \to \Sigma \quad (k \to \infty).
\]
For convenience, let $\Om_k := \Om(w_k)$, $\s_k := \s(w_k)$,
$\Theta_k := \Theta(w_k)$.
By Lemma \ref{lemma:compactness} we have that
\[
| \Theta_k (\t_1) - \Theta_k (\t_2) | < \mu^* \p
\quad \forall \t_1, \t_2 \in \R,
\]
and the bound is uniform for $k \in \N$.
Let
\[
M_k := \frac12 \, \Big( \max_{\t \in [0,2\p]} \Theta_k(\t) + \min_{\t \in [0,2\p]} \Theta_k(\t) \Big)
\]
and set
\[
\widetilde \Theta_k (\t) := \Theta_k (\t) - M_k .
\]
Then $\| \widetilde \Theta_k \|_\infty < \mu^* \p /2$. Moreover, since
$\Theta_k = - \mC \log \Om_k$ and $[\log \Om_k] = 0$,
\[
\mC \widetilde \Theta_k = \mC  \Theta_k = \log \Om_k.
\]
We recall Zygmund's Theorem \cite{Zyg}, Vol.~I, page 257: If $f \in L^\infty_{2\p}$, then
\[
\intp \exp \{ q | \mC f (\t)| \} \, d\t
\leq \frac{4\p}{ \cos ( q \| f \|_\infty ) }
\quad \text{for all } 0 \leq q < \frac{ \p }{ 2 \| f \|_\infty } \,.
\]
We apply this result to $f = \widetilde \Theta_k$ and we get
\[
\intp \Om_k^{\,q} \, d\t
\leq \intp e^{ q |\log \Om_k|}\, d\t
< \frac{ 4\p}{ \cos ( q \mu^* \p /2 )}
\quad \text{for all }  0 \leq q < \frac{1}{\mu^*} \, .
\]
Since $\mu^*<1$, in what follows we can fix an exponent $q\in (1,1/\mu^*)$
and obtain a uniform bound 
\begin{equation}  \label{Om Lq}
\| \Om_k \|_{L^q_{2\p}} \leq C \quad \forall k
\end{equation}
for the maximising sequence.

By \eqref{Om Lq} and H\"older's inequality we get a uniform bound for $\Om_k$ in $L^1_{2\p}$,
\[
L_k := L(w_k) = \intp \Om_k \, d\t \leq C
\quad \forall k,
\]
where here, and more generally in this section, $C$ denotes positive, possibly different constants.
By \eqref{I0} $I_0(w)$ is dominated by a function of $L(w)$. Then, since   $J_0(w_k, \chi_k) > 0$, we have a uniform bound for the elastic energy
\begin{equation}  \label{elastic bound}
\mE(w_k,\chi_k) < I_0(w_k) \leq C
\quad \forall k .
\end{equation}
Since $[\chi'_k] = 1$ for all $k$,
from (H\ref{hyp:basic growth}) it follows that
\begin{equation}  \label{power bound}
\intp \Big\{
\Big( \frac{\Om_k}{\chi'_k} \Big)^{r}
+ \Big( \frac{\chi'_k}{\Om_k} \Big)^s
+ \Big( \frac{\Om_k \,|\s_k|}{\chi'_k}\Big)^p
\Big\} \, \chi'_k \, d\t \,
\leq C
\quad \forall k.
\end{equation}
Now, let
\[
\a:= 1 + \frac sq\,,
\quad \b:= \frac s\a\,,
\quad \g := \frac{s+1}{\a}\,.
\]
By H\"older's inequality,
\[
\intp (\chi'_k)^\g \, d\t
\leq \Big( \intp \Big( \frac{(\chi'_k)^\g}{\Om_k^\b} \Big)^\a \, d\t \Big)^{\frac{1}{\a}}
\Big( \intp (\Om_k^\b)^{\a'}\,d\t \Big)^{\frac{1}{\a'}} .
\]
Then, by \eqref{power bound} and \eqref{Om Lq},
\begin{equation}  \label{chi' Lg}
\| \chi'_k \|_{L^\g_{2\p}} \leq C \quad \forall k,
\end{equation}
because $\g \a = s+1$, $\b\a=s$ and $\a'\b=q$.
Note that $\g>1$ because $q>1$.

Now let
\[
\lm:= 1 + \frac{p-1}{\g}\,, \quad
\eta := \frac{p}{\lm}\,, \quad
\xi := \frac{p-1}{\lm}\,.
\]
By H\"older's inequality,
\[
\intp |\Th'_k|^\eta \, d\t
\leq
\Big( \intp \Big( \frac{|\Th'_k|^\eta}{(\chi'_k)^\xi} \Big)^\lm \, d\t \Big)^{\frac{1}{\lm}}
\Big( \intp (\chi'_k)^{\xi\lm'}\,d\t \Big)^{\frac{1}{\lm'}} .
\]
Then, by \eqref{power bound} and \eqref{chi' Lg}, and recalling that
$\Om_k \, \s_k=\Th_k'$,
\[
\| \Th'_k \|_{L^\eta_{2\p}} \leq C \quad \forall k,
\]
because $\eta \lm = p$, $\xi\lm=p-1$ and $\xi\lm'=\g$.
Note that $\eta>1$ because $\g>1$.
Since $[ \Theta_k ] = [-\mC \log \Omega_k]=0$,
\[
\| \Theta_k \|_{W^{1,\eta}_{2\p}} \leq C
\quad \forall k.
\]
It follows that
\[
\| \mC \Theta_k \|_{W^{1,\eta}_{2\p}}
= \| \log \Om_k \|_{W^{1,\eta}_{2\p}} \leq C
\quad \forall k
\]
and then $\log \Om_k$ are absolutely continuous functions with a uniform bound
$\| \log \Om_k \|_\infty \leq C$.
This means that there are two constants $C,C' $ such that
\begin{equation}  \label{lower upper bound}
0 < C \leq \Om_k(\t) \leq C' \quad \forall \t, \ \forall k.
\end{equation}
Thanks to \eqref{lower upper bound}, the bound \eqref{power bound} becomes
\begin{equation}  \label{power bound after}
\intp \Big(
\frac{1}{(\chi'_k)^{r-1}}\,
+ (\chi'_k)^{s+1}
+ \frac{|\s_k|^p}{(\chi'_k)^{p-1}}
\Big) \, d\t
\leq C
\quad \forall k.
\end{equation}
Thus \eqref{chi' Lg} can be improved to
\[ 
\| \chi'_k \|_{L^{s+1}_{2\p}} \leq C \quad \forall k.
\] 
Then, since $\chi_k(0)=0$ for all $k$,
\[
\| \chi_k \|_{W^{1,s+1}(0,2\p)} \leq C \quad \forall k.
\footnote{We write $W^{1,s+1}(0,2\p)$, and not $W^{1,s+1}_{2\p}$, because the diffeomorphism $\chi_k$, unlike its derivative $\chi'_k$, is not a periodic function; see \eqref{def mD}.}
\]
Also, let
\[
a:= 1 + \frac{p-1}{s+1}\,, \quad
b:= \frac{p-1}{a}\,, \quad
\rho := \frac pa\,= \frac{p(s+1)}{s+p}\,.
\]
By H\"older's inequality,
\[
\intp |\s_k|^\rho \, d\t
\leq
\Big( \intp \Big( \frac{|\s_k|^\rho}{(\chi'_k)^b} \Big)^a \, d\t \Big)^{\frac{1}{a}}
\Big( \intp (\chi'_k)^{b a'}\,d\t \Big)^{\frac{1}{a'}} .
\]
Then, by \eqref{power bound after},
\[ 
\| \s_k \|_{L^\rho_{2\p}} \leq C \quad \forall k,
\] 
because $\rho a = p$, $ba = p-1$ and $ba'= s+1$.
Note that $\rho>\eta>1$ by construction.
By the last inequality and \eqref{lower upper bound},
\[
\| \Theta_k' \|_{L^\rho_{2\p}} \leq C \quad \forall k
\]
and then $\| \mC \Theta_k' \|_{L^\rho} \leq C$,
\[
\| \Om_k' \|_{L^\rho_{2\p}} \leq C \quad \forall k.
\]
Since $w_k' = \Om_k \sin \Theta_k$ and $\| \Om_k \|_\infty \leq C$, by the previous bound we get
\begin{equation}  \label{wk''}
\| w_k'' \|_{L^\rho_{2\p}}
\leq C \quad \forall k,
\quad  \rho = \frac{p+sp}{p+s}\,,
\end{equation}
and so $w_k$ is a bounded sequence in $W^{2,\rho}_{2\p}$,  because $[w_k] = 0$ for all $k$.

\subsection{Weak limit $(w_0,\chi_0)$ of the maximising sequence}

By \eqref{wk''}, and since $[w_k]=0$, there exists a subsequence, which we denote $(w_k)$ as well, and a function $w_0 \in W^{2,\rho}_{2\p}$ such that $w_k \rightharpoonup w_0$ in $W^{2,\rho}_{2\p}$ weakly; more precisely:
\begin{align*}
& w_k \rightarrow w_0, 
\quad \mC w_k \rightarrow \mC w_0
\quad \, \text{in} \ L^\infty_{2\p} \ \text{strongly}, \vspace{4pt}\\
& w_k' \rightarrow w_0', 
\quad \mC w_k' \rightarrow \mC w_0'
\quad \, \text{in} \ L^\infty_{2\p} \ \text{strongly}, \vspace{4pt}\\
& w_k'' \rightharpoonup w_0'', 
\quad \mC w_k'' \rightharpoonup \mC w_0'' 
\quad \text{in} \ L^\rho_{2\p}  \ \text{weakly}.
\end{align*}
In particular, $\Om_k \to \Om_0 := \Om(w_0)$ uniformly and, as a consequence of \eqref{lower upper bound}, $\Om_0$ is bounded below, so that
\begin{equation}  \label{Om uniform}
\Om_k \to \Om_0  
\quad \text{and} \quad 
\log \Om_k \to \log \Om_0 
\quad \text{in} \ L^\infty_{2\p}.	
\end{equation}
It follows that $[\log \Om_0 ] = [w_0]=0$ because $[\log \Om_k]=[w_k]=0$ for all $k$.
Thus $w_0 \in \mA_0$.
Since $\mC w_k' \to \mC w_0'$ and $w_k' \to w_0'$  in $L^\infty_{2\p}$, it follows that
\[
\Om_k' \rightharpoonup \Om_0' 
\quad \text{and} \quad
\frac{ \Om_k' }{ \Om_k } \, \rightharpoonup \frac{\Om_0'}{\Om_0}\, 
\quad \text{in} \ L^\rho_{2\p}  \ \text{weakly}
\]
and so
\[
\Theta_k' = - \mC \Big( \frac{ \Om_k' }{ \Om_k } \, \Big) \,
\rightharpoonup - \mC \Big( \frac{\Om_0'}{\Om_0}\, \Big)\, = \Theta_0'
 \quad \text{in} \ L^\rho_{2\p}  \ \text{weakly}\,.
\]
Thus $\Theta_k' / \Om_k \rightharpoonup \Theta_0' / \Om_0$ weakly, that is
\begin{equation}  \label{sigma weak convergence}
\s_k \rightharpoonup \s_0
 \quad \text{in} \ L^\rho_{2\p}  \ \text{weakly}	
\end{equation}
where $\s_0 := \s(w_0)$.

Moreover, a subsequence $(\chi_k)$ converges to some $\chi_0 \in W^{1,s+1}(0,2\p) \cap \mD$,
\begin{align}  \label{chi weak convergence}
& \chi_k \rightarrow \chi_0
\quad \text{in} \ L^\infty(\R) \ \text{strongly}, \\ 
\notag
& \chi_k' \rightharpoonup \chi_0' \quad \text{in} \ L^{s+1}_{2\p}  \ \text{weakly}.
\end{align}

Obviously
\[
\intp w_k \mC w_k' \, d\t
\to \intp w_0 \mC w_0' \, d\t
\]
and
\[
\intp w_k^2 (1 + \mC w_k') \, d\t
\to
\intp w_0^2 (1 + \mC w_0') \, d\t,
\]
so that $I_0(w_k) \to I_0 (w_0)$.

\subsection{The existence of a maximum}
To prove that $(w_0, \chi_0)$ is a maximiser for $J_0$, it is more convenient to write the elastic energy as
\[
\mE(w,\chi) = \intp \Om(w) \, E^\star \Big( \frac{\chi'}{\Om(w)}\,, \s(w) \Big) \, d\t,
\]
where
\[
E^\star(t,\s) := t E\Big( \frac{1}{t}\,, \frac{\s}{t} \Big),
\quad \forall t > 0, \ \s\in \R.
\]
Note that $(E^\star)^\star=E$, that is
\[
E(\nu,\mu) = \nu E^\star\Big( \frac{1}{\nu}\,, \frac{\mu}{\nu} \Big),
\quad \forall \nu > 0, \ \mu\in \R.
\]
$E^\star$ is jointly convex in both its argument by (H\ref{hyp:E jointly convex}) and the following lemma. Note that $E^\star$ coincides with $\widetilde{\mathbf e}$ in \cite{Toland-steady}, Remark 3.1.

\begin{lemma} \label{lemma:E* convex}
$E^\star(t,\s)$ is jointly convex in $(t,\s)$ if and only if
$E(\nu,\mu)$ is jointly convex in $(\nu,\mu)$.
\end{lemma}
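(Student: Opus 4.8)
The plan is to exploit the duality between $E$ and $E^\star$. Note first that the transformation $E \mapsto E^\star$ is an involution: $(E^\star)^\star = E$, as recorded in the excerpt. Hence it suffices to prove a single implication, say that joint convexity of $E^\star$ follows from joint convexity of $E$; the converse is then obtained by applying the same implication to $E^\star$ in place of $E$ and using $(E^\star)^\star = E$.

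To prove that implication, I would work at the level of the Hessian. Write $E^\star(t,\sigma) = t\, E(1/t, \sigma/t)$ and compute the second derivatives of $E^\star$ in terms of those of $E$ evaluated at $(\nu,\mu) = (1/t, \sigma/t)$. A clean way to organize the computation is to observe that the map $(t,\sigma) \mapsto (\nu,\mu) = (1/t, \sigma/t)$ together with the prefactor $t$ is exactly the transformation appearing in the theory of perspective functions (or of positively homogeneous degree-one extensions): indeed, $E^\star(t,\sigma) = t\, E(1/t, \sigma/t)$ is (after a coordinate flip) the perspective of $E$. A standard fact is that the perspective of a convex function is convex; one then only has to check that the particular change of variables $\nu = 1/t$ preserves the relevant convexity because $t \mapsto 1/t$ is convex and decreasing and $E$ is increasing-compatible — but this needs care, so I would instead just do the direct Hessian calculation. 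Differentiating, one finds $E^\star_{tt}$, $E^\star_{t\sigma}$, $E^\star_{\sigma\sigma}$ as explicit combinations of $E_{11}, E_{12}, E_{22}, E_1, E_2$ times positive powers of $t$; the key algebraic miracle is that the discriminant $E^\star_{tt} E^\star_{\sigma\sigma} - (E^\star_{t\sigma})^2$ factors as a positive multiple of $E_{11} E_{22} - (E_{12})^2$, and $E^\star_{\sigma\sigma}$ is a positive multiple of $E_{22}$. Thus the sign conditions transfer verbatim, and since (H\ref{hyp:E jointly convex}) is the statement that this $2\times 2$ Hessian is positive definite, positivity of the Hessian of $E$ is equivalent to positivity of the Hessian of $E^\star$.

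A slightly slicker route, which avoids the two-by-two determinant bookkeeping, is epigraphical: $E$ is convex iff its epigraph $\{(\nu,\mu,z) : z \geq E(\nu,\mu)\}$ is a convex subset of $\{\nu > 0\} \times \R \times \R$. Under the bijective map $\Phi(\nu,\mu,z) = (1/\nu, \mu/\nu, z/\nu) = (t, \sigma, \zeta)$, the condition $z \geq E(\nu,\mu)$ becomes $\zeta \geq E^\star(t,\sigma)$ after multiplying through by $\nu = 1/t > 0$. So the epigraph of $E^\star$ is the image under $\Phi$ of the epigraph of $E$. Now $\Phi$ is the restriction of a projective (linear-fractional) transformation of $\R^3$ — specifically, division by the first coordinate — and such transformations map convex sets lying in the half-space $\{\nu > 0\}$ to convex sets. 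Hence epigraph of $E$ convex $\iff$ epigraph of $E^\star$ convex, which is the assertion. I would present whichever of these two arguments is shorter to write cleanly; probably the epigraph/projective-map argument, with the Hessian identity $E^\star_{\sigma\sigma} = t^{-1} E_{22}$ and $E^\star_{tt}E^\star_{\sigma\sigma} - (E^\star_{t\sigma})^2 = t^{-4}\big(E_{11}E_{22} - (E_{12})^2\big)$ mentioned as the infinitesimal version that additionally shows the \emph{strict} convexity in (H\ref{hyp:E jointly convex}) is preserved.

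The main obstacle is purely bookkeeping: carrying out the chain rule for $E^\star(t,\sigma) = t E(1/t,\sigma/t)$ correctly, keeping track of the factors of $t$ and the cross terms, so that the Hessian-determinant identity comes out with the stated clean form. There is no conceptual difficulty — the only thing one must be slightly careful about is that the domain $\{t > 0\}$ is preserved and that the projective map is genuinely a bijection between the two slabs, so that convexity (not merely "convexity along lines through a point") really does transfer. Since the paper only needs the equivalence to justify identifying (H\ref{hyp:E jointly convex}) with the convexity hypothesis of \cite{Toland-heavy} and to know $E^\star$ is convex, either formulation suffices.
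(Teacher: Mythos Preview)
Your proposal is correct, and your primary approach---computing the two Hessian identities
\[
E^\star_{\sigma\sigma} = t^{-1} E_{22},
\qquad
E^\star_{tt}\,E^\star_{\sigma\sigma} - (E^\star_{t\sigma})^2 = t^{-4}\big(E_{11}E_{22} - E_{12}^2\big),
\]
together with the involution $(E^\star)^\star = E$ to get the converse---is exactly what the paper does (the paper writes the same identities with the roles of $E$ and $E^\star$ interchanged, which is equivalent by symmetry). Your epigraphical argument via the projective map $(\nu,\mu,z)\mapsto(1/\nu,\mu/\nu,z/\nu)$ is a genuinely different and somewhat slicker route that the paper does not take; as you already note, it gives the bare convexity equivalence in one stroke but does not by itself transfer the \emph{strict} inequalities in (H\ref{hyp:E jointly convex}), for which the Hessian identities are still needed.
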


\begin{proof}
Differentiating gives
\[
E_{22}(\nu,\m) = \frac1\nu \, E^\star_{22}\Big( \frac{1}{\nu}\,, \frac{\nu}{\mu} \Big)
\]
and
\[
E_{11}(\nu,\mu) \, E_{22}(\nu,\mu) - E_{12}(\nu,\mu)^2
= \frac{1}{\nu^4}
\Big\{ E^\star_{11} \Big( \frac{1}{\nu}\,, \frac{\nu}{\mu} \Big) \,
E^\star_{22}\Big( \frac{1}{\nu}\,, \frac{\nu}{\mu} \Big)
- E^\star_{12} \Big( \frac{1}{\nu}\,, \frac{\nu}{\mu} \Big)^2 \Big\}.
\]
Then the jointly convexity of $E$, that is (H\ref{hyp:E jointly convex}), implies that
$E^\star_{11}$, $E^\star_{22}$ and $E^\star_{11} E^\star_{22} - (E^\star_{12})^2$ are positive at every $(t,\s)$ with $t > 0$ and $\s \in \R$.

The opposite is true because $E=(E^\star)^\star$.
\end{proof}

Now, by \eqref{chi weak convergence}, $\chi'_k \rightharpoonup \chi'_0$ in $L^\rho_{2\p}$ weakly because $\rho < s+1$.
Therefore, since $1/\Om_k \to 1/\Om_0$ uniformly, 
\[
\frac{\chi_k'}{\Om_k}\, \rightharpoonup
\frac{\chi_0'}{\Om_0} \quad \text{in} \ L^\rho_{2\p} \ \text{weakly},
\]
and so, by \eqref{sigma weak convergence}, the pairs $(\chi'_k/\Om_k, \s_k)$ converge to $(\chi'_0/\Om_0, \s_0)$ weakly in the product space $L^\rho_{2\p} \times L^\rho_{2\p}$.
We define
\[
F : L^\rho_{2\p} \times L^\rho_{2\p} \to \R,
\quad
F(u,v) := \intp \Om_0 \, E^\star(u,v) \, d\t.
\]
Since $E^\star$ is continuous and non-negative
(recall that $E \geq 0$ by (H\ref{hyp:E=0})),
by Fatou's Lemma $F$ is strongly lower semicontinuous on $L^\rho_{2\p} \times L^\rho_{2\p}$. 
Moreover, by (H\ref{hyp:E jointly convex}) and Lemma \ref{lemma:E* convex}, $F$ is also convex.
Hence $F$ is weakly lower semicontinuous on $L^\rho_{2\p} \times L^\rho_{2\p}$. It follows that
\[
\mE(w_0, \chi_0) = F\Big( \frac{\chi'_0}{\Om_0}\,,\s_0 \Big)
\leq \liminf_k F\Big( \frac{\chi'_k}{\Om_k}\,,\s_k \Big)
= \liminf_k \intp \Om_0 \, E^\star \Big( \frac{\chi'_k}{\Om_k}\,,\s_k \Big) \, d\t.
\]
We note that
\begin{align*}
\Big| F\Big( \frac{\chi'_k}{\Om_k}\,,\s_k \Big) - \mE(w_k,\chi_k) \Big|
& = \Big| \intp \frac{(\Om_0 - \Om_k)}{\Om_k} \, \Om_k \, E^\star \Big( \frac{\chi'_k}{\Om_k}\,,\s_k \Big) \, d\t \Big| \\
& \leq C \, \| \Om_0 - \Om_k \|_{L^\infty}
\, \to 0
\end{align*}
as $k \to \infty$ by \eqref{lower upper bound}, \eqref{elastic bound} and \eqref{Om uniform}.
Hence
\[
\liminf_k F\Big( \frac{\chi'_k}{\Om_k}\,,\s_k \Big)
= \liminf_k \mE(w_k,\chi_k).
\]
On the other hand, by the definition of the maximising sequence $(w_k,\chi_k)$,
\[
I_0(w_k) - \mE(w_k,\chi_k) = J_0(w_k, \chi_k) \to \Sigma,
\]
and, since $I_0(w_k) \to I_0(w_0)$,
\[
\mE(w_k,\chi_k) \to I_0(w_0) - \Sigma .
\]
Then
\begin{equation}  \label{auxiliary}
\mE(w_0, \chi_0)
\leq \liminf_k \mE(w_k,\chi_k)
= I_0(w_0) - \Sigma,
\end{equation}
that is, $J_0(w_0,\chi_0) \geq \Sigma$. Therefore
\[
J_0(w_0,\chi_0) = \Sigma
\]
and $(w_0,\chi_0)$ is a maximum for $J_0$ on $\mA_0 \times \mD$.

We have proved that, when (H\ref{hyp:exists E}-\ref{hyp:special}) hold, there exists a maximizer $(w_0, \chi_0)$ for problem \eqref{var problem}.
Since $J_0(0,1)=0$ and $J_0(w_0,\chi_0) = \Sigma>0$, the maximum is nontrivial. Moreover, we have also proved that
\[
w_0 \in \mA_0 \cap W^{2,\rho}_{2\p}, \quad
\chi_0 \in \mD \cap W^{1,s+1} (0,2\p),
\]
and the proof of part ($a$) of Theorem \ref{thm:all} is concluded.

\section{Euler equation and regularity of the solution}
\label{sec:Euler}
We next prove parts ($b$), ($c$) and ($d$) of Theorem \ref{thm:all}.
First, to see that $\chi_0'$ is bounded below, we note the following facts about $E^\star$.

\begin{lemma}  \label{lemma:aux}
Suppose that \emph{(H\ref{hyp:E even mu},\ref{hyp:E jointly convex},\ref{hyp:basic growth})} hold.
Then, for all $\s \in \R$,
\[
\lim_{t \to 0^+} E^\star_1(t,\s) = -\infty, \quad
\lim_{t \to +\infty} E^\star_1(t,\s) = +\infty.
\]
\end{lemma}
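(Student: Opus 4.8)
The plan is to work directly from the definition $E^\star(t,\s)=tE(1/t,\s/t)$ and express $E^\star_1$ in terms of $E$ and its partials, then exploit the growth hypothesis (H\ref{hyp:basic growth}) together with convexity (H\ref{hyp:E jointly convex}) to control the sign and the size of $E^\star_1$ in the two limits. Differentiating in $t$ (with $\s$ fixed), writing $\nu=1/t$ and $\mu=\s/t$, one gets
\[
E^\star_1(t,\s) = E\Big(\frac1t,\frac\s t\Big) + t\Big(-\frac1{t^2}\,E_1\Big(\frac1t,\frac\s t\Big) - \frac{\s}{t^2}\,E_2\Big(\frac1t,\frac\s t\Big)\Big)
= E(\nu,\mu) - \nu E_1(\nu,\mu) - \mu E_2(\nu,\mu),
\]
so that $E^\star_1(t,\s) = E(\nu,\mu) - \gr E(\nu,\mu)\cdot(\nu,\mu)$, with $(\nu,\mu)=(1/t,\s/t)$. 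Thus $t\to 0^+$ corresponds to $\nu\to+\infty$ and $t\to+\infty$ to $\nu\to 0^+$, in both cases with $\mu = \s\nu$ so $|\mu|$ moves proportionally to $\nu$.

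For the limit $t\to+\infty$: here $\nu\to0^+$ and $\mu\to0$, so $(\nu,\mu)\to(1,0)$ is false — rather $(\nu,\mu)\to(0,0)$, a boundary point. I would argue that $E^\star_1(t,\s)\to+\infty$ by a convexity/monotonicity argument: fix $\s$ and consider $g(t):=E^\star(t,\s)$. By Lemma \ref{lemma:E* convex}, $g$ is convex, hence $g'=E^\star_1(\cdot,\s)$ is nondecreasing, so $\lim_{t\to+\infty}E^\star_1(t,\s)$ exists in $(-\infty,+\infty]$. If it were finite, then $g(t)\le g(t_0)+C(t-t_0)$ would grow at most linearly; but $\mE$-type growth from (H\ref{hyp:basic growth}) gives $E(\nu,\mu)\ge K_0\nu^r - K_0'$ with $r>2$, hence $g(t)=tE(1/t,\s/t)\ge K_0 t^{1-r}-K_0' t$, which does not contradict linear growth as $t\to\infty$ — so instead I should use the $1/\nu^s$ term: $E(\nu,\mu)\ge K_0\nu^{-s}-K_0'$ gives $g(t)\ge K_0 t^{1+s} - K_0' t$, which grows superlinearly since $s>0$, contradicting the linear bound. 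Hence the limit is $+\infty$. Symmetrically, for $t\to0^+$ (i.e. $\nu\to+\infty$): again $g$ convex, so $g'$ nondecreasing and $\lim_{t\to0^+}g'(t)$ exists in $[-\infty,+\infty)$. If it were finite, then $g(t)$ would be bounded below by a linear function near $0$, i.e. $g(t)\ge -C$ near $0$; but $g(t)=tE(1/t,\s/t)$, and using $E(\nu,\mu)\ge K_0\nu^r-K_0'$ we only get $g(t)\ge K_0 t^{1-r}-K_0' t\to+\infty$, which is consistent with $g'\to$ finite from below only if $g$ is eventually increasing — I need the reverse. The correct route: $g$ convex with $g(t)\to+\infty$ as $t\to0^+$ forces $g'(t)\to-\infty$ as $t\to0^+$, because a convex function tending to $+\infty$ at the left endpoint of its domain must have derivative $\to-\infty$ there (otherwise $g(t)\ge g(t_1)+g'(t_1^+)(t-t_1)$ stays bounded as $t\downarrow0$). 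So the real content is just: (i) $E^\star_1$ is nondecreasing in $t$ by convexity; (ii) $E^\star(t,\s)\to+\infty$ as $t\to0^+$ and as $t\to+\infty$, from the two growth terms $\nu^r$ and $\nu^{-s}$ of (H\ref{hyp:basic growth}); (iii) a convex function blowing up at both ends of $(0,+\infty)$ has derivative $\to-\infty$ at the left end and $\to+\infty$ at the right end.

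The main obstacle is making step (iii) clean: one must verify that $E^\star(t,\s)$ genuinely tends to $+\infty$ at both ends \emph{for every fixed $\s$}, including the coupling $\mu=\s/t$ in the argument of $E$. As $t\to+\infty$, $E(1/t,\s/t)\ge K_0 t^{s} - K_0'$ (dropping the nonnegative $\nu^r$ and $|\mu|^p$ terms and using $\nu^{-s}=t^s$... wait, $1/\nu^s = t^s$), so $E^\star(t,\s)\ge K_0 t^{s+1}-K_0' t\to+\infty$; as $t\to0^+$, $E(1/t,\s/t)\ge K_0 t^{-r}-K_0'$, so $E^\star(t,\s)\ge K_0 t^{1-r} - K_0' t\to+\infty$ since $r>2>1$. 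Both are immediate, so in fact there is no serious obstacle; the proof is short. I would present it as: compute $E^\star_1(t,\s)=E(\nu,\mu)-\gr E(\nu,\mu)\cdot(\nu,\mu)$ for orientation (or skip this), then invoke convexity of $t\mapsto E^\star(t,\s)$ (Lemma \ref{lemma:E* convex}), the two elementary lower bounds on $E^\star$ from (H\ref{hyp:basic growth}), and the elementary fact about convex functions above to conclude.
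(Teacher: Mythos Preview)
Your proposal is correct and follows essentially the same approach as the paper: both arguments use the convexity of $t\mapsto E^\star(t,\s)$ (Lemma \ref{lemma:E* convex}) together with the blowup of $E^\star(t,\s)$ at both ends of $(0,+\infty)$, derived from (H\ref{hyp:basic growth}), to conclude that the derivative tends to $\mp\infty$. The only minor difference is that the paper first invokes evenness (H\ref{hyp:E even mu}) to reduce to the inequality $E^\star(t,\s)\ge E^\star(t,0)$ before applying (H\ref{hyp:basic growth}), whereas you apply the growth bound directly and uniformly in $\mu$; your route therefore shows that (H\ref{hyp:E even mu}) is not actually needed for this lemma.
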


\begin{proof}
By (H\ref{hyp:E even mu},\ref{hyp:E jointly convex}) and Lemma \ref{lemma:E* convex}, $E^\star(t,\s)$ is even in $\s$ and jointly convex, therefore
\begin{equation}  \label{sure sign}
E^\star(t,\s) \geq E^\star(t,0) \quad \forall t>0, \ \s \in \R.
\end{equation}
Since $E^\star(t,0) = t E(1/t,0)$,
by (H\ref{hyp:basic growth}) we have that
\[
\lim_{t \to 0^+} E^\star(t,0) = + \infty,
\quad
\lim_{t \to +\infty} \frac{E^\star(t,0)}{t}\,  = + \infty.
\]
Then, by \eqref{sure sign},
\[
\lim_{t \to 0^+} E^\star(t,\s) = + \infty,
\quad
\lim_{t \to +\infty} \frac{E^\star(t,\s)}{t}\,  = + \infty
\]
for all $\s \in \R$. The lemma easily follows by the convexity of the map $t \mapsto E^\star(t,\s)$, for every fixed $\s$.
\end{proof}

We have proved that $E_1^\star(t,\s) \to -\infty$ as $t \to 0^+$.
(H\ref{hyp:uniform infty}) is equivalent to assuming that such a limit is uniform in $\s$.
Indeed, we observe that, for every $\nu$,
\[
\inf_{\mu \in \R} \big( \gr E(\nu,\mu) \cdot (\nu,\mu) - E(\nu,\mu) \big)
= \inf_{\s \in \R} \big( \gr E(\nu,\s\nu) \cdot (\nu,\s\nu) - E(\nu,\s\nu) \big).
\]
Hence (H\ref{hyp:uniform infty}) holds if and only if
\[
\lim_{t \to 0^+} \{ \sup_{\s \in \R} E^\star_1(t,\s) \}
= -\infty,
\]
that is
\begin{equation}  \label{comoda}
E^\star_1(t,\s) \leq f(t) \quad \forall t>0, \ \s \in \R,
\quad \text{and} \quad
\lim_{t \to 0^+} f(t) = -\infty,
\end{equation}
where $f(t) := \sup_{\s} E^\star_1(t,\s)$.

As a consequence of \eqref{comoda}, there exists a positive constant $\a^*$ such that
$f(t) < 0$ for all $t \in (0,\a^*)$, therefore
\begin{equation}  \label{old alpha*}
E^\star_1(t,\s) < 0 \quad \forall t \in (0,\alpha^*)\,, \ \s \in \R.
\end{equation}

Note that the map $t \mapsto E^\star_1(t,\s)$ is strictly increasing. Then, by Lemma \ref{lemma:aux}, for every $\s$ there exists a unique $t^\star(\s)>0$ such that $E^\star_1(t^\star(\s),\s)=0$, and $t^\star(0)=1$.
By \eqref{old alpha*}, $t^\star$ is bounded below, namely $t^\star(\s) \geq \alpha^*$ for all $\s$.

\begin{lemma}  \label{lemma:chi bound below}
Let $(w_0,\chi_0)$ be the maximum for $J_0$ described in part ($a$) of Theorem \ref{thm:all}.
Suppose that \emph{(H\ref{hyp:exists E},\ref{hyp:E jointly convex},\ref{hyp:uniform infty})} hold.
Then there exists a constant $C$ such that
\[
\chi'_0(\t) \geq C >0 \quad \text{for a.e. } \t.
\]
\end{lemma}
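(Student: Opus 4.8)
The plan is a variational comparison by contradiction. Suppose $\chi_0'$ were not bounded below by a positive constant. Since $\Om_0:=\Om(w_0)$ is continuous, $2\p$-periodic and, by \eqref{Om uniform}, bounded below away from $0$ (and above), writing $t_0:=\chi_0'/\Om_0$ this means $|B_\e|>0$ for every $\e>0$, where $B_\e:=\{\t\in[0,2\p]:t_0(\t)<\e\}$. For $\e$ small I would build a competitor $\hat\chi\in\mD$ from $\chi_0$ by raising $t_0$ to the level $\e$ on $B_\e$ — which lowers $\mE$, because $t\mapsto E^\star(t,\s)$ decreases steeply near $t=0$ uniformly in $\s$, by Lemma \ref{lemma:aux} and \eqref{comoda} — and shedding the resulting excess in the constraint $\intp\chi'\,d\t=2\p$ by a slight decrease of $t_0$ on a fixed, well-behaved set. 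Since $I_0(w_0)$ is unaffected, a net decrease of $\mE$ would contradict $J_0(w_0,\chi_0)=\Sigma$.

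The first ingredient is a good set. From $\intp\Om_0 t_0\,d\t=\chi_0(2\p)-\chi_0(0)=2\p$ the set $\{t_0>0\}$ has positive measure, so there is $\eta>0$ with $|\{t_0>\eta\}|>0$; and since $t_0$ and $\s_0:=\s(w_0)$ are finite a.e., there is $N>\eta$ with $|G|>0$ for $G:=\{\t:\eta\le t_0(\t)\le N,\ |\s_0(\t)|\le N\}$. As $E^\star$ is $C^2$ on $\{t>0\}\times\R$, its derivative $E^\star_1$ is bounded, say by $M_G$, on the compact set $[\eta/2,N]\times[-N,N]$. Now fix $\e\in(0,\eta)$, put $\delta_\e:=\int_{B_\e}\Om_0(\e-t_0)\,d\t>0$ and $\theta_\e:=\delta_\e/\int_G\Om_0 t_0\,d\t$; since $\Om_0$ is bounded and $\int_G\Om_0 t_0\,d\t>0$ is fixed, $\theta_\e\to0$ as $\e\to0$. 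Define $\hat t:=\e$ on $B_\e$, $\hat t:=(1-\theta_\e)t_0$ on $G$, $\hat t:=t_0$ otherwise ($B_\e$ and $G$ are disjoint), and $\hat\chi(\t):=\int_0^\t\Om_0\hat t\,ds$; by the choice of $\theta_\e$ one has $\hat\chi(2\p)=2\p$, and $\hat\chi\in\mD$. The difference $\mE(w_0,\hat\chi)-\mE(w_0,\chi_0)$ splits into an integral over $B_\e$ and one over $G$. On $B_\e$ the integrand is $\Om_0\int_{t_0}^\e E^\star_1(\tau,\s_0)\,d\tau$, which by \eqref{comoda} and the monotonicity of $t\mapsto E^\star_1(t,\s)$ is $\le\Om_0 f(\e)(\e-t_0)$, for a contribution $\le f(\e)\delta_\e$. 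On $G$, provided $\e$ is small enough that $\theta_\e<\tfrac12$ (so $(1-\theta_\e)t_0$ stays in $[\eta/2,N]$), the integrand is $\le\Om_0 M_G\theta_\e t_0$ in absolute value, for a contribution $\le M_G\theta_\e\int_G\Om_0 t_0\,d\t=M_G\delta_\e$. Hence $\mE(w_0,\hat\chi)-\mE(w_0,\chi_0)\le(f(\e)+M_G)\delta_\e<0$ once $f(\e)<-M_G$, which holds for $\e$ small since $f(\e)\to-\infty$; this contradicts maximality. Therefore $t_0\ge\e_0$ a.e.\ for some $\e_0>0$, and $\chi_0'=\Om_0 t_0\ge C>0$ a.e.

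I expect the crux to be exactly the handling of the equality constraint $\intp\chi'\,d\t=2\p$. Raising $t_0$ near $0$ necessarily overshoots the constraint, and that excess must be removed somewhere; but decreasing $t_0$ is not cheap where the curvature $\s_0$ is large, and the hypotheses of this lemma give no control of $E^\star_1$ there. The resolution is to confine the compensating decrease to the fixed set $G$ on which $t_0$ and $\s_0$ are both bounded, so that its cost is at most $M_G$ times the excess with $M_G$ independent of $\e$, while the gain from raising $t_0$ up to level $\e$ on $B_\e$ carries the coefficient $|f(\e)|$, which blows up as $\e\to0$. Trading a fixed cost against an unbounded gain is what makes the two-sided estimate close.
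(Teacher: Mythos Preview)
Your proof is correct and follows essentially the same strategy as the paper's: both construct a competitor by raising $\chi_0'$ (equivalently $t_0=\chi_0'/\Om_0$) on the bad set where it is small, compensate the mean constraint on a fixed set where both $t_0$ and $\s_0$ are bounded, and then use \eqref{comoda} to trade an unbounded gain against a bounded cost. The only cosmetic differences are that the paper works with $\chi_0'$ directly and compensates additively (subtracting a constant $\lambda_n$), whereas you normalise by $\Om_0$ and compensate multiplicatively via $(1-\theta_\e)t_0$; the final comparison is phrased as a ratio $a_n/|b_n|\to\infty$ in the paper versus your $(f(\e)+M_G)\delta_\e<0$, but these are equivalent.
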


\begin{proof}
Suppose that $1/\chi'_0$ does not belong to $L^\infty$.
Then all the sets
\[
A_n := \Big\{ \t \in (0,2\p) : \, \chi'_0(\t) \leq \frac{1}{n} \Big\},
\quad n \in \N,
\]
have positive Lebesgue measure denoted by $|A_n|>0$.
Note also that
\[
\{ \t \in (0,2\p) : \, \chi'_0(\t) \geq 1 \}
\]
has positive measure --- if not, then $\chi'_0 <1$ almost everywhere, whence
\[
\chi_0(2\p) - \chi_0(0) = \intp \chi'_0(\t)\,d\t < 2\p,
\]
violating the fact that $\chi_0 \in \mD$. 
Since $\sigma_0$ and $\chi_0'$ are integrable we can therefore choose $N$ large enough that
\[
B:=\{ 1 \leq \chi'_0 < N, \ |\s_0|<N \}
\]
has positive measure.
Then, for every $n$ we define $\ph_n$ by
\begin{align*}
\ph_n(\t) := \begin{cases}
2/n & \text{if } \t \in A_n, \\
\chi'_0(\t)-\lm_n & \text{if } \t \in B, \\
\chi'_0(\t) & \text{everywhere else},
\end{cases}
\end{align*}
where
\[
\lm_n := \frac{1}{|B|} \int_{A_n} \Big( \frac{2}{n}\, - \chi'_0 \Big) \, d\t ,
\]
so that
\[
\intp \ph_n(\t) \, d\t = 2\p
\]
for all $n$. Also, we note that
\begin{equation}  \label{lmn}
\frac{1}{|B|}\,\frac{|A_n|}{n}\, \,\leq\, \lm_n \,\leq\,
\frac{2}{|B|}\,\frac{|A_n|}{n} 
\end{equation}
for all $n$. We define
\[
\widetilde{\chi}_n(\t) := \int_0^\t \ph_n(\tilde\t)\,d\tilde\t
\]
and observe that $\widetilde{\chi}_n \in \mD$ for all $n$ sufficiently large.
We calculate the difference
\[
\mE(w_0,\chi_0) - \mE(w_0,\tchi_n)
 = \intp \Om_0 \, \Big\{ E^\star \Big( \frac{\chi'_0}{\Om_0}\,,\s_0 \Big)
- E^\star\Big( \frac{\ph_n}{\Om_0}\,,\s_0 \Big) \Big\}\,d\t
= a_n + b_n,
\]
where
\[
a_n := \int_{A_n} \Om_0\,\Big\{ E^\star\Big( \frac{\chi'_0}{\Om_0}\,,\s_0\Big)
- E^\star\Big( \frac{2}{\Om_0\,n}\,,\s_0 \Big) \Big\}\,d\t
\]
and
\[
b_n:= \int_B \Om_0\,\Big\{ E^\star\Big( \frac{\chi'_0}{\Om_0}\,,\s_0\Big)
- E^\star\Big( \frac{\chi'_0 - \lm_n}{\Om_0}\,,\s_0 \Big) \Big\}\,d\t.
\]
Since $\Om_0(\t) \geq C > 0$ for all $\t$,
\[
\frac{\chi'_0(\t)}{\Om_0(\t)}\, \leq \frac{1}{\Om_0(\t)\,n}\,
< \frac{2}{\Om_0(\t)\,n} \leq \frac{C'}{n} < \alpha^*
\quad \forall \t \in A_n,
\]
for all $n$ sufficiently large, where $\alpha^*$ is defined in
\eqref{old alpha*}, using (H\ref{hyp:uniform infty}).
Then, by \eqref{old alpha*} and the fact that $E^\star_{11}>0$,
\begin{align*}
E^\star\Big( \frac{\chi'_0}{\Om_0}\,,\s_0\Big)
- E^\star\Big( \frac{2}{\Om_0\,n}\,,\s_0 \Big)
& > - E^\star_1 \Big( \frac{C'}{n}\,, \s_0 \Big)
\, \Big( \frac{2}{\Om_0\,n} - \frac{\chi'_0}{\Om_0} \Big) \\
& \geq - f \Big( \frac{C'}{n} \Big) \, \frac{1}{\Om_0\,n}
\quad \forall \t \in A_n,
\end{align*}
where $f$ is defined in \eqref{comoda}.
Hence
\[
a_n > - f \Big( \frac{C'}{n} \Big) \, \frac{|A_n|}{n}\,.
\]

To estimate $b_n$, we observe that $\chi'_0/\Om_0$ and $(\chi'_0-\lm_n)/\Om_0$ are confined in a compact interval $K$ which does not contain zero, for all $\t \in B$, for all $n$ sufficiently large.
Hence we define
\[
M:= \max_{t \in K, \ |\s| \leq N} \,|E^\star_1(t,\s)|,
\]
and so
\[
|b_n| \leq \int_B M \lm_n \, d\t = |B|\,M \lm_n.
\]
Then, by \eqref{lmn},
\[
\frac{a_n}{|b_n|}\, > - \frac{f(C'/n)}{2 \,M} \, \to +\infty
\]
as $n \to \infty$ by \eqref{comoda}. This implies that
\[
a_n + b_n > 0
\]
for $n$ sufficiently large, so that $\mE(w_0,\tchi_n) < \mE(w_0,\chi_0)$, violating the maximality of $(w_0,\chi_0)$ for $J_0$.
\end{proof}

\subsection{The Euler equation for $\chi_0$}

Now we prove part ($c$) of Theorem \ref{thm:all}.
By \eqref{auxiliary}, $\mE(w_0,\chi_0) < \infty$, therefore, 
by (H\ref{hyp:basic growth}) and the fact that $0<C\leq \Om_0 \leq C'$, 
we know that
\[
\intp \frac{|\s_0|^p}{(\chi'_0)^{p-1}}\, d\t < \infty.
\]
Since $\chi'_0$ is bounded below (Lemma \ref{lemma:chi bound below}),
\[
\intp \Big( \frac{|\s_0|}{\chi'_0} \Big)^p \, d\t
\leq \Big\| \frac{1}{\chi'_0} \Big\|_\infty \,
\intp \frac{|\s_0|^p}{(\chi'_0)^{p-1}}\, d\t < \infty,
\]
that is $\s_0 / \chi'_0$ belongs to $L^p_{2\p}$.
Moreover, recall that
\[
\chi_0' \in L^{s+1}_{2\p}, \quad
E\Big( \frac{\Om_0}{\chi'_0}\,, \frac{\Om_0 \s_0}{\chi'_0} \Big) \in L^1_{2\p}.
\]

To study the differentiability of the functional $J_0(w,\chi)$ with respect to $\chi$, we assume (H\ref{hyp:E1 growth control},\ref{hyp:E2 growth control}).
(H\ref{hyp:E1 growth control}) implies that
\[
E_{1,0} :=
E_1 \Big( \frac{\Om_0}{\chi'_0} \,, \frac{\Om_0\,\s_0}{\chi'_0} \Big)
\in L^1_{2\p},
\]
and (H\ref{hyp:E2 growth control}) implies that
\[
E_{2,0} :=
E_2 \Big( \frac{\Om_0}{\chi'_0} \,, \frac{\Om_0\,\s_0}{\chi'_0} \Big)
\in L^{p'}_{2\p},
\]
where $1/p + 1/p' = 1$, because $\Om_0 \,\s_0 / \chi'_0 \in L^p_{2\p}$,
$\chi_0'/\Om_0 \in L^{s+1}_{2\p}$ and $\Om_0/\chi'_0 \in L^\infty_{2\p}$.
As a consequence, the functional $\mE(w,\chi)$ is Gateaux-differentiable with respect to $\chi$, and its partial derivative in any direction $\psi \in W^{1,\infty}_{2\p}$, with $\chi_0 + \psi \in \mD$, at $(w_0,\chi_0)$ is
\begin{multline*}
d_\chi \mE(w_0,\chi_0) \, \psi
\\ = \intp \Big\{
E\Big( \frac{\Om_0}{\chi'_0}\,, \frac{\Om_0\,\s_0}{\chi'_0} \Big)
 -
\gr E\Big( \frac{\Om_0}{\chi'_0}\,, \frac{\Om_0\,\s_0}{\chi'_0} \Big)
\cdot
\Big( \frac{\Om_0}{\chi'_0}\,, \frac{\Om_0\,\s_0}{\chi'_0} \Big)
 \Big\} \, \psi' \, d\t ,
\end{multline*}
where
\[
\gr E(\nu,\mu) \cdot (\nu,\mu) = \nu \, E_1(\nu,\mu) + \mu\,E_2(\nu,\mu).
\]
Then the maximiser $(w_0,\chi_0)$ satisfies the Euler-Lagrange equation for the functional $J_0(w,\chi)$ with respect to $\chi$, that is
\begin{equation} \label{Euler chi}
E\Big( \frac{\Om_0}{\chi'_0}\,, \frac{\Om_0\,\s_0}{\chi'_0} \Big)
 -
\gr E\Big( \frac{\Om_0}{\chi'_0}\,, \frac{\Om_0\,\s_0}{\chi'_0} \Big)
\cdot
\Big( \frac{\Om_0}{\chi'_0}\,, \frac{\Om_0\,\s_0}{\chi'_0} \Big)
\equiv \g_0,
\end{equation}
for some constant $\g_0 \in \R$.
Note that \eqref{Euler chi} is equation (1.2) of \cite{Toland-steady}.

We have proved that, if $(w_0,\chi_0)$ is the maximum of $J_0$ described in part ($a$) of Theorem \ref{thm:all}, with $\chi'_0(\t) \geq C > 0$, and (H\ref{hyp:basic growth},\ref{hyp:E1 growth control},\ref{hyp:E2 growth control}) hold, then $(w_0,\chi_0)$ solves the Euler-Lagrange equation \eqref{Euler chi}. Thus part ($c$) of Theorem \ref{thm:all} is proved.

\subsection{The Euler equation for $w_0$}
\label{subsec:Euler w}

We prove part ($d$) of Theorem \ref{thm:all}.
The maps
\[
W^{1,\rho}_{2\p} \to \R,
\quad w \mapsto \intp w \mC w' \, d\t \quad \text{and}
\quad w \mapsto \intp w^2 \mC w' \, d\t
\]
are Fr\'echet differentiable at $w_0$.
Hence $I_0$ is differentiable on $W^{1,\rho}_{2\p}$, and its differential at $w_0$ in the direction $h \in W^{1,\rho}_{2\p}$ is
\begin{equation*}  
d I_0(w_0) h = \intp \gr I_0 \, h \, d\t,
\end{equation*}
where
\begin{equation}  \label{grad I0}
\gr I_0 := \gr I_0(w_0) =
( c^2 -2 g a_0) \mC w_0'
- g \big\{ w_0 (1+\mC w_0') + \mC (w_0 w_0') \big\}
\end{equation}
and
\begin{equation}  \label{def alpha 0}
a_0 := - [w_0 \mC w_0'].
\end{equation}
In the following, we will denote
\[
H^{k,\rho}_0:= \{ w \in W^{k,\rho}_{2\p} \,: \ [w]=0 \},
\quad k=1,2.
\]
To investigate the differentiability (at least in the Gateaux sense)
of the elastic energy term $\mE(w,\chi)$ with respect to $w$,
we recall the following fact.
\begin{lemma} \label{lemma:neighbourhood}
$w_0$ is an interior point of $\mA_0$ in the topology of $W^{2,\rho}_{2\p}$, that is there exists $\e_0 = \e_0(w_0) >0$ such that
\[
\mW_0 := \{ w \in H^{2,\rho}_{0}\,:  \ \|w-w_0\|_{W^{2,\rho}_{2\p}} < \e_0 \} \subset \mA_0,
\]
and there exist constants $C,C'$ such that
\[
0 < C \leq \Om(w) \leq C'
\quad \forall w \in \mW_0.
\]
\end{lemma}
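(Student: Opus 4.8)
The plan is to show that the two defining conditions of $\mA_0$ beyond membership in $\mH^{1,1}_\R$ — namely $\log\Om(w)\in\mH^{1,1}_\R$ together with $[\log\Om(w)]=0$, given $[w]=0$ — are stable under small $W^{2,\rho}_{2\p}$ perturbations of $w_0$. First I would observe that $W^{2,\rho}_{2\p}\hookrightarrow W^{1,\infty}_{2\p}$ since $\rho>1$, by the Sobolev embedding in one dimension; hence if $\|w-w_0\|_{W^{2,\rho}_{2\p}}<\e_0$ then $w'$ and (by boundedness of $\mC$ on $W^{1,\rho}_{2\p}$, applied after noting $\mC w'$ has the same $W^{1,\rho}$ bound as $w'$, then embedding again) $\mC w'$ are within $O(\e_0)$ of $w_0'$ and $\mC w_0'$ in $L^\infty_{2\p}$.

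The key step is the lower bound on $\Om(w)$. Since $w_0\in\mA_0$, the ``Remark on $\mE(w,\chi)$'' tells us $\log\Om(w_0)$ is absolutely continuous and periodic, so there are $0<a_0\le \Om(w_0)\le b_0$ on $[0,2\p]$. Because $\Om(w)(\t)=\sqrt{(1+\mC w'(\t))^2+w'(\t)^2}$ is a Lipschitz function of the pair $(1+\mC w'(\t),w'(\t))\in\R^2$ away from the origin, and the uniform $L^\infty$ closeness above forces $(1+\mC w'(\t),w'(\t))$ to stay within $O(\e_0)$ of $(1+\mC w_0'(\t),w_0'(\t))$ — which itself stays at distance $\ge a_0$ from the origin — we get, after choosing $\e_0$ small (say $\e_0$ so that the $L^\infty$ perturbation of $(\mC w', w')$ is $<a_0/2$), the uniform two-sided bound $0<C:=a_0/2\le\Om(w)\le C':=b_0+1$ for all $w\in\mW_0$. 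This proves the displayed inequality.

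It remains to check $w\in\mA_0$, i.e.\ $w\in\mH^{1,1}_\R$, $\log\Om(w)\in\mH^{1,1}_\R$, and $[\log\Om(w)]=0$. Membership in $\mH^{1,1}_\R$ is immediate from $w\in W^{2,\rho}_{2\p}\subset W^{1,1}_{2\p}$ (periodic, absolutely continuous, with $w'$ and $\mC w'$ in $L^1_{2\p}$), and $[w]=0$ is built into the definition of $\mW_0$ via $H^{2,\rho}_0$. For $\log\Om(w)$: from the two-sided bound just obtained, $\Om(w)$ is bounded away from $0$ and $\infty$; moreover $\Om(w)'=\frac{(1+\mC w')\mC w'' + w' w''}{\Om(w)}$, and since $w''\in L^\rho_{2\p}$, $w',\mC w'\in L^\infty_{2\p}$ (hence $\mC w''\in L^\rho_{2\p}$), and $1/\Om(w)\in L^\infty_{2\p}$, we get $\Om(w)'\in L^\rho_{2\p}\subset L^1_{2\p}$; therefore $(\log\Om(w))'=\Om(w)'/\Om(w)\in L^1_{2\p}$, and $\mC((\log\Om(w))')\in L^1_{2\p}$ because $\mC$ is bounded on $L^\rho_{2\p}$ with $\rho>1$ and $(\log\Om(w))'\in L^\rho_{2\p}$. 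Hence $\log\Om(w)\in\mH^{1,1}_\R$. Finally, for the zero-mean condition, I would use that the map $w\mapsto[\log\Om(w)]$ is continuous from $\mW_0$ (with the $W^{2,\rho}_{2\p}$ topology) to $\R$ — this follows because $\log\Om(w)\to\log\Om(w_0)$ in $L^\infty_{2\p}$ under the $W^{2,\rho}$ convergence, using the uniform bounds and the local Lipschitz continuity of $\log$ on $[C,C']$ — and, as noted in the excerpt, the condition $[\log\Om(w)]=0$ is not automatic but is part of the definition of $\mA_0$.

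Here there is a genuine point to address, and it is the main obstacle: a priori $[\log\Om(w)]$ need not vanish merely because it vanishes at $w_0$ and varies continuously. The resolution is that $[\log\Om(w)]=0$ is in fact an identity valid for \emph{all} $w\in\mH^{1,1}_\R$ with $[w]=0$ for which $\log\Om(w)\in\mH^{1,1}_\R$: it is the standard normalization coming from the complex formulation (see the remark following the definition of $\mA_0$ in the excerpt, and \cite{Toland-steady}), reflecting that $1+\mC w'-iw'$ is the boundary value of a holomorphic function on the disc with value $1$ at the origin, whence $\frac1{2\p}\intp\log|1+\mC w'-iw'|\,d\t=\log|1|=0$ by the mean value property of the harmonic function $\log|{\cdot}|$ of that holomorphic function (which, by the lower bound $\Om(w)\ge C>0$, is zero-free on the disc). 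Thus once the lower bound $\Om(w)\ge C>0$ is established — which is exactly the quantitative content of the lemma — the condition $[\log\Om(w)]=0$ follows for free, and so $w\in\mA_0$. Choosing $\e_0$ as above completes the proof.
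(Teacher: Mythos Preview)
Your argument is sound through the two-sided bound on $\Om(w)$ and the verification that $\log\Om(w)\in\mH^{1,1}_\R$; the paper itself gives no self-contained proof here, merely citing \cite{Toland-steady}, Lemma~4.1.

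The gap is in your justification of $[\log\Om(w)]=0$. You assert that the boundary lower bound $\Om(w)=|W^*|\ge C>0$ forces the holomorphic extension $W$ of $1+\mC w'-iw'$ to be zero-free on the open disc, and then invoke the mean-value property of $\log|W|$. But a lower bound for $|W|$ on the boundary circle does not preclude zeros in the interior. Take $w(\t)=-2\cos\t\in H^{2,\rho}_0$: then $w'=2\sin\t$, $\mC w'=-2\cos\t$, and $1+\mC w'-iw'=1-2e^{i\t}$, whose holomorphic extension is $W(z)=1-2z$. Here $|W^*|=|1-2e^{i\t}|\ge 1$ everywhere on the circle, yet $W(1/2)=0$; Jensen's formula gives $[\log\Om(w)]=\log 2\ne 0$. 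So the implication you rely on is false, and the lower bound on $\Om(w)$ alone cannot deliver the zero-mean condition.

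The repair uses $w_0\in\mA_0$ in an essential way, not merely that $w$ is close to it. Since $[\log\Om(w_0)]=0=\log|W_0(0)|$, Jensen's formula forces $W_0$ to be zero-free in $D$; combined with continuity of $W_0$ on $\overline D$ and the lower bound on $|W_0^*|=\Om(w_0)$, this gives $|W_0|\ge\delta>0$ on all of $\overline D$. For $w\in\mW_0$ the boundary values satisfy $\|W^*-W_0^*\|_\infty=O(\e_0)$ (you already established this), so by the maximum principle $\|W-W_0\|_{L^\infty(\overline D)}<\delta$ once $\e_0$ is small enough. Hence $W$ is zero-free, $\log|W|$ is harmonic on $D$, and the mean-value property now legitimately yields $[\log\Om(w)]=\log|W(0)|=0$. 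With this correction your argument is complete.
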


\begin{proof}
See \cite{Toland-steady}, Lemma 4.1.
\end{proof}

The map
\[
\Om : \mW_0 \to W^{1,\rho}_{2\p} \,,
\quad
w \mapsto \Om(w)
\]
is of class $C^1$, and its differential at $w$ in the direction $h$ is
\[
d \Om(w) h = \frac{ w' h' + (1+\mC w') \mC h'}{ \Om(w) }
\quad \forall h \in H^{2,\rho}_0.
\]
Also the map
\[
\Om \s :  \mW_0 \to L^\rho_{2\p} \,,
\quad
w \mapsto  \Om(w) \s(w)
\]
is of class $C^1$, and
\[
d (\Om(w) \s(w)) h = - \mC \Big( \frac{d \Om(w)h}{\Om(w)} \Big)'
= -\mC \Big( \frac{ w' h' + (1+\mC w') \mC h'}{ \Om(w)^2 } \Big)',
\]
because, by definition, $\Om(w) \s(w) = \Th(w)'$ and $\Theta(w) = - \mC \log \Om(w)$.
We define
\[
\mL(u) := \frac{ w_0' u + (1+\mC w_0')\,\mC u}{ \Om_0^2} \,,
\]
so that
\[
d \Om(w_0) h = \Om_0 \, \mL(h'),
\quad
d (\Om(w_0) \s(w_0)) h = - \, \mC (\mL(h'))'.
\]
Note that, in general, $\mL(h')$ belongs to $W^{1,\rho}_{2\p}$ only, even for $h \in C^\infty$.

The functional $\mE(w,\chi)$ is Gateaux-differentiable with respect to $w$ at $(w_0,\chi_0)$ in the direction $h$ if
\[
E_{1,0} \, \mL(h') - E_{2,0} \, \mC (\mL(h'))' \in L^1_{2\p}.
\]
Since $\mL(h') \in W^{1,\rho}_{2\p}$ and $E_{1,0} \in L^1_{2\p}$,
this holds when
\begin{equation}  \label{when E2}
E_{2,0} \in L^{\rho'}_{2\p},
\end{equation}
where
\[
\rho' = \frac{p(s+1)}{s(p-1)}
\]
is the conjugate exponent of $\rho=(p+sp)/(p+s)$. Now
\eqref{when E2} holds if we assume (H\ref{hyp:new magic}).
Indeed, recalling that $\Om_0$ is bounded both below and above,
(H\ref{hyp:new magic}) and the Euler equation \eqref{Euler chi} imply that, where $|\s_0|/\chi_0'$ is larger than some constant (depending on the constant $\g_0$ of \eqref{Euler chi}), then
\[
\Big( \frac{|\s_0|}{\chi'_0} \Big)^p
\leq C (\chi'_0)^s,
\]
for some $C>0$. Hence
\[
\frac{\s_0}{\chi'_0}\, \in L^{\frac{p(s+1)}{s}}
\]
because $\chi'_0 \in L^{s+1}$.
Then \eqref{when E2} follows by (H\ref{hyp:E2 growth control}), and, as a consequence, the functional $\mE(w,\chi)$ is Gateaux-differentiable at $(w_0,\chi_0)$ with respect to $w$ in all directions $h \in W^{2,\rho}_{2\p}$.
Hence the maximiser $(w_0,\chi_0)$ solves the Euler equation in weak form
\begin{multline}  \label{weak Euler}
\intp \big\{ \gr I_0\, h
- E_{1,0} \Om_0 \, \mL(h')
+ E_{2,0} \, \mC \mL(h')' \big\} \, d\t = 0
\quad \forall h \in H^{2,\rho}_0.
\end{multline}

\begin{lemma}  \label{lemma:inv mL}
The linear operator $\mL$ is an isomorphism of $H^{1,\rho}_{0}$ into itself, and
\[
\mL(u) = v \quad \text{iff} \quad u = w_0' v - (1+\mC w_0') \, \mC v =: \mL^{-1}(v),
\]
for all $u,v \in H^{1,\rho}_{0}$.
\end{lemma}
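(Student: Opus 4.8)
The plan is to recognise $\mL$ as (essentially) the real part of multiplication by a nonvanishing function of the disc algebra, and then to read off invertibility and the explicit inverse from elementary Hardy-space theory on the unit disc; this is the viewpoint used in \cite{Toland-steady,Sha-Tol}, from which part of the statement could in fact be quoted. The first step is to isolate the holomorphic factor hidden in $\Om_0^2 = (1+\mC w_0')^2 + (w_0')^2$: since $\mC(1+\mC w_0') = -w_0'$ (because $[w_0']=0$), the function $\Psi := (1+\mC w_0') - i\,w_0'$ has the form $h + i\,\mC h$ with $h := 1+\mC w_0' \in W^{1,\rho}_{2\p}$, so it is the boundary trace of a function, still written $\Psi$, holomorphic on the disc, with $\Psi(0)=1$ and $|\Psi|=\Om_0$ on the circle. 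The essential point here is that $\Psi$ has no zero in the disc and that $1/\Psi$ is again a bounded holomorphic function whose trace lies in $W^{1,\rho}_{2\p}$: this uses that $w_0\in\mA_0$ supplies $\log\Om_0\in\mH^{1,1}_\R$ with $[\log\Om_0]=0$, together with the two-sided bound $0<C\le\Om_0\le C'$ from Lemma \ref{lemma:neighbourhood}, via Jensen's formula (equivalently, $\log\Psi = \log\Om_0 - i\,\Th(w_0) = \log\Om_0 + i\,\mC\log\Om_0$ is itself a holomorphic boundary trace, so $\Psi$ is the exponential of one).

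Next I would rewrite the operator in terms of $\Psi$. For real $u$ with $[u]=0$ set $\hat u := u+i\,\mC u$, the trace of a holomorphic $U$ with $U(0)=0$. A short computation gives $\Om_0^2\,\mL(u) = w_0'u+(1+\mC w_0')\mC u = \Im(\overline\Psi\,\hat u)$, so, dividing by $|\Psi|^2 = \Psi\overline\Psi$, one obtains the equivalence $\mL(u)=v \iff \Im(\hat u/\Psi) = v$. This reduces the lemma to the Riemann--Hilbert problem: given real $v\in H^{1,\rho}_0$, find the holomorphic $W$ on the disc with $W(0)=0$ and $\Im(W|_\partial)=v$; its unique solution has trace $W|_\partial = -\mC v + i\,v = i(v+i\,\mC v)$. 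Then $U := \Psi\,W$ is holomorphic with $U(0)=0$, hence its trace has the form $u+i\,\mC u$ with $[u]=0$, and taking the real part of $\big((1+\mC w_0')-i\,w_0'\big)(iv-\mC v)$ gives exactly $u = w_0'v-(1+\mC w_0')\,\mC v$. Uniqueness of $u$ with $\mL(u)=v$ is immediate: if $\mL(u_1)=\mL(u_2)$ then $(\hat u_1-\hat u_2)/\Psi$ is holomorphic, vanishes at the origin, and has zero imaginary boundary part, hence vanishes identically.

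It then remains to check boundedness in the right space. Because $\rho>1$ we have $W^{1,\rho}_{2\p}\hookrightarrow C_{2\p}$, so $W^{1,\rho}_{2\p}$ is a Banach algebra; together with the boundedness of $\mC$ on $W^{1,\rho}_{2\p}$ and with $1/\Om_0^2\in W^{1,\rho}_{2\p}\cap L^\infty$ (a smooth function of $\Om_0^2\in W^{1,\rho}_{2\p}$, bounded away from zero), this shows that both $\mL$ and the candidate inverse $v\mapsto w_0'v-(1+\mC w_0')\,\mC v$ map $W^{1,\rho}_{2\p}$ boundedly into itself, while the zero-mean normalisation is preserved throughout because the functions produced above are traces of holomorphic functions vanishing at the origin. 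Hence $\mL$ is a bounded bijection of $H^{1,\rho}_0$ onto itself with the stated bounded inverse. The one genuinely non-formal step, which I expect to be the main obstacle, is the non-vanishing of $\Psi$ in the disc and the attendant bound on $1/\Psi$: this is precisely where the constraint $[\log\Om(w_0)]=0$ built into $\mA_0$ and the a priori bounds on $\Om_0$ are indispensable, and it is also what legitimises the Riemann--Hilbert reformulation; everything else is routine manipulation with the Hilbert transform.
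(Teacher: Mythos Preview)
Your proof is correct and follows essentially the same route as the paper's: both recognise $\mL$ as the real (or imaginary) part of division by a fixed nonvanishing holomorphic factor on the disc, and read off the inverse from the corresponding multiplication. The paper works with $W$ defined by $W^* = w_0' + i(1+\mC w_0')$, $W(0)=i$, and writes $\mL u = \Re(U/W)^*$; your $\Psi = (1+\mC w_0') - i w_0'$ satisfies $W = i\Psi$, so your identity $\mL u = \Im(\hat u/\Psi)$ is literally the same equation. The only notable difference is in the bookkeeping for regularity: the paper invokes Hardy-space facts (in particular Duren's criterion that $U' \in \mH^1_\CC$ iff $U^*$ is absolutely continuous) to place $(U/W)'$ in $\mH^\rho_\CC$, whereas you exploit that $W^{1,\rho}_{2\p}$ is a Banach algebra stable under $\mC$ and under composition with smooth functions bounded away from singularities; both arguments yield the same conclusion. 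Your explicit identification $\Psi = \exp(\log\Om_0 + i\,\mC\log\Om_0)$, using $[\log\Om_0]=0$, is exactly the mechanism behind the paper's terser remark that ``$W$ and $1/W$ are bounded on $\overline D$'' and that $W'$ has no zeros because $[\log|W'^*|]=\log|W'(0)|$.
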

\begin{proof} 
The proof is elementary once the following facts from complex function theory are taken into account. 
For $p>0$, write  $U\in \mH^p_{\CC}$ if $U$ is holomorphic in the unit disc $D$ and
\[
\sup_{r \in (0,1)} \int_0^{2\pi}|U(re^{i\tau})|^p \, d\tau < \infty.
\] 
It is well known \cite{duren, garnett} that when $U \in \mH^p_{\CC}$ for any $p>0$, then
\[
U^*(t):=\lim_{r\nearrow 1 }U(re^{it}) \ \text{ exists for a.e.}\ t
\]
and if, for some  $q \in (0,\infty)$,
$|U^*|^q \in L^1_{2\pi}$, then $U\in\mH^q_{\CC}$;  if $|U^*| \in L^\infty_{2\pi}$, then  $U$ is bounded on $D$.
 Moreover, if $U \in \mH^p_{\CC}$, $p>0$, then $U \in \mH^q_{\CC}$ for some $q>1$ if and only if   $U^*(t) = u(t)+i\mC u(t) + i\alpha$ for some $u \in L^q_{2\pi}$ where
\[
\frac{1}{2\pi}\int_0^{2\pi} u(\tau) \, d\tau + i \alpha = U(0).
\] 
Conversely, if $u \in L^q_{2\pi}$, $q>1$, there exists $U \in \mH^q_{\CC}$ with $U^* = u+i\mC u$.

Finally (\cite{duren}, Theorem 3.11),  
if $U$ is holomorphic on $D$ and continuous on $\overline D$, then $U^*$ is absolutely continuous if and only if $U' \in \mH^1_\CC$, in which case
\[
\frac{d~}{d\tau}\, U^*(\tau) = i e^{i\tau} (U')^*(\tau).
\] 
Moreover, if $U' \in \mH^1_\CC$ and $[\log {U'}^*]= \log |U'(0)|$, then $U'$ has no zeros in $D$.

Now we turn to our proof. 
Since $w'_0 \in H^{1,\rho}_{0}$, there exists a function $W$ which is holomorphic in the unit disc $D$,
continuous on the closed unit disc $\overline D$, $W(0) = i$,  
$W' \in \mH^\rho_\CC$ and  
\begin{equation}  \label{def W}
W^* = w_0'+i(1+\mC w_0'), 
\,\quad 
|W^*| = \Omega(w_0).
\end{equation}
Since $w_0 \in \mA$, it follows that $W$ and $1/W$ are bounded on  $\overline D$.

Now for $u \in H^{1,\rho}_0$, let $U$ be holomorphic on $D$ and continuous on $\overline D$ such that $U' \in \mH^\rho_\CC$ and $U^* = u+i \mC u$. Note that $\mL u = \Re (U/W)^*$. It follows from the above remarks that $(U/W)'\in \mH^\rho_\CC$. 
Hence $\mL u$ is absolutely continuous and $(\mL u)'\in L^\rho_{2\pi}$. Also, since $W(0)\neq 0$, it follows that  $[\mL u] =0$ if and only if $U(0) =0$, i.e. if and only if $[u]=0$. 
If, on the other hand, $0=\mL u = \Re (U/W)^*$, then $U \equiv 0$ and hence $\mL$ is injective from $H^{1,\rho}_{0}$ to itself. 

Finally for any $v \in H^{1,\rho}_{0}$ let $V$ be holomorphic on $D$ and continuous on $\overline D$ such that $V' \in \mH^\rho_\CC$ and $V^* = v+i \mC v$. 
Then $V(0)=0$ and $\mL u = v$ if and only if $U = WV$ on $D$. 
Thus $U' \in \mH^\rho_\C$ and $U(0)=0$. 
In other words, $u \in H^{1,\rho}_0$ and  
$u = w_0' v - (1+\mC w_0') \, \mC v $. 
This completes the proof.
\end{proof}

\begin{remark} \label{rem:obstacle}
In Lemma \ref{lemma:inv mL} we have proved that $\mL$ is an isomorphism of $H^{1,\rho}_{0}$.
However, $h \in H^{2,\rho}_{0}$ does not imply that $\mL(h)\in H^{2,\rho}_{0}$, 
because $w_0'$ has only regularity $W^{1,\rho}$.
This means that, in the present problem, $\mL(h)$ cannot be taken as test function,  as was done in \cite{Toland-steady}.
So instead, here we will take $\mL(h') \in H^{1,\rho}_0$ as test function ``of lower order''.
\end{remark}

Now we seek an expression for  \eqref{weak Euler} that involves only $\mL(h')$ as test function.
First, we note that, for every $h \in H^{2,\rho}_0$,
\[
\intp \gr I_0 \, h \, d\t = \intp (\gr I_0 - \lm_0)\, h \, d\t,
\]
where $\lm_0 := [\gr I_0]$.
Integrating by parts yields
\begin{align*}
\intp (\gr I_0 - \lm_0) \, h \, d\t
& = - \intp \Big( \int_0^\t (\gr I_0 - \lm_0) \Big) h'(\t) \, d\t \\
& = - \intp m_0\, \mL(h') \, d\t ,
\end{align*}
where
\[
m_0 := (\mL^{-1})^* \Big( \int_0^\t (\gr I_0 - \lm_0) \Big)
\]
and $(\mL^{-1})^*$ is the adjoint operator of $\mL^{-1}$ in the usual $L^2_{2\p}$ sense, 
\[
(\mL^{-1})^*(f) = w_0' f + \mC \big( (1+\mC w_0') f \big)
\quad \forall f.
\]
Now, for any $h \in H^{2,\rho}_0$, let
\[
\ph := \mL(h').
\]
Since $[h']=0$, by Lemma \ref{lemma:inv mL}, $\ph \in H^{1,\rho}_0$.
We observe that
\[
\big\{ \ph = \mL(h'): \ h \in H^{2,\rho}_0 \big\}
= H^{1,\rho}_0.
\]
Indeed, given any $\ph \in H^{1,\rho}_0$, there exists a unique primitive $h$ of $\mL^{-1}(\ph)$ having zero mean.
As a consequence, \eqref{weak Euler} can be written as
\begin{equation}  \label{weak Euler ph}
\intp ( m_0 + \Om_0 E_{1,0} ) \, \ph \, d\t
\, + \, \intp \mC E_{2,0} \, \ph' \, d\t
\, = \, 0
\qquad \forall \ph \in H^{1,\rho}_0,
\end{equation}
where $\mC (E_{2,0})$ is well defined, by \eqref{when E2}.

\begin{lemma}  \label{lemma:weak ode}
Suppose that $a(\t)\in L^{\rho'}_{2\p}$, $b(\t) \in L^1_{2\p}$ satisfy
\[
\intp b \ph \, d\t \, + \intp a \ph' \, d\t \, = \, 0
\quad \forall \ph \in H^{1,\rho}_0.
\]
Then $a(\t) \in W^{1,1}_{2\p}$, and
\[
a(\t) = \text{const.} + \int_0^\t \big( b(t) - [b] \big) \, dt.
\]
\end{lemma}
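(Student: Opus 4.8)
The statement is the $2\p$-periodic, $L^\rho$--$L^{\rho'}$ version of the classical du~Bois-Reymond lemma, so the plan is the standard ``integrate by parts, then use a density/duality argument''. First I would introduce the candidate primitive
\[
B(\t):=\int_0^\t\big(b(t)-[b]\big)\,dt .
\]
Since $\intp\big(b-[b]\big)\,d\t=0$, the function $B$ is absolutely continuous \emph{and} $2\p$-periodic, hence bounded, and $B'=b-[b]\in L^1_{2\p}$, so $B\in W^{1,1}_{2\p}$. Subtracting $[b]$ is precisely what makes $B$ periodic, which is what will kill the boundary terms in the integration by parts.

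Next I would rewrite the hypothesis. Every $\ph\in H^{1,\rho}_0$ has $[\ph]=0$, so $\intp b\ph\,d\t=\intp\big(b-[b]\big)\ph\,d\t=\intp B'\ph\,d\t$. Both $B$ and $\ph$ are bounded and absolutely continuous on $[0,2\p]$ and periodic (recall $\rho>1$, so $W^{1,\rho}_{2\p}\hookrightarrow C$, and also $L^\rho_{2\p}\subset L^1_{2\p}$), hence $B\ph$ is absolutely continuous and periodic; integrating $(B\ph)'=B'\ph+B\ph'$ over a period gives $\intp B'\ph\,d\t=-\intp B\ph'\,d\t$ (the terms $B'\ph$ and $B\ph'$ lie in $L^1_{2\p}$ because $B'\in L^1_{2\p}$, $\ph\in L^\infty_{2\p}$, $B\in L^\infty_{2\p}$, $\ph'\in L^\rho_{2\p}$). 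Substituting into the assumed identity turns it into
\[
\intp\big(a-B\big)\,\ph'\,d\t=0\qquad\forall\,\ph\in H^{1,\rho}_0 .
\]

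Then I would identify the set of admissible test derivatives as $\{\ph':\ \ph\in H^{1,\rho}_0\}=\{g\in L^\rho_{2\p}:\ [g]=0\}$: given $g\in L^\rho_{2\p}$ with $[g]=0$, its primitive $\int_0^\t g$ is $2\p$-periodic and lies in $W^{1,\rho}_{2\p}$, and subtracting its mean gives $\ph\in H^{1,\rho}_0$ with $\ph'=g$. Hence $\intp\big(a-B\big)g\,d\t=0$ for every mean-zero $g\in L^\rho_{2\p}$; applying this to $g=f-[f]$ for arbitrary $f\in L^\rho_{2\p}$ and using $\intp\big(a-B\big)[f]\,d\t=2\p[f]\,[a-B]$ yields $\intp\big(a-B-[a-B]\big)f\,d\t=0$ for \emph{all} $f\in L^\rho_{2\p}$. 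Since $a\in L^{\rho'}_{2\p}\subset L^1_{2\p}$ and $B$ is bounded, $h:=a-B-[a-B]$ is an $L^1_{2\p}$ function annihilating all of $L^\rho_{2\p}$; testing against $f=\mathrm{sign}(h)\,\mathbf 1_{\{|h|\le N\}}\in L^\infty_{2\p}\subset L^\rho_{2\p}$ and letting $N\to\infty$ forces $h=0$ a.e. Therefore $a-B$ is a.e.\ equal to a constant, i.e.\ $a(\t)=\text{const.}+\int_0^\t\big(b(t)-[b]\big)\,dt$, and in particular $a\in W^{1,1}_{2\p}$.

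I do not expect a genuine obstacle here: the only points that require a little care are the periodicity of $B$ (arranged by subtracting $[b]$), the product rule / integration by parts for the low-regularity pair $(B,\ph)$, and the final annihilation argument — all routine once one records that $\rho>1$, so that $W^{1,\rho}_{2\p}$ embeds in $C$ and $L^\rho_{2\p}\subset L^1_{2\p}$, and that $B$ is bounded.
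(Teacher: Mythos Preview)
Your proof is correct and is exactly the standard du~Bois-Reymond argument the authors have in mind; the paper itself does not spell out the proof, merely recording that ``the proof is elementary.'' The care you take over periodicity of $B$, the integration by parts for the pair $(B,\ph)$, and the final annihilation step are precisely the small checks needed to justify that remark.
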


\begin{proof}
The proof is elementary.
\end{proof}

By Lemma \ref{lemma:weak ode} and \eqref{weak Euler ph} we deduce the Euler equation
\begin{equation}  \label{Euler}
\mC E_{2,0} (\t) = \text{const.} + \int_0^\t ( m_0 + \Om_0 E_{1,0} - b_0 ) \, dt,
\end{equation}
where $b_0 := [m_0 + \Om_0 E_{1,0}]$.

We have proved that, if the maximizer $(w_0,\chi_0)$ of $J_0$ described in part ($a$) of Theorem \ref{thm:all} satisfies the Euler equation \eqref{Euler chi}, with $\chi'_0(\t) \geq C > 0$, and (H\ref{hyp:E2 growth control},\ref{hyp:new magic}) hold, then $(w_0,\chi_0)$ also solves the Euler equation \eqref{Euler}. In Section \ref{sec:RH}, this will be shown to imply the dynamic boundary condition \eqref{psi dynamics}. But first we examine the smoothness of solutions.

\subsection{Regularity of the solution}
\label{subsec:regularity}

We prove part ($e$) of Theorem \ref{thm:all}.
From \eqref{Euler} it follows that $\mC E_{2,0} \in W^{1,1}_{2\p} \subset L^\infty \subset L^\b_{2\p}$ for all $\b \in (1,\infty)$. Hence
\begin{equation}  \label{E2 L beta} 
E_{2,0} \in L^\b \quad \forall \b \in (1,\infty).
\end{equation}
In  (H\ref{hyp:E1 growth control}-\ref{hyp:E2 >}),
let $\bar\nu_4 := \min \{ \bar\nu_1, \bar\nu_2, \bar\nu_3, \bar\nu_{\g_0} \}$ and $\bar\mu_4 := \max \{ \bar\mu_1, \bar\mu_2, \bar\mu_3, \bar\mu_{\g_0} \}$,
where $\g_0$ is the constant in the Euler equation \eqref{Euler chi}, and let
\[
A^* := \Big\{ \t\in (0,2\p) : \
\frac{\Om_0}{\chi'_0}\, \leq \bar\nu_4,
\quad
\frac{\Om_0 |\s_0|}{\chi'_0}\, \geq \bar\mu_4 \Big\}.
\]
By (H\ref{hyp:new magic},\ref{hyp:E2 >}), 
\begin{align*}
\int_{A^*} |E_{2,0}|^\b \, d\t &
\geq C \int_{A^*} \Big( \frac{\Om_0}{\chi'_0} \Big)^{\a\b}\,
\Big( \frac{\Om_0 |\s_0|}{\chi'_0} \Big)^{(p-1)\b}\,d\t  \\
& \geq C' \int_{A^*} (\chi'_0)^{\e\b}\,d\t.
\end{align*}
Note that $\chi'_0$ is bounded above on $(0,2\p) \setminus A^*$ by (H\ref{hyp:new  magic}) and Euler equation \eqref{Euler chi}.
Hence, by \eqref{E2 L beta}, $\chi'_0 \in L^\b_{2\p}$ for all $\b \in (1,\infty)$, and therefore 
\[
\frac{\s_0}{\chi'_0} \in L^\b_{2\p} \quad \forall \b \in (1,\infty),
\]
by (H\ref{hyp:new  magic}). 
Thus, by (H\ref{hyp:E1 growth control}),
\[
|E_{1,0}| \leq C \Big\{ (\chi'_0)^{s+1} + \Big( \frac{|\s_0|}{\chi'_0} \Big)^p \Big\}
\in L^\b_{2\p} \quad \forall \b \in (1,\infty),
\]
for some $C$. Since $m_0 \in W^{1,\rho}_{2\p} \subset L^\infty_{2\p}$, by \eqref{Euler} it follows that
$\mC E_{2,0} \in W^{1,\b}_{2\p}$ for all $\b$, therefore
\begin{equation} \label{E2,0 W1beta}
E_{2,0} \in W^{1,\b}_{2\p} \quad \forall \b \in (1,\infty).
\end{equation}
In particular, $E_{2,0} \in L^\infty_{2\p}$.
Hence, by (H\ref{hyp:new magic},\ref{hyp:E2 >}), 
\[
\| E_{2,0} \|_\infty \geq C \frac{1}{(\chi'_0)^\a}\,
\Big( \frac{|\s_0|}{\chi'_0}\Big)^{p-1}
\geq C' (\chi'_0)^\e , \quad  \text{on }A^*.
\]
Thus
\[
\chi'_0 \in L^\infty_{2\p}.
\]
By (H\ref{hyp:new magic}), also $\s_0/\chi'_0 \in L^\infty_{2\p}$, therefore
\[
\s_0 \in L^\infty_{2\p}.
\]
By the definition of $E^\star(t,\s)$, the Euler equation \eqref{Euler chi} for $\chi$ can be written as
\begin{equation}  \label{Euler* chi}
E^\star_1 \Big( \frac{\chi'_0}{\Om_0}\,, \s_0 \Big) \equiv \g_0.
\end{equation}
By (H\ref{hyp:E jointly convex}), the map $t \mapsto E^\star_1(t,\s)$ is strictly increasing. 
Hence, by Lemma \ref{lemma:aux}, it follows that for every $\s \in \R$ there exists a unique $t>0$ such that $E^\star_1(t,\s) = \g_0$. 
In other words, there is defined a function $\vp : \R \to (0,+\infty)$ such that
\begin{equation}  \label{def vp}
E^\star_1 (\vp(\s),\s) = \g_0
\quad \forall \s \in \R.
\end{equation}
Since $E(\nu,\mu)$ is of class $C^2$ (H\ref{hyp:exists E}), $\vp$ is of class $C^1$ by the implicit function theorem, and
\begin{equation}  \label{vp'}
\vp'(\s) = - \frac{E^\star_{12}(\vp(\s),\s)}{E^\star_{11}(\vp(\s),\s)}\,.
\end{equation}
We rewrite \eqref{Euler* chi} as
\begin{equation}  \label{Euler vp chi}
\frac{\chi'_0}{\Om_0}\, = \vp(\s_0),
\end{equation}
therefore
\begin{equation}  \label{E*2 vp}
E_{2,0}
= E_2 \Big( \frac{\Om_0}{\chi'_0}\,,  \frac{\Om_0 \s_0}{\chi'_0} \Big)
= E_2 \Big( \frac{1}{\vp(\s_0)}\,, \frac{\s_0}{\vp(\s_0)} \Big)
= E^\star_2 (\vp(\s_0), \s_0)
\end{equation}
by the definition of $E_{2,0}$ and $E^\star$.
Now we consider the map
\[
\psi : \R \to \R,
\quad
y \mapsto \psi(y) := E^\star_2 (\vp(y),y).
\]
$\psi$ is differentiable and
\[
\psi'(y) =
\frac{E^\star_{11}(\vp(y),y) \, E^\star_{22}(\vp(y),y) - (E^\star_{12}(\vp(y),y))^2}{E^\star_{11}(\vp(y),y)}
\]
by \eqref{vp'}. Thus, by (H\ref{hyp:E jointly convex}) and  Lemma \ref{lemma:E* convex}, $\psi$ is strictly increasing, and therefore invertible.
Hence \eqref{E*2 vp} can be written as
\begin{equation}  \label{sigma0 = psi -1 (E2,0)}
\s_0 = \psi^{-1} (E_{2,0}) .
\end{equation}
By \eqref{E2,0 W1beta}, $\psi^{-1} (E_{2,0})$ is differentiable, and
\[
\partial_\t \{ \psi^{-1} ( E_{2,0}(\t) ) \}
= \frac{1}{ \psi'(\s_0(\t))} \, (E_{2,0})'(\t),
\]
for almost all $\t$.
Since $\psi'$ is positive and continuous, and $|\s_0(\t)| \leq \| \s_0 \|_{L^\infty}$ for almost all $\t$,
\[
\psi'(\s_0) \geq C > 0 \quad \text{a.e.},
\]
for some constant $C$.
Therefore, by \eqref{E2,0 W1beta}, $\psi^{-1}(E_{2,0})$ belongs to $W^{1,\b}_{2\p}$ for all $\b$, that is,
\begin{equation}  \label{sigma0 W1beta}
\s_0 \in W^{1,\b}_{2\p}
\quad \forall \b \in (1,\infty).
\end{equation}
By the usual sequence of (non-trivial) implications, one can then prove that
\[
\Om_0 \in W^{2,\b}_{2\p}, \quad
w_0 \in W^{3,\b}_{2\p} \quad \forall \b \in (1,\infty).
\]
By the fact that
\begin{equation}  \label{compact comfort}
0 < C \leq \vp(\s_0) = \frac{\chi'_0}{\Om_0}\, \leq C'
\end{equation}
and by \eqref{sigma0 W1beta} and \eqref{vp'}, it follows that $\vp(\s_0) \in W^{1,\b}_{2\p}$ for all $\b$. Then by \eqref{Euler vp chi}
\[
\chi_0 \in W^{2,\b}_{2\p}   \quad \forall \b \in (1,\infty).
\]
Hence $E_{1,0}$ is differentiable, and, by \eqref{compact comfort}, \eqref{sigma0 W1beta} and the continuity of the second derivatives $E_{11}$ and $E_{12}$, we have proved that
\[
E_{1,0} \in W^{1,\b}_{2\p}   \quad \forall \b \in (1,\infty).
\]
Next, the fact that $w_0 \in W^{3,\b}_{2\p}$ implies that
\[
m_0 \in W^{2,\b}_{2\p}  \quad \forall \b \in (1,\infty)
\]
by the definition of $m_0$ and $\gr I_0$.
Then, by \eqref{Euler},
\begin{equation}  \label{E2,0 W2beta}
E_{2,0} \in W^{2,\b}_{2\p}  \quad \forall \b \in (1,\infty).
\end{equation}
In particular, $(E_{2,0})'$ is bounded.
Since $\s'_0 = (E_{2,0})' / \psi'(\s_0)$, we conclude that
\[
\s_0 \in W^{1,\infty}_{2\p}.
\]
Hence, by \eqref{compact comfort}, 
\[
\frac{\Om_0}{\chi'_0}\, \in W^{1,\infty}_{2\p}.
\]
Moreover, since $\Om_0$ and $1/\Om_0$ belong to $W^{1,\infty}_{2\p}$, also
\[
\frac{\chi'_0}{\Om_0}\,, \ \chi'_0, \ \frac{1}{\chi'_0}\, \in W^{1,\infty}_{2\p}.
\]

Differentiating \eqref{Euler chi} with respect to $\t$ (which is possible now because $\Om_0/\chi'_0$ and $\s_0$ are differentiable) yields
\begin{equation}  \label{old Q trick}
(E_{1,0})' + \s_0 (E_{2,0})' = 0,
\end{equation}
which is \eqref{palp}.
By \eqref{E2,0 W2beta} and \eqref{sigma0 W1beta},
\eqref{old Q trick} implies that
\[
E_{1,0} \in W^{2,\b}_{2\p}.
\]

We have proved that, if $E(\nu,\mu)$ is of class $C^2$, then the curvature $\s_0$ and the stretch $\Om_0/\chi'_0$ of the membrane belong to $W^{1,\infty}_{2\p}$.
By bootstrap, when $E(\nu,\mu)$ enjoys more regularity, $\s_0$ and $\Om_0/\chi'_0$ are also more regular, as the following result shows.

\begin{lemma} \label{lemma:bootstrap} Suppose that 
\emph{(H\ref{hyp:exists E},\ref{hyp:E even mu},\ref{hyp:E jointly convex},\ref{hyp:basic growth},\ref{hyp:E1 growth control},\ref{hyp:new magic},\ref{hyp:E2 >})} hold, and let $(w_0,\chi_0)$ satisfy Euler equations \eqref{Euler chi} and \eqref{Euler}.
Suppose that $E(\nu,\s)$ is of class $C^k$, $k \geq 2$.
Then 
\[
w_0 \in W^{k+1,\b}_{2\p}, \quad \chi_0 \in W^{k,\infty}_{2\p},
\quad \s_0 \in W^{k-1,\infty}_{2\p},
\quad
\frac{\Om_0}{\chi'_0} \in W^{k-1,\infty}_{2\p},
\]
\[
E_{1,0} \in W^{k,\b}_{2\p},
\quad
E_{2,0} \in W^{k,\b}_{2\p},
\]
for all $\b \in (1,\infty)$.
\end{lemma}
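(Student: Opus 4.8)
The plan is to run a bootstrap argument that upgrades the regularity one derivative at a time, starting from the base case $k=2$ which has already been established in Subsection \ref{subsec:regularity}. The logical skeleton is the chain of implications already used there: the Euler equation \eqref{Euler} controls $\mC E_{2,0}$ in terms of $m_0$ and $\Om_0 E_{1,0}$; the relation \eqref{sigma0 = psi -1 (E2,0)}, i.e. $\s_0 = \psi^{-1}(E_{2,0})$, transfers regularity of $E_{2,0}$ to $\s_0$ because $\psi'$ is positive, continuous, and bounded below along the bounded trajectory $\s_0$; the relation \eqref{Euler vp chi}, $\chi_0'/\Om_0 = \vp(\s_0)$, transfers regularity of $\s_0$ to $\chi_0'$ via the $C^{k-1}$ map $\vp$; from $\s_0$ and $\Om_0/\chi_0'$ one recovers $w_0$ through the usual (nontrivial) implications involving the Hilbert transform and the definitions of $\Om$ and $\Th$; and finally \eqref{E*2 vp} and \eqref{old Q trick} feed the improved regularity back into $E_{1,0}$ and $E_{2,0}$, closing the loop at the next order.

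First I would set up the induction hypothesis: assume that for some $j$ with $2 \le j \le k$ we have $\s_0 \in W^{j-1,\infty}_{2\p}$, $\Om_0/\chi_0' \in W^{j-1,\infty}_{2\p}$, $\chi_0 \in W^{j-1,\infty}_{2\p}$ (equivalently $\chi_0' \in W^{j-2,\infty}_{2\p}$, with $W^{0,\infty}$ read as $L^\infty$), $w_0 \in W^{j,\b}_{2\p}$ and $E_{1,0}, E_{2,0} \in W^{j-1,\b}_{2\p}$ for all $\b$; Subsection \ref{subsec:regularity} is exactly this with $j=2$ (there one in fact gets $\s_0 \in W^{1,\infty}$ and $E_{i,0}\in W^{2,\b}$, so the base case is even slightly stronger than needed). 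For the inductive step, since $E$ is $C^k$ and $j \le k$, the maps $\vp$ and $\psi$ are $C^{k-1}\supseteq C^{j-1}$, so $\vp \circ \s_0$ and $E_{1,0} = E^\star_1(\vp(\s_0),\s_0)$, $E_{2,0}=E^\star_2(\vp(\s_0),\s_0)$ are compositions of $C^{j-1}$ functions with $\s_0 \in W^{j-1,\infty}$; using that all arguments stay in a fixed compact set away from the boundary of the domain (by \eqref{compact comfort} and the $L^\infty$ bounds on $\Om_0$, $\s_0$), the chain rule gives $E_{1,0}, E_{2,0}, \vp(\s_0) \in W^{j-1,\infty}_{2\p}$. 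Then $m_0 \in W^{j,\b}_{2\p}$ from $w_0 \in W^{j,\b}_{2\p}$ and the definition of $m_0$, $\gr I_0$; plugging into \eqref{Euler} gives $\mC E_{2,0} \in W^{j,\b}_{2\p}$, hence $E_{2,0}\in W^{j,\b}_{2\p}$ for all $\b$, hence (via $\s_0 = \psi^{-1}(E_{2,0})$ and $\psi'(\s_0)\ge C>0$) $\s_0 \in W^{j,\b}_{2\p}$, and since $(E_{2,0})'$ is then bounded, $\s_0 \in W^{j,\infty}_{2\p}$ exactly as in the base case. From $\s_0 \in W^{j,\infty}$ one gets $\Om_0/\chi_0' = 1/\vp(\s_0) \in W^{j,\infty}_{2\p}$, then $\chi_0'/\Om_0, \chi_0', 1/\chi_0' \in W^{j,\infty}_{2\p}$ using $\Om_0 \in W^{j,\infty}$ (which comes from $w_0 \in W^{j+1,\b}$, obtained as below), so $\chi_0 \in W^{j,\infty}_{2\p}$; and differentiating \eqref{Euler chi} yields \eqref{old Q trick}, $(E_{1,0})' = -\s_0 (E_{2,0})'$, whence $E_{1,0} \in W^{j,\b}_{2\p}$ for all $\b$. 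The step $\s_0 \in W^{j,\infty}$, $\Om_0/\chi_0' \in W^{j,\infty} \Rightarrow w_0 \in W^{j+1,\b}_{2\p}$ is the "usual sequence of nontrivial implications": from $\s_0$ and the stretch one recovers $\Theta_0' = \Om_0\s_0/(\chi_0'/\Om_0)$-type data, integrates to get $\Theta_0$, applies $\mC$ (bounded on $L^\b_{2\p}$) to get $\log \Om_0$ hence $\Om_0 \in W^{j,\b}$, and then $w_0' = \Om_0 \sin\Theta_0$ with $[w_0]=0$ gives $w_0 \in W^{j+1,\b}_{2\p}$; here the Hilbert-transform bounds $\mC : L^\b_{2\p} \to L^\b_{2\p}$ and $W^{k,\b}_{2\p}\hookrightarrow L^\infty$ for $\b>1$ do the bookkeeping. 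Iterating from $j=2$ up to $j=k$ yields the claimed $w_0 \in W^{k+1,\b}_{2\p}$, $\chi_0 \in W^{k,\infty}_{2\p}$, $\s_0 \in W^{k-1,\infty}_{2\p}$, $\Om_0/\chi_0' \in W^{k-1,\infty}_{2\p}$, $E_{1,0}, E_{2,0} \in W^{k,\b}_{2\p}$.

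The main obstacle is bookkeeping the exact number of derivatives at each stage: the map $\s_0 = \psi^{-1}(E_{2,0})$ and the composition $E_{i,0} = E^\star_i(\vp(\s_0),\s_0)$ only preserve $C^{k-1}$-worth of regularity (one less than $E$), so one must check carefully that the loop still closes, i.e. that the loss incurred in going around once is compensated by the gain from the Euler equation \eqref{Euler}, which integrates $m_0 + \Om_0 E_{1,0}$ and hence buys back one derivative on $E_{2,0}$. The other delicate point, already handled in the base case and reused verbatim here, is that $\s_0$, $\Om_0$, $\chi_0'/\Om_0$ all stay in a fixed compact subinterval of $(0,\infty)\times\R$, so that $\psi'(\s_0)$, $E^\star_{11}(\vp(\s_0),\s_0)$ etc. are bounded below by a positive constant and all chain-rule and inverse-function-theorem manipulations are legitimate; the $L^\infty$ bounds from Subsection \ref{subsec:regularity} and \eqref{compact comfort} guarantee this at every stage of the induction, since each stage only improves integrability/differentiability and never degrades the two-sided bounds.
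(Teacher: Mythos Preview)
Your approach is the same bootstrap as the paper's, and the overall logical skeleton is right. However, your induction hypothesis is mis-indexed in a way that actually breaks the step.

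You take as hypothesis at level $j$ the data $w_0 \in W^{j,\b}_{2\p}$ and $E_{i,0} \in W^{j-1,\b}_{2\p}$; but the base case in Subsection~\ref{subsec:regularity} already gives the stronger $w_0 \in W^{3,\b}_{2\p}$, $E_{i,0} \in W^{2,\b}_{2\p}$ when $E\in C^2$, i.e.\ the \emph{full} lemma at $k=2$. The paper takes exactly this as the induction hypothesis: the lemma at $k$, with $w_0 \in W^{k+1,\b}_{2\p}$ and $E_{i,0}\in W^{k,\b}_{2\p}$. With your weaker hypothesis two claims fail. First, ``$m_0 \in W^{j,\b}_{2\p}$ from $w_0 \in W^{j,\b}_{2\p}$'' is off by one: since $m_0 = (\mL^{-1})^*\bigl(\int_0^\t(\gr I_0-\lm_0)\bigr)$ and $(\mL^{-1})^*$ multiplies by $w_0'\in W^{j-1,\b}_{2\p}$, you only get $m_0\in W^{j-1,\b}_{2\p}$. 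Second, and more seriously, from $E_{2,0}\in W^{j,\b}_{2\p}$ your inference ``since $(E_{2,0})'$ is then bounded, $\s_0\in W^{j,\infty}_{2\p}$ exactly as in the base case'' only yields $\s_0'\in L^\infty$, i.e.\ $\s_0\in W^{1,\infty}_{2\p}$, not $W^{j,\infty}_{2\p}$. The highest derivative $E_{2,0}^{(j)}$ is only in $L^\b$, so Fa\`a di Bruno for $\s_0=\psi^{-1}(E_{2,0})$ gives $\s_0^{(j)}\in L^\b$, not $L^\infty$; you gain nothing on $\s_0$ in $L^\infty$ and the loop does not close.

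The paper's remedy is precisely to carry the stronger hypothesis: from $w_0\in W^{k+1,\b}_{2\p}$ one gets $m_0,\Om_0\in W^{k,\b}_{2\p}$, and together with $E_{1,0}\in W^{k,\b}_{2\p}$ the Euler equation \eqref{Euler} yields $E_{2,0}\in W^{k+1,\b}_{2\p}$, so that $E_{2,0}^{(k)}\in W^{1,\b}_{2\p}\subset L^\infty$. Then, with $E\in C^{k+1}$ so that $\psi,\vp\in C^k$, Fa\`a di Bruno gives $\s_0\in W^{k,\infty}_{2\p}$ directly, and the rest follows as you wrote. If you simply replace your hypothesis at $j$ by the lemma's conclusion at $j$ (which your base case already supplies), your argument becomes the paper's and goes through.
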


\begin{proof}
We have already proved the case $k=2$.
By induction, suppose that the statement holds for all $j = 2, \ldots, k$, and let $E(\nu,\mu) \in C^{k+1}$.
Hence $\Om_0$ and $m_0$ belong to $W^{k,\b}_{2\p}$, because $w_0 \in W^{k+1,\b}_{2\p}$. Then, by the Euler equation \eqref{Euler} for $w_0$,
\begin{equation}  \label{induz.1}
E_{2,0} \in W^{k+1,\b}_{2\p}
\end{equation}
for all $\b$.
Recalling the definition of $\vp$ and $\psi$, we note that, since $E \in C^{k+1}$, both $\vp$ and $\psi$ are of class $C^k$.
Then, with a direct calculation, one can see that the $k$th derivative of $\psi^{-1}(E_{2,0})$ with respect to $\t$ is a finite sum of terms, each of which is a quotient where the numerator is a polynomial involving the partial derivatives of $E$ at $(\Om_0/\chi'_0$, $\Om_0 \s_0/\chi'_0)$ of order $\leq (k+1)$, and the denominator is an integer power of $\psi'(\s_0)$.
Since $E \in C^{k+1}$, by \eqref{compact comfort} and the fact that $\s_0$ is bounded it follows that the $k$th derivative of $\psi^{-1}(E_{2,0})$ is bounded. Thus, by \eqref{sigma0 = psi -1 (E2,0)},
\begin{equation}  \label{induz.2}
\s_0 \in W^{k,\infty}_{2\p}.
\end{equation}
Using \eqref{old Q trick}, \eqref{induz.1} and \eqref{induz.2} imply that
\[
E_{1,0} \in W^{k+1,\b}_{2\p}
\]
for all $\b$.

Next, with a direct calculation, one can see that the $k$th derivative of $\vp(\s_0)$ with respect to $\t$ is a polynomial involving the derivatives of $\vp$ at $\s_0$ of order $\leq k$, and the derivatives of $\s_0$ at $\t$ of order $\leq k$. 
Since $\vp \in C^k$, and by \eqref{induz.2}, the $k$th derivative of $\vp(\s_0)$ is bounded. 
By the Euler equation \eqref{Euler vp chi} for $\chi_0$,
\[
\frac{\Om_0}{\chi'_0} \in W^{k,\infty}_{2\p}.
\]
Moreover, since $\Om_0/\chi'_0 \geq C > 0$ for some constant $C$, also
\begin{equation}  \label{induz.5}
\frac{\chi'_0}{\Om_0} \in W^{k,\infty}_{2\p}.
\end{equation}

By \eqref{induz.2}, with the usual sequence of implications, it follows that
\[
\Om_0 \in W^{k+1,\b}_{2\p}, \quad
w_0 \in W^{k+2,\b}_{2\p}
\]
for all $\b$. Hence $\Om_0 \in W^{k,\infty}_{2\p}$, therefore, by \eqref{induz.5},
\[
\chi'_0 = \frac{\chi'_0}{\Om_0}\, \Om_0 \in W^{k,\infty}_{2\p},
\]
and the proof is complete.
\end{proof}

\begin{remark}
\emph{As an obvious consequence of Lemma \ref{lemma:bootstrap}, if $E(\nu,\s) \in C^\infty$, then
$w_0$,
$\chi_0$,
$\s_0$,
$\Om_0/\chi'_0$,
$E_{1,0}$ and
$E_{2,0}$
are also $C^\infty$.}
\end{remark}

By using the inverse diffeomorphism $\chi_0^{-1}$,
and recalling \eqref{x=chi(tau)}, \eqref{coord nu} and \eqref{coord sigma},
the same regularity result holds for the stretch
\[
\nu_0(x) := \frac{\Om_0(\chi_0^{-1}(x))}{\chi'_0(\chi_0^{-1}(x))}\, =
\frac{\Om_0(\t)}{\chi'_0(\t)}
\]
and the curvature
\[
\hat\s_0(\br(x)) := \s_0(\chi_0^{-1}(x)) = \s_0(\t)
\]
as functions of the Lagrangian coordinate $x$ of material points.

\begin{proposition}  \label{lemma:reg sigma nu (x)}
Under the same assumptions as in Lemma \ref{lemma:bootstrap},
\[
\chi^{-1}_0(x) \in W^{k,\infty}_{2\p},
\quad
\nu(x) \in W^{k-1,\infty}_{2\p},
\quad
\hat\s(\br(x)) 
\in W^{k-1,\infty}_{2\p}.
\]
\end{proposition}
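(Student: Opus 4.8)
The plan is to derive the proposition from Lemma~\ref{lemma:bootstrap} by elementary chain‑rule arguments, using two standard facts about one‑dimensional Sobolev spaces: that $W^{m,\infty}$ of a bounded interval, or of the circle, coincides with $C^{m-1,1}$, the class of $C^{m-1}$ functions whose $(m-1)$th derivative is Lipschitz; and that this class is stable under composition when $m\ge1$, in the precise sense used below. By Lemma~\ref{lemma:bootstrap} we have $\chi_0\in W^{k,\infty}_{2\p}$, $\s_0\in W^{k-1,\infty}_{2\p}$ and $\Om_0/\chi'_0\in W^{k-1,\infty}_{2\p}$; by Lemma~\ref{lemma:chi bound below}, $\chi'_0\ge C>0$, and $\chi'_0$ is also bounded above since $\chi'_0\in L^\infty$. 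As $k\ge2$, $\chi'_0$ is continuous, so after passing to its continuous representative $\chi'_0\ge C>0$ everywhere; thus $\chi_0$ is a $C^{k-1}$‑diffeomorphism of $\R$ with $\chi_0(\t+2\p)=\chi_0(\t)+2\p$, and consequently $\chi_0^{-1}$ is a $C^{k-1}$‑diffeomorphism of $\R$ with $\chi_0^{-1}(x+2\p)=\chi_0^{-1}(x)+2\p$, so $(\chi_0^{-1})'$ is $2\p$‑periodic and $\chi_0^{-1}$ is Lipschitz.

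The first step is to prove $\chi_0^{-1}\in W^{k,\infty}_{2\p}$. Differentiating the identity $\chi_0(\chi_0^{-1}(x))=x$ repeatedly, one sees by induction that for $1\le j\le k-1$ the derivative $(\chi_0^{-1})^{(j)}$ is a quotient whose numerator is a polynomial in the functions $\chi_0^{(i)}\circ\chi_0^{-1}$, $1\le i\le j$, and whose denominator is a positive power of $\chi_0'\circ\chi_0^{-1}$. The denominator is bounded away from $0$ (as $\chi_0'$ is); $\chi_0^{(k-1)}\circ\chi_0^{-1}$ is Lipschitz, being the composition of the Lipschitz function $\chi_0^{(k-1)}$ with the Lipschitz map $\chi_0^{-1}$; and $\chi_0^{(i)}\circ\chi_0^{-1}$ is $C^1$, hence Lipschitz on $[0,2\p]$, for $i\le k-2$. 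Since a quotient of bounded Lipschitz functions whose denominator is bounded away from $0$ is Lipschitz, $(\chi_0^{-1})^{(k-1)}$ is Lipschitz, i.e. $\chi_0^{-1}\in C^{k-1,1}=W^{k,\infty}_{2\p}$.

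The second step is the composition principle: if $g\in W^{m,\infty}$, $h\in W^{m,\infty}$ with the range of $h$ in the domain of $g$, and $m\ge1$, then $g\circ h\in W^{m,\infty}$. Indeed $g\circ h\in C^{m-1}$ by the chain rule, and the Fa\`a di Bruno formula expresses $(g\circ h)^{(m-1)}$ as a polynomial in the terms $g^{(j)}\circ h$ with $j\le m-1$ and $h^{(i)}$ with $i\le m-1$; here every $g^{(j)}\circ h$ is Lipschitz (for $j=m-1$ it is a composition of Lipschitz maps, for $j<m-1$ the composition of a $C^1$ map with the Lipschitz map $h$), every $h^{(i)}$ with $i\le m-1$ is Lipschitz, and a polynomial in bounded Lipschitz functions is Lipschitz. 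Applying this with $m=k-1$, $h=\chi_0^{-1}\in W^{k,\infty}\subset W^{k-1,\infty}$ (by the first step) and $g=\Om_0/\chi'_0\in W^{k-1,\infty}_{2\p}$ gives $\nu(x)=(\Om_0/\chi'_0)\bigl(\chi_0^{-1}(x)\bigr)\in W^{k-1,\infty}_{2\p}$; taking instead $g=\s_0\in W^{k-1,\infty}_{2\p}$ gives $\hat\s(\br(x))=\s_0\bigl(\chi_0^{-1}(x)\bigr)\in W^{k-1,\infty}_{2\p}$. The $2\p$‑periodicity in $x$ of these two functions follows from $\chi_0^{-1}(x+2\p)=\chi_0^{-1}(x)+2\p$ together with the $2\p$‑periodicity of $\Om_0/\chi'_0$ and of $\s_0$.

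The only delicate point is the regularity bookkeeping in the composition step: the outer functions $\Om_0/\chi'_0$ and $\s_0$ lie merely in $W^{k-1,\infty}$, so their $(k-1)$th derivatives are only in $L^\infty$ and need not be continuous; but to conclude that $\nu$ and $\hat\s(\br(x))$ belong to $W^{k-1,\infty}$ one needs only the $(k-2)$th derivative of the composition, and the Fa\`a di Bruno formula for that derivative involves $g^{(j)}$ only for $j\le k-2$, all of which are genuinely Lipschitz. So no regularity is lost, provided one never differentiates $g$ one time too many. The negative powers of $\chi_0'\circ\chi_0^{-1}$ occurring in the first step cause no trouble because $\chi_0'$ is bounded both above and below.
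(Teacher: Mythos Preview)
Your proof is correct and follows essentially the same approach as the paper: use the Fa\`a di Bruno/chain-rule structure of the derivatives of $\chi_0^{-1}$, together with the two-sided bound on $\chi_0'$, to get $\chi_0^{-1}\in W^{k,\infty}_{2\p}$, and then compose with the functions $\Om_0/\chi_0'$ and $\s_0$ supplied by Lemma~\ref{lemma:bootstrap}. Your write-up is in fact more careful than the paper's, since you work with the $C^{m-1,1}$ characterization of $W^{m,\infty}$ rather than asserting directly that the $k$th weak derivative of $\chi_0^{-1}$ lies in $L^\infty$.
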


\begin{proof}
We know that $0 < C \leq \chi'_0(\t) \leq C'$ for all $\t$, for some constants $C,C'$.
The $k$th derivatives of the inverse diffeomorphism $\chi_0^{-1}(x)$ is a finite sum of terms, each of which is a quotient where the numerator is a polynomial in the derivatives of $\chi_0(\t)$ of order $\leq k$, the denominator is an integer power of $\chi'_0(\t)$, and $\t = \chi_0^{-1}(x)$. Hence also
\[
\chi_0^{-1} \in W^{k,\infty}_{2\p}.
\]
Then the proposition follows by \eqref{x=chi(tau)}, \eqref{coord nu}, \eqref{coord sigma}
and Lemma \ref{lemma:bootstrap}.
\end{proof}

\section[The dynamic boundary condition: Riemann-Hilbert theory]{The dynamic boundary condition:\\ Riemann-Hilbert theory}
\label{sec:RH}

We now derive the dynamic boundary condition for the physical
boundary-value problem and prove part ($f$) of Theorem \ref{thm:all}. 
We recall that the pressure at the free surface in terms of the Lagrangian coordinate $x$ of material points in the reference configuration of the surface membrane is given by \eqref{def P}.
When this is rewritten as a function of $\t$, we find the formula
\begin{equation}  \label{pressure in tau}
P(\t) =
\frac{1}{\Om_0} \Big( \frac{(E_{2,0})'}{\Om_0} \Big)' - \s_0 E_{1,0},
\end{equation}
where $\Om_0 = \Om_0(\t)$ etc., and $'$ is, as usual, the derivative
with respect to $\t$. 
However it is not obvious how to deduce the dynamic boundary condition for hydroelastic waves  directly from the existence of a maximizer of $J_0$. In this section we derive it by interpreting the Euler-Lagrange equation \eqref{weak Euler} as a Riemann--Hilbert problem in the manner of  \cite{Sha-Tol}. 
We begin with a special case of a result in \cite{Toland-1998}, and include a short proof for the sake of completeness.

\begin{lemma}  \label{lemma:Riemann-Hilbert}
Suppose that $f(\t) \in L^\b_{2\p}$, $\b>1$, and $a \in \R$. Then
\[
(i) \ \mC (f w_0') + f (1+\mC w_0') \equiv a \qquad \text{iff}
\qquad (ii) \ \Om_0^2 f \equiv a.
\]
\end{lemma}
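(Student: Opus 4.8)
The plan is to turn each side of the equivalence into a statement about boundary values of holomorphic functions on the disc, built from the function $W$ of \eqref{def W} --- for which $W^* = w_0' + i(1+\mC w_0')$, $|W^*| = \Om_0$, and both $W$ and $1/W$ are bounded on $\overline D$ --- together with the Hardy-space facts recalled in the proof of Lemma \ref{lemma:inv mL}. First I would record one elementary remark: a function $g \in L^\b_{2\p}$ with $\b>1$ is the boundary value $\Phi^*$ of some $\Phi \in \mH^\b_\CC$ if and only if $\Im g - \mC(\Re g)$ is a constant, in which case that constant equals $\Im \Phi(0)$; and if such a $g$ happens to be real-valued, then $\Phi$, and hence $g$, is constant. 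Both assertions follow at once from the characterisation $\Phi \in \mH^q_\CC$, $q>1$, $\iff$ $\Phi^* = u + i\mC u + i\alpha$ with $[u] + i\alpha = \Phi(0)$.

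The heart of the argument is then the choice $g := f\,\overline{W^*} = f w_0' - i\,f(1+\mC w_0')$, which lies in $L^\b_{2\p}$ since $\Om_0 = |W^*|$ is bounded and $w_0'$, $\mC w_0'$ are bounded. For this $g$ one computes
\[
\Im g - \mC(\Re g) = -\,f(1+\mC w_0') - \mC(f w_0') = -\big(\mC(f w_0') + f(1+\mC w_0')\big),
\]
so by the preliminary remark statement $(i)$ holds --- with constant $a$ --- precisely when $g = \Phi^*$ for some $\Phi \in \mH^\b_\CC$ with $\Im\Phi(0) = -a$. Granting this, I would multiply by $W$: since $W\in\mH^\infty_\CC$ and $\Phi\in\mH^\b_\CC$, the product $W\Phi$ lies in $\mH^\b_\CC$, and its boundary value is $W^*\, f\,\overline{W^*} = |W^*|^2 f = \Om_0^2 f$, which is real. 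Hence $W\Phi$ is constant, equal to $W(0)\Phi(0) = i\,\Phi(0)$; writing $\Phi(0) = [f w_0'] + i\,\Im\Phi(0) = [f w_0'] - ia$ gives $\Om_0^2 f = i[f w_0'] + a$, and reality of the left-hand side forces $[f w_0'] = 0$ and $\Om_0^2 f \equiv a$, which is $(ii)$.

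The converse is quick: if $(ii)$ holds then $f = a/\Om_0^2$, so $g = f\,\overline{W^*} = a\,\overline{W^*}/|W^*|^2 = a/W^* = (a/W)^*$ is the boundary value of the holomorphic function $a/W$; since $\Im\big((a/W)(0)\big) = \Im(a/i) = -a$, the preliminary remark returns $(i)$ with the correct constant. The step I expect to require the most care is not any single computation but recognising the right object to form --- the combination $f\,\overline{W^*}$, which makes $(i)$ coincide exactly with a holomorphicity condition --- and then keeping the bookkeeping in Hardy classes consistent: the hypothesis $\b>1$ is precisely what licenses the boundary-value description of $\mH^\b_\CC$ and the fact that a real-valued member of $\mH^\b_\CC$ is constant, while the boundedness of $W$ and $1/W$ is what keeps $g$, $a/W$ and $W\Phi$ in the relevant spaces.
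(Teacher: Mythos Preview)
Your proof is correct and follows essentially the same route as the paper's: both identify $g = f\,\overline{W^*}$ as the key object, observe that $(i)$ is exactly the condition that $g$ is the boundary value of some $\Phi\in\mH^\b_\CC$ with $\Im\Phi(0)=-a$, and then multiply by $W$ (respectively divide $a$ by $W$) to pass between $(i)$ and $(ii)$. Your packaging of the Hardy-space characterisation as a preliminary ``iff'' remark is a slightly cleaner way of presenting the same computation the paper carries out by writing down $U$ and $V$ directly.
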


\begin{proof} Recall the notation from the proof of Lemma \ref{lemma:inv
mL}.

$(i) \Rightarrow (ii)$. Consider the holomorphic function $U\in
\mH^\beta_\CC$  such that
\[
U^* = f  w_0'+ i (\mC (fw_0') - a), \quad \Im U(0) = - i a.
\]
By $(i)$,
\[
U^* = f \, \overline{W^*},
\]
therefore, multiplying by $W^*$,
\[
U^* W^* = f |W^*|^2 = \Om_0^2 f.
\]
So the holomorphic function $UW\in \mH^\beta_\CC$ is real on the
unit circle. Then on the whole unit disc $UW \equiv b$ for some real
constant $b$. Since at the origin
\[
U(0) W(0) = ([f w_0'] - i a) i =
a + i [f w_0'],
\]
it follows that $b = a$ and $[f w_0']=0$. In particular, on the unit circle
$
a \equiv U^* W^* \equiv \Om_0^2 f.
$

$(ii) \Rightarrow (i)$.
We consider the holomorphic function
\[
V := \frac{a}{W}\in \mH^\infty_\CC\,,
\]
and denote $v := \Re V^*$.
Since $V(0) = a / W(0) = -i a$, on the unit circle
\[
V^* = v + i (-a + \mC v).
\]
On the other hand, $(ii)$ implies that
\[
V^* = \frac{a}{W^*}\, = \frac{a \overline{W^*} }{ |W^*|^2} \,
= f \overline{W^*}
= f w_0' - i f (1 + \mC w_0').
\]
Then
\[
f w_0' = v,
\quad
- f (1 + \mC w_0') = - a + \mC v ,
\]
and $(i)$ follows.
\end{proof}

Let
\[
f := \frac{c^2}{2}\, - g (a_0 + w_0) - P,
\]
where $P$ is defined in \eqref{pressure in tau},
and note that $f \in L^\b_{2\p}$ for all $\b \in (1,\infty)$, by the
regularity results we have proved in the previous section.

By a simple calculation using \eqref{grad I0}, we see that
\begin{equation} \label{dynamics}
\mC (f w_0') + f (1+\mC w_0') \equiv \frac{c^2}{2}
\end{equation}
if and only if
\begin{equation}  \label{claim RH}
 \gr I_0 - g a_0= \mC (w_0' P) + P (1+\mC w_0').
\end{equation}
We now prove that \eqref{claim RH} follows from the two Euler-Lagrange
equations and the regularity results which lead to \eqref{old Q trick}.
After integrating the Euler equation \eqref{weak Euler} by
parts, we get
\begin{align*}
0&=\intp \gr I_0\, h \,d\t + \intp \{ \mC (E_{2,0})' - E_{1,0} \Om_0
\} \, \mL(h')\,d\t
\\
&=\intp \gr I_0\, h \,d\t
- \intp \big\{ \mL^* \big( \mC (E_{2,0})' - E_{1,0} \Om_0 \big)
\big\}'\, h\,d\t
\quad \forall h \in H^{2,\rho}_0,
\end{align*}
where $\mL^*$ is the adjoint operator of $\mL$.
Hence
\[
\gr I_0 - \big\{ \mL^* \big( \mC (E_{2,0})' - E_{1,0} \Om_0 \big) \big\}' \equiv \text{const.},
\]
therefore
\[
\gr I_0 -g a_0 = \big\{ \mL^* \big( \mC (E_{2,0})' -
E_{1,0} \Om_0 \big) \big\}'
\]
because
\[
[\gr I_0] = - g [w_0 \mC w_0'] = g a_0.
\]
Thus, \eqref{claim RH} can be written as
\begin{equation}  \label{claim RH 2}
\{ \mL^* ( \mC T - Q) \}'
- \mC (w_0' P) - P(1+\mC w_0') = 0,
\end{equation}
where, for convenience, we let
\[
T := (E_{2,0})'\in W^{1,\beta}_{2\p}, \quad Q := E_{1,0} \Om_0\in
W^{2,\beta}_{2\p} .
\]
To calculate the left-hand term in \eqref{claim RH 2},
we use the formula
\[
\mL^*u = \frac{w_0' u}{\Om_0^2} - 
\mC \Big( \frac{(1+\mC w_0') u}{\Om_0^2}\Big)
\]
and the equalities
\begin{align*}
\Big( \frac{w_0'}{\Om_0^2} \Big)'
& =
\frac{1+\mC w_0'}{\Om_0^2} \, \Th'_0 -
\frac{w_0'}{\Om_0^2} \, \mC \Th'_0,
\\[5pt]
\Big( \frac{1+\mC w_0'}{\Om_0^2} \Big)' & = - \frac{w_0'}{\Om_0^2}
\, \Th'_0 - \frac{1+\mC w_0'}{\Om_0^2} \, \mC \Th'_0 .
\end{align*}
These follow from the fact that
\[
w_0' = \Om_0 \sin \Th_0, \quad
1 + \mC w_0' = \Om_0 \cos \Th_0, \quad
\Om'_0 / \Om_0 = \mC \Th'_0,
\]
the identity
\[
T'
= \Om_0 \Big( \frac{T}{\Om_0} \Big)'
+ T \, \mC \Th'_0
\]
and the formula
\[
Q' = Q \, \mC \Th'_0 - T \, \Th'_0.
\]
This formula follows from \eqref{old Q trick}, which was obtained by differentiating the Euler equation \eqref{Euler chi}, using the regularity already proved. 
In this way \eqref{claim RH 2}
can be written explicitly, in terms of $\Th'_0, Q, T$ and their
Hilbert transform, as
\begin{align}  \label{monster 20}
& \Big(
\frac{1+\mC w_0'}{\Om_0^2} \, \Th'_0
- \frac{w_0'}{\Om_0^2} \, \mC \Th'_0
\Big)
( \mC T - Q)
 + \mC \Big\{
\Big(
\frac{w_0'}{\Om_0^2} \, \Th'_0
+ \frac{1+\mC w_0'}{\Om_0^2} \, \mC \Th'_0
\Big)
( \mC T - Q)
\Big\}
\notag
\\[4pt]
&  + \ \frac{w_0'}{\Om_0^2} \,
\mC \Big\{
\Om_0 \Big( \frac{T}{\Om_0} \Big)'
+ T \mC \Th'_0 \Big\}
- \ \frac{w_0'}{\Om_0^2} \,
( Q \, \mC \Th'_0 - T \Th'_0 )
\\[4pt]
&  - \ \mC \Big\{
\frac{1+\mC w_0'}{\Om_0^2} \,
\Big(
\mC \Big\{
\Om_0 \Big( \frac{T}{\Om_0} \Big)'
+ T \, \mC \Th'_0
\Big\}
- Q \, \mC \Th'_0 + T \Th'_0 \Big) \Big\}
\notag
\\[4pt]
&  - \ \mC \Big\{
\frac{w_0'}{\Om_0^2} \,
\Big(
\Om_0 \Big( \frac{T}{\Om_0} \Big)' - Q \Th_0'
\Big) \Big\}
- \ \frac{1+\mC w_0'}{\Om_0^2}\,
\Big\{
\Om_0 \Big( \frac{T}{\Om_0} \Big)' - Q \Th_0'
\Big\}
 \, = 0.
\notag
\end{align}
The four terms in \eqref{monster 20} 
involving $u := \Om_0 (T / \Om_0 )'\in L^\beta_{2\p}$ cancel because
\begin{multline}   \label{quotient trick}
\frac{w_0' \, \mC u - (1 + \mC w_0') \, u}{\Om_0^2} \,
- \mC \Big( \frac{w_0' u + (1 + \mC w_0') \, \mC u}{\Om_0^2} \Big)
\\
= \Im \Big( \frac{U^*}{W^*} \Big)
- \mC \, \Re \Big( \frac{U^*}{W^*} \Big)
= 0,
\end{multline}
where $U(z)$ is the holomorphic function of the unit disc such that 
$U^* = u + i \mC u$, and $W$ has been defined in \eqref{def W}.
The eight terms involving $Q$ simply cancel by pairs. 
Now we note that
\begin{equation}  \label{product trick}
\Th_0' T - (\mC \Th_0')(\mC T) + \mC ( T \, \mC \Th_0' )
= \, - \mC ( \Th_0' \, \mC T ),
\end{equation}
because
\[
- \Re \{ (\Th_0' + i \mC \Th_0')(T + i \mC T) \}
= \mC \, \Im \{ (\Th_0' + i \mC \Th_0')(T + i \mC T) \} .
\]
Using \eqref{product trick}, the eight terms involving $T$ cancel because
\[
- \, \frac{w_0' \, \mC \xi - (1 + \mC w_0') \, \xi}{\Om_0^2} \,
+ \mC \Big( \frac{w_0' \xi + (1 + \mC w_0') \, \mC \xi}{\Om_0^2} \Big) = 0,
\]
with $\xi := \Th_0' \, \mC T$, 
for the same reason as in \eqref{quotient trick}.
Hence \eqref{monster 20} holds. This implies \eqref{dynamics} and so,
by Lemma \ref{lemma:Riemann-Hilbert},
\begin{equation}  \label{dyn final}
 1 - \frac{2g}{ c^2}\,(a_0 + w_0) - \frac{2}{c^2}\, P = \frac{1}{\Om_0^2}\,.
\end{equation}
We have showed that, if $(w_0,\chi_0)$ satisfies the Euler equations \eqref{Euler chi} and \eqref{Euler}, then it solves \eqref{dyn final}, proving part ($f$) of Theorem \ref{thm:all}.

Now, the maximum $(w_0,\chi_0)$ for $J_0$ corresponds to the maximum $(w^*_0,\chi_0)$ for $J$, where
\[
w^*_0 := a_0 + w_0
\]
(recall \eqref{conservation of mass} and definition \eqref{def alpha 0}).
Then \eqref{dyn final} writes
\begin{equation}  \label{dyn nonzero mean}
 1 - \frac{2}{c^2}\, ( g w^*_0 + P)  =
\frac{1}{\Om_0^2}\,.
\end{equation}
Note that \eqref{dyn nonzero mean} is the same equation as in
\cite{Toland-steady}, par.\,7.4 
(in \cite{Toland-steady} the pressure was denoted by $-\mF$). 
By Lemma \ref{lemma:bootstrap},
$\Om_0 \in W^{k,\b}_{2\p}$ for all $\b \in (1,\infty)$. So
\[
\frac{1}{\Om_0^2}\, \in W^{k,\b}_{2\p} \quad \forall \b \in (1,\infty),
\]
because $\Om_0 \geq C > 0$ for some $C$.
Hence, by \eqref{dyn nonzero mean},
\[
P = \frac{c^2}{2}\, - g w_0^* - \frac{c^2}{2} \, \frac{1}{\Om_0^2}
\ \in W^{k,\b}_{2\p}
\]
for all $\b \in (1,\infty)$. In particular, in the case when $E$ is
of class $C^2$, the pressure belongs to $W^{2,\b}_{2\p} \subset
W^{1,\infty}_{2\p}$. The same holds for the pressure $P(\br(x))$
(see \eqref{def P}) as a function of the Lagrangian coordinate $x$.

\begin{lemma}  \label{lemma:maths => phys}
The solution $(w_0,\chi_0)$ of the variational problem described in Theorem \ref{thm:all} gives a solution of the physical problem \eqref{classical problem}, where the free boundary $\mS$ and the material deformation of the membrane $\br(x)$ are
\[
\mS = \mS(w_0) := \{ \rho(w_0)(\t) : \t \in \R \},
\quad \br(x) = \rho(w_0) (\chi_0^{-1}(x)).
\]
\end{lemma}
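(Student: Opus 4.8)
The plan is to read the conclusion off by assembling the facts established in Sections~\ref{sec:existence}--\ref{sec:RH}. I would take $\mS := \mS(w_0^*)$ and $\br(x) := \rho(w_0^*)(\chi_0^{-1}(x))$, where $w_0^* := a_0 + w_0$ is the physical elevation fixed by the mass normalisation \eqref{conservation of mass}--\eqref{def alpha 0}; since the shape quantities $\Om(w_0^*) = \Om(w_0) = \Om_0$, $\s(w_0^*) = \s_0$ and $\Th(w_0^*) = \Th_0$ depend only on $w_0$, this is the curve $\mS(w_0)$ of the statement translated vertically, and at the point $\rho(w_0^*)(\t)$ one has $Y = w_0^*(\t)$. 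It then remains to verify the clauses of \eqref{classical problem} one by one.

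First I would check that $\mS$ is an admissible wave profile. Applying Lemma~\ref{lemma:compactness} to the maximiser (for which $J_0 > 0$) bounds the total oscillation of $\Th_0$ by $\mu^*\p < \p$; since $\rho(w_0^*)'(\t) = \Om_0(\t)\bigl(-\cos\Th_0(\t),\,\sin\Th_0(\t)\bigr)$ with $\Om_0 \geq C > 0$, the unit tangent of $\mS$ at $\rho(w_0^*)(\t)$ points in the direction $\p - \Th_0(\t)$, which ranges over an arc of length less than $\p$. Hence there is a fixed unit vector $\mathbf n$ with $\rho(w_0^*)'(\t)\cdot\mathbf n > 0$ for every $\t$, so $\mS$ is strictly monotone along $\mathbf n$, hence non-self-intersecting, while its increment over one period, $\rho(w_0^*)(\t+2\p) - \rho(w_0^*)(\t) = (-2\p,0)$, confirms $2\p$-periodicity in $X$ and, having positive $\mathbf n$-component, rules out accidental closure. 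For regularity I would invoke Lemma~\ref{lemma:bootstrap}: when $E \in C^k$, $k \geq 2$, it gives $\s_0 \in W^{k-1,\infty}_{2\p}$, $\Om_0 \in W^{k-1,\infty}_{2\p}$ (with $\Om_0$ bounded below) and $w_0 \in W^{k+1,\b}_{2\p}$ for all finite $\b$, whence $\mS$ is (intrinsically) of class $W^{k+1,\infty}$ and, with Proposition~\ref{lemma:reg sigma nu (x)} ($\chi_0^{-1} \in W^{k,\infty}$), $\br \in W^{k,\infty}$; in particular $\mS \in W^{3,\infty}$, $\br \in W^{2,\infty}$ when $E \in C^2$. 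Finally $\{\br(x):x\in\R\} = \{\rho(w_0^*)(\t):\t\in\R\} = \mS$, and since $\chi_0^{-1}$ maps $[0,2\p]$ onto $[0,2\p]$, the constraint \eqref{r constrain} holds with $x_0 = 0$.

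Next I would produce the stream function and the relation between its gradient on $\mS$ and $\Om_0$. Since $\mS$ is a smooth, non-self-intersecting, $2\p$-periodic curve, classical potential theory furnishes a unique bounded $\psi$ solving (\ref{classical problem}a,b,c): write $\psi = cY + \tilde\psi$ with $\tilde\psi$ harmonic below $\mS$, $\tilde\psi = -cY$ on $\mS$, $\gr\tilde\psi \to 0$ as $Y \to -\infty$. The form I actually need is the conformal one used in Section~\ref{sec:Lagrangian}: the fluid domain below one period of $\mS$ is conformally uniformised so that its boundary is parametrised by $\t \mapsto (-\t - \mC w_0(\t),\,w_0^*(\t))$ with conformal factor $\Om_0(\t)$ on the boundary; as in \cite{Zakharov et al, Sha-Tol, Toland-steady} this yields, for a.e.\ $\t$, the identity $|\gr\psi|^2\bigl(\rho(w_0^*)(\t)\bigr) = c^2/\Om_0(\t)^2$ --- the same identity behind the formula $K = \frac{c^2}{2}\intp w\,\mC w'\,d\t$ of that section.

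The last step is the dynamic condition. Rewriting the pressure formula \eqref{def P} in the variable $\t$ by means of \eqref{coord nu}, \eqref{coord sigma} and $x = \chi_0(\t)$ reproduces \eqref{pressure in tau}, and the balance relation \eqref{palp} --- which in the variable $\t$ reads \eqref{old Q trick} --- follows by differentiating the Euler equation \eqref{Euler chi} (justified by the regularity of Lemma~\ref{lemma:bootstrap}); so $P(\br(x))$ is indeed given by \eqref{def P}. By part~$(f)$ of Theorem~\ref{thm:all} the maximiser satisfies \eqref{dyn nonzero mean}, i.e.\ $\Om_0^{-2} = 1 - \frac{2}{c^2}\bigl(g\,w_0^* + P\bigr)$; multiplying by $c^2/2$ and substituting $\frac{c^2}{2}\Om_0^{-2} = \frac12|\gr\psi|^2$ and $w_0^* = Y$ on $\mS$ converts this into $\frac12|\gr\psi|^2 + gY = \frac{c^2}{2} - P$ on $\mS$, which is \eqref{psi dynamics}. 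Hence $(\mS,\psi,\br)$ solves \eqref{classical problem}. The one point that is not pure bookkeeping is the passage, in this last step, from the variational identity \eqref{dyn nonzero mean} to the pointwise hydrodynamic Bernoulli law: it needs both that $\mS$ be genuinely embedded --- so that ``the domain below $\mS$'' and its conformal uniformisation are meaningful --- and the boundary identity $|\gr\psi| = c/\Om_0$, and these are exactly the facts imported from the conformal-mapping formulation of \cite{Zakharov et al, Sha-Tol, Toland-steady}; once they are in hand, the rest is collecting the regularity and Euler-equation statements already proved.
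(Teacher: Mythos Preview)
Your proposal is correct and follows essentially the same route as the paper: both identify $Y=w_0^*$ on $\mS$, invoke the conformal-mapping identity $|\gr\psi|^2=c^2/\Om_0^2$ from \cite{Sha-Tol}, and read off \eqref{psi dynamics} directly from \eqref{dyn nonzero mean}. The paper's version is considerably terser --- it simply cites \cite{Sha-Tol} for the stream-function identity and asserts the constraint \eqref{r constrain} by construction --- whereas you additionally spell out the non-self-intersection of $\mS$ via Lemma~\ref{lemma:compactness} and the regularity via Lemma~\ref{lemma:bootstrap} and Proposition~\ref{lemma:reg sigma nu (x)}; these are genuine prerequisites for the cited conformal argument and for Theorem~\ref{thm:physical}, so the extra detail is appropriate rather than redundant.
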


\begin{proof}
It was proved in \cite{Sha-Tol} that, given the curve $\mS(w)$
parametrized by \eqref{curve parametrized}, the solution $\widetilde
\psi$ of the problem
\begin{align*}
\D \widetilde \psi = 0 & \quad \text{below } \mS(w), \\
\widetilde \psi = 0 & \quad \text{on } \mS(w), \\
\gr \widetilde \psi \to (0,1) & \quad \text{as } Y \to -\infty
\end{align*}
satisfies
\[
|\gr \widetilde \psi |^2 = \frac{1}{\Om(w)^2} 
\quad \text{on }\mS(w).
\]
Now, we consider the curve $\mS = \mS(w_0)$, where $(w_0,\chi_0)$ is
the solution of the variational problem  in Theorem \ref{thm:all}.
The solution $\psi$ of problem (\ref{classical problem}a,b,c) is
$\psi = c \widetilde \psi$, therefore
\[
|\gr \psi |^2 = \frac{c^2}{\Om(w)^2} \quad \text{on }\mS(w).
\]
By construction, $w^*_0(\t)$ is the elevation of the point $\rho(w)(\t) = \br(x)$ of the membrane, that is the vertical coordinate of the deformed point $\br(x)$ with respect to the rest frame, and $P(\t)$ is the pressure at the point $\rho(w)(\t) = \br(x)$.
Then \eqref{dyn nonzero mean} is exactly \eqref{psi dynamics}.

Note that the constraint \eqref{r constrain} is satisfied by $(w_0,\chi_0)$ by construction.
\end{proof}

\begin{small}

\end{small}

\bigskip

\noindent
Department of Mathematical Sciences,\\
University of Bath,\\
Bath BA2 7AY, U.K. 

\smallskip
 
\noindent
\emph{E-mail:} \texttt{P.Baldi@bath.ac.uk,\, jft@maths.bath.ac.uk}

\end{document}